\newtheorem{theo}{Theorem}[section]
\newtheorem{prop}[theo]{Proposition}
\newtheorem{claim}[theo]{Claim}
\newtheorem{lemm}[theo]{Lemma}
\newtheorem{coro}[theo]{Corollary}
\newtheorem{rema}[theo]{Remark}
\newtheorem{Defi}[theo]{Definition}
\newtheorem{conj}[theo]{Conjecture}
\newtheorem{question}[theo]{Question}
\newtheorem{example}[theo]{Example}
\title{Chow rings  and gonality of general abelian varieties}
\author{Claire Voisin
\\Coll\`ege de France}
\date{}
\newfont{\gothic}{eufb10}
\begin{document}
\maketitle
\setcounter{section}{-1}

\begin{abstract} We study the (covering) gonality of  abelian varieties and their orbits of zero-cycles for rational equivalence. We show that any orbit for rational equivalence of zero-cycles of degree $k$ has dimension at most $k-1$. Building on the work of Pirola, we show that
very general abelian varieties of dimension $g$ have covering gonality $k\geq f(g)$   where $f(g)$ grows like  ${\rm log}\,g$. This answers   a question asked by Bastianelli,  De Poi,  Ein,  Lazarsfeld and B. Ullery. We also obtain results on the Chow ring of very general abelian varieties, eg. if $g\geq 2k-1$, for any divisor $D\in {\rm Pic}^0(A)$,  $D^k$ is not a torsion cycle.
\end{abstract}
\section{Introduction}
The gonality of a projective variety $X$  is defined in this paper as the minimal gonality of the normalization
of an irreducible  curve $C\subset X$. In the case of an abelian variety, the gonality is the same as the covering gonality studied in \cite{lazarsfeld}.
One of the main results  of this paper  answers affirmatively a question asked in \cite{lazarsfeld} concerning the gonality of a very general abelian variety $A$, namely, whether it grows to infinity with $g={\rm dim}\,A$.
\begin{theo} \label{theogon} Let $A$ be a very general abelian variety of dimension $g$. Then if $g\geq 2^{k-2}(2k-1)+(2^{k-2}-1)(k-2)$, the gonality of $A$ is at least $k+1$.
\end{theo}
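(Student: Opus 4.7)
The plan is to argue by induction on $k$. The base case is $k=2$, $g\geq 3$: by a theorem of Pirola, a very general abelian variety of dimension $\geq 3$ contains no hyperelliptic curve, so its gonality is at least $3$. The shape of the bound $3(k-1)2^{k-2}-(k-2)$, which roughly doubles as $k$ increases by one, indicates that the inductive step should reduce the existence of a curve of gonality $\leq k+1$ in a very general $A$ of dimension $g$ to the existence of a curve of gonality $\leq k$ in a very general abelian variety of dimension roughly $g/2$.

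For the inductive step, I would suppose for contradiction that a very general abelian variety $A$ of dimension $g$ (at least the bound for $k+1$) contains a curve $C$ whose normalization $\widetilde{C}$ admits a degree-$(k+1)$ morphism $\phi\colon \widetilde{C}\to \mathbb{P}^1$. By translation in $A$, this yields a $g$-dimensional family of effective degree-$(k+1)$ zero-cycles $z_{b,t}:=b+\phi^{-1}(t)$ on $A$, parametrized by $(b,t)\in A\times \mathbb{P}^1$, whose members $\{z_{b,t}\}_{t\in \mathbb{P}^1}$ are rationally equivalent on $A$ for each fixed $b$, and whose supports cover $A$.

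The key step is to extract from this family a covering of a quotient abelian variety $A'$ of $A$ by curves of gonality $\leq k$, with $\dim A'$ still large enough to contradict the inductive hypothesis. Following Pirola, one decomposes the fibers $\phi^{-1}(t)$ into effective sub-cycles of degrees approximately $(k+1)/2$, and analyses the resulting families via Pirola's infinitesimal criterion involving holomorphic $1$-forms on $A$ and the Gauss map of $C$. The rational equivalence of the $z_{b,t}$ imposes linear relations on the restrictions of $1$-forms to $\widetilde{C}$; since for very general $A$ these restrictions are as independent as possible, one is forced to pass to a quotient $A'$ of dimension at least roughly half that of $A$ (up to lower-order error terms) on which the sub-cycle family is itself rationally equivalent, producing the desired covering of $A'$ by curves of gonality $\leq k$.

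The main technical obstacle is making the halving step rigorous and tracking the dimension loss precisely: one must argue that the new family on $A'$ actually covers $A'$, corresponds to curves of the claimed gonality, and that $A'$ is itself very general in its moduli so that the inductive hypothesis applies. Solving the resulting recurrence $g(k+1)\geq 2g(k)+3\cdot 2^{k-1}+k-3$ with base value $g(2)=3$ then yields the stated bound $g(k)=3(k-1)2^{k-2}-(k-2)$.
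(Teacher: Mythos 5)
Your base case is fine (it is Theorem \ref{theopirola}), and you have correctly observed that the bound $2^{k-2}(2k-1)+(2^{k-2}-1)(k-2)=3(k-1)2^{k-2}-(k-2)$ roughly doubles at each step, but the inductive step you propose has two concrete gaps that I do not see how to repair. First, a very general abelian variety is simple, so it admits no surjection onto an abelian variety $A'$ with $0<\dim A'<\dim A$: the quotient you want to ``pass to'' does not exist. The actual mechanism (Pirola's, generalized here as Theorem \ref{thegenpirola}) is to specialize $A$ inside its moduli space to the dense countable union of loci where $A$ is isogenous to $B\times E$, project to $B$ there, and then propagate the conclusion back to the very general point; this forces one to work with loci in $A$ that are defined uniformly in families (naturally defined subsets in the sense of Definition \ref{definat}) and to control their dimension, rather than with an individual curve on an individual $A$. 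Second, the halving step itself is unsubstantiated: there is no reason that subdivisors of the fibers $\phi^{-1}(t)$ of degree about $(k+1)/2$ move in a rationally equivalent family on any abelian variety, nor that they sweep out a curve of gonality $\le k$ (note the mismatch: you halve the degree of the cycle but only drop the gonality by one). Relatedly, your recurrence $g(k+1)\ge 2g(k)+3\cdot 2^{k-1}+k-3$ is reverse-engineered from the answer --- nothing in your sketch produces the additive term $3\cdot 2^{k-1}$.

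The paper's route is genuinely different and does not induct on the gonality at all. A $k$-gonal curve in $A$, translated so that a point $c$ with $h^0(\widetilde{C},D(-2c))\neq 0$ maps to $0_A$, produces a positive dimensional orbit for rational equivalence of the form $|Z'+2\{0_A\}|$ in $A^{(k)}$ with $Z'$ effective of degree $\le k-2$; so it suffices to rule out such orbits, which is Theorem \ref{coroorbit}, (iii). That statement is proved by induction on the number $l$ of ``free'' points in a cycle $(k-l)\{0_A\}+Z$ (Proposition \ref{proinduction}): the case $l=0$ combines the Pontryagin-product computation on the Chow ring (or the dimension bound of Theorem \ref{ledimorbit}) with Theorem \ref{thegenpirola}, and each increment of $l$ costs roughly a factor of $2$ in the dimension bound because Theorem \ref{thegenpirola} converts ``proper for dimension $g_0$'' into ``countable for dimension $\ge 2g_0-1$'' by dropping the dimension of a naturally defined subset one unit at a time as $g$ grows (Proposition \ref{propourtheogenpirola}). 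The doubling in the final bound is thus the accumulation of $g_0-1$ unit specialization steps, not a single geometric halving. The piece your outline is missing is precisely this specialization machinery together with the Chow-theoretic dimension estimates; without them the inductive step has no content.
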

In other words, a very general abelian variety of dimension $\geq 2^{k-2}(2k-1)+(2^{k-2}-1)(k-2)$ does not contain a curve
 of gonality $\leq k$.
  This theorem is presumably not optimal. What seems reasonable is the following bound:
  \begin{conj} \label{conj} Let $A$ be a very general abelian variety of dimension $g$. Then
  if $g\geq 2k-1$, the gonality of $A$ is at least $k+1$.
  \end{conj}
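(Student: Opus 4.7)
The plan is to combine the orbit-dimension theorem from the abstract with the Chow-theoretic non-vanishing of $D^k$ at the threshold $g\geq 2k-1$. Assume for contradiction that $A$ has gonality $\leq k$ while $g\geq 2k-1$. Then through a very general point of $A$ passes an irreducible curve $C\subset A$ whose normalization $\widetilde C$ admits a pencil $f:\widetilde C\to\mathbb{P}^1$ of degree $\leq k$. Letting $C$ vary in an irreducible family $\{C_b\}_{b\in B}$ covering $A$ and organizing the pencil fibers into a map $\Phi:B\times\mathbb{P}^1\to A^{(k)}$, $\Phi(b,t)=f_b^{-1}(t)$, one obtains, for each $b$, a rational curve in $A^{(k)}$ contained in a single orbit under rational equivalence.

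The orbit theorem bounds each such orbit by dimension $\leq k-1$, so the covering hypothesis forces $\dim B\geq g-1$ and the total image $\Phi(B\times\mathbb{P}^1)$ must sweep out at least $g$ distinct directions in $A^{(k)}$ modulo orbits. Following Pirola, I would translate this into an infinitesimal statement at a general cycle $z=p_1+\dots+p_k$: the tangent space to the orbit identifies with tuples $(v_1,\dots,v_k)\in\bigoplus_i T_{p_i}A$ annihilated by every $\omega\in H^0(A,\Omega^1_A)$ whose pullback to $\widetilde C$ has zero trace along $f$. Denoting by $W\subset H^0(A,\Omega^1_A)$ this ``trace-zero'' subspace, one obtains a linear-algebraic inequality relating $\dim W$, the gonality $k$, and $g$.

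The key new input would be the paper's Chow-theoretic result: for $g\geq 2k-1$ and any $D\in\mathrm{Pic}^0(A)$, $D^k$ is non-torsion in $\mathrm{CH}^k(A)_{\mathbb{Q}}$. I would use this to rule out a sufficiently large $W$: via the trace identities on $f$, non-triviality of the class $D^k$ after pullback should obstruct the trace-zero condition holding for too many forms simultaneously. Making this precise through an analysis of the multiplication map $\mathrm{Sym}^k H^0(A,\Omega^1_A)\to H^0(\widetilde C,K_{\widetilde C}^{\otimes k})$ on pencil data would be the technical heart of the argument.

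The main obstacle is the sharpness of the bound $2k-1$. Pirola's method typically proceeds inductively via successive difference varieties $C-C$, $C-C-C$, and so on, losing a factor of $2$ at each of the $k-1$ inductive steps and producing the exponential bound of Theorem \ref{theogon}. Reaching the linear bound $2k-1$ seems to require a single global argument using all $k$-fold products of $H^0(A,\Omega^1_A)$ simultaneously, matching exactly the threshold at which $D^k$ becomes non-torsion; whether Pirola-type infinitesimal techniques can be upgraded to this level, or whether a genuinely new construction relating covering gonality directly to the coniveau filtration on $\mathrm{CH}^g(A)$ is required, is precisely what makes the conjecture open.
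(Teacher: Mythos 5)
This statement is Conjecture \ref{conj} of the paper: it is explicitly left open, and the paper only proves the much weaker Theorem \ref{theogon}, in which $g$ must grow exponentially in $k$. Your proposal is therefore not being measured against an existing proof, and indeed it is not a proof: you correctly concede in your last paragraph that the passage from the exponential bound to the linear bound $g\geq 2k-1$ is exactly the open point. To be concrete about where your sketch stops being an argument: the Chow-theoretic input you invoke (countability of the set of $D\in{\rm Pic}^0(A)$ with $D^k=0$, equivalently of $A_k$, for $g\geq 2k-1$; Theorem \ref{theochowring}) controls, via Proposition \ref{theorelation}, only the single orbit $|k\{0_A\}|$, i.e.\ it yields Theorem \ref{coroorbit}, (iv). A $k$-gonal curve produces a positive dimensional orbit $|Z'+2\{0_A\}|$ with $Z'$ effective of degree $\leq k-2$, not an orbit of $k\{0_A\}$ itself, and the paper must run $k-2$ further induction steps (Proposition \ref{proinduction}), each of which roughly doubles the required dimension through Theorem \ref{thegenpirola}; this is the source of the exponential loss. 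Your proposed ``single global argument'' via the multiplication map ${\rm Sym}^kH^0(A,\Omega^1_A)\to H^0(\widetilde C,K_{\widetilde C}^{\otimes k})$ is not carried out, and no mechanism is given by which the nonvanishing of $D^k$ would bound the trace-zero subspace $W$.

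For comparison, the strategy the paper itself proposes in Section \ref{secdiscussion} is different and cleaner: normalize the orbit by the condition $\sigma(Z)=0$, consider the locus $Z_A$ swept out by positive dimensional normalized orbits of degree $k$, and conjecture (Conjecture \ref{conjchow}) that ${\rm dim}\,Z_A\leq k-1$ for $A$ very general. Granting this, a single application of Theorem \ref{thegenpirola}, (ii) gives emptiness of $Z_A$ for $g\geq 2k-1$, hence Conjecture \ref{conj}, with no induction and no exponential loss. The paper verifies Conjecture \ref{conjchow} only for $k=2$ (via the Kummer variety and its generically nondegenerate $2$-form), and gives an example showing that without the very-generality hypothesis the locus $Z_A$ can be all of $A$, so the dimension bound cannot follow from a purely infinitesimal statement valid for every abelian variety. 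This last point is a concrete obstruction to the kind of uniform linear-algebra argument at a general cycle that your second paragraph envisages.
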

  We will discuss in Section \ref{secdiscussion} a strategy  towards proving this statement and some evidence  for it.
  Theorem \ref{theogon}  generalizes the following result by Pirola \cite{pirola}:
  \begin{theo}\label{theopirola}(Pirola) A very general abelian variety
of dimension at least $3$ does not contain a hyperelliptic curve.
  \end{theo}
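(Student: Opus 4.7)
The plan is to argue by contradiction via an infinitesimal variation of Hodge structure argument, in the spirit of Pirola. Assume a very general abelian variety of dimension $g \geq 3$ contains a hyperelliptic curve. Then, after covering countably many components and fixing the geometric genus $g_C \geq 2$ of the hyperelliptic curve, one extracts a smooth family $\phi : \mathcal{C} \to \mathcal{A}$ over an irreducible base $S$, where $\mathcal{C}/S$ is a family of smooth hyperelliptic curves of genus $g_C$, $\mathcal{A}/S$ is a family of principally polarized abelian varieties of dimension $g$, and the moduli map $S \to \mathcal{A}_g$ is dominant.

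The pointwise starting input is that the unique $g^1_2$ on $C_s$ gives a $\mathbb{P}^1$-family of rationally equivalent divisors $p + \iota_s(p)$, where $\iota_s$ is the hyperelliptic involution. Applying $\phi_s$ yields a morphism $\mathbb{P}^1 \to A_s$, necessarily constant since $A_s$ contains no rational curves, so
\[
\phi_s \circ \iota_s = [-1]_{A_s} \circ \phi_s + a_s
\]
for some translation $a_s \in A_s$. In particular $\phi_s^* : H^0(\Omega^1_{A_s}) \to H^0(\Omega^1_{C_s})$ lands in the $(-1)$-eigenspace of $\iota_s^*$; but on a hyperelliptic curve this eigenspace equals all of $H^0(\Omega^1_{C_s})$, so no obstruction appears at the pointwise level, and the variational direction is essential.

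I would then analyze the Kodaira--Spencer map $\kappa : T_s S \to H^1(A_s, T_{A_s}) \cong \mathrm{Sym}^2 H^0(\Omega^1_{A_s})^{*}$, where the principal polarization is used to identify $H^1(\mathcal{O}_{A_s}) \cong H^0(\Omega^1_{A_s})^{*}$. Because $\phi$ extends in the family, one can represent the first-order deformation of $\phi_s$ by an infinitesimal vector field $v_s \in H^0(C_s, \phi_s^* T_{A_s})$, and $\kappa(\partial)$ pulls back to $0$ in $H^1(C_s, \phi_s^* T_{A_s})$. Differentiating the relation $\phi_s \iota_s = [-1]\phi_s + a_s$ along a tangent vector $\partial \in T_s S$ gives an equivariance $\iota_s^* v_s = - v_s$ modulo translations and first-order variations of $\iota_s$ and $a_s$. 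Pairing this anti-equivariance with symmetric $2$-tensors $\phi_s^*\alpha \cdot \phi_s^*\beta$, for $\alpha,\beta \in H^0(\Omega^1_{A_s})$, transfers into nontrivial linear conditions on $\kappa(\partial)$ in $\mathrm{Sym}^2 H^0(\Omega^1_{A_s})^{*}$. For $g \geq 3$ these conditions cut out a proper subspace, contradicting the surjectivity of $\kappa$ that is forced by the dominance of $S \to \mathcal{A}_g$.

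The main obstacle is making the third paragraph precise: identifying explicitly the proper subspace of $\mathrm{Sym}^2 H^0(\Omega^1_{A_s})^{*}$ cut out by the constraints, and verifying that the infinitesimal invariant of the normal function attached to the relative $0$-cycle $Z_s = \phi_s(p) + \phi_s(\iota_s p) - a_s$ really produces a codimension-$\geq 1$ condition as soon as $g \geq 3$. This requires careful bookkeeping of how $\iota_s$ and $a_s$ vary with $s$, together with the theory of infinitesimal invariants of normal functions, and is the technical heart of Pirola's argument; the dimension count $g(g+1)/2 \geq 6$ for $g \geq 3$ is what ultimately allows the proper-subspace conclusion to contradict surjectivity of $\kappa$.
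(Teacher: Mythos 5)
Your set-up through the second paragraph is fine: the relation $\phi_s\circ\iota_s=[-1]_{A_s}\circ\phi_s+a_s$ is indeed the starting point, and you correctly observe that the pointwise obstruction vanishes because $\iota_s^*=-\mathrm{id}$ on $H^0(\Omega^1_{C_s})$. But there is a genuine gap at exactly the step you yourself flag and defer. The first-order conditions you can extract from the anti-equivariance of $v_s$ are conditions on $\kappa(\partial)$ coming from the kernel of the multiplication map $\mathrm{Sym}^2H^0(\Omega^1_{A_s})\to H^0(C_s,\omega_{\widetilde{C}_s}^{\otimes 2})$, and whether these are nontrivial depends on the geometric genus $g_C$ of the hyperelliptic curve, which is a priori unbounded. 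Your closing inequality ``$g(g+1)/2\geq 6$'' is not compared to any quantity depending only on $g$, and for $g_C$ large the restriction of $\mathrm{Sym}^2$ of a $g$-dimensional subspace of $H^0(\omega_{\widetilde{C}_s})$ need not meet the space of quadrics through the (rational normal) canonical image. A one-step contradiction with the surjectivity of the Kodaira--Spencer map over all of $\mathcal{A}_g$ is precisely what does not close up here; this is why no proof along these lines is known without an additional mechanism.

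Pirola's argument, as quoted and axiomatized in this paper, has a different two-step structure. First, a rigidity statement valid for \emph{every} abelian variety $A$: a hyperelliptic curve translated so that a Weierstrass point maps to $0_A$ produces a positive-dimensional orbit $|2\{0_A\}|$ for rational equivalence of $0$-cycles, equivalently a rational curve in the Kummer variety $K(A)=A/\pm\mathrm{Id}$; by Mumford's theorem \cite{mumford}, a surface swept out by a continuous family of such orbits would annihilate every holomorphic $2$-form on $K(A)_{reg}$, while $\Omega^2_{K(A)_{reg}}$ is globally generated, so the relevant locus has dimension $\leq 1<g$ for all $A$ (this is the case $k=2$ of Proposition \ref{proestimatechowring}). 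Second, a specialization argument to abelian varieties isogenous to products $B\times E$, which are dense in moduli (Theorem \ref{thegenpirola}): the Gauss-map boundedness of the projections $\Sigma_A\to\Sigma_B$ together with the variation of $E$ forces the dimension of this naturally defined locus to drop as $g$ grows, so it becomes countable once $g\geq 3$, and then there is no hyperelliptic curve at all. If you want to salvage an infinitesimal approach, it is this degeneration step --- not the surjectivity of $\kappa$ --- that has to carry the argument.
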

   We will  in fact use in the proof of
Theorem \ref{theogon} (and also  \ref{coroorbit}, \ref{theochowring} below)  some of the arguments in \cite{pirola}
that we generalize in Section \ref{secpirola}.
Theorem \ref{theogon} will be obtained as a consequence of the study of
$0$-cycles modulo rational equivalence on abelien varieties. This generalized setting already appears in the paper
\cite{alzatipirola} where some improvements of Theorem \ref{theopirola} (for example on  the non-existence of trigonal curves on very general abelian varieties of dimension $\geq 4$) were obtained.
  In this paper, Chow groups  with $\mathbb{Q}$-coefficients of a variety $X$ are denoted ${\rm CH}(X)$.
  Rational equivalence of $0$-cycles is not very well understood, despite Mumford's theorem \cite{mumford}. The most striking phenomenon is the existence of surfaces (eg. of Godeaux type, see  \cite{voisingodeaux}) which are of general type but have trivial ${\rm CH}_0$-group.
In the papers \cite{voisink3}, \cite{voisinisot}, we emphasized nevertheless the geometric importance
 of the study of   orbits $$\mid Z\mid=\{Z'\in  X^{(k)},\,\,Z'\,\,{\rm rationally\,\, equivalent \,\,to }\,\,Z\,\,{\rm in}\,\,X\}$$ of  degree $k$  $0$-cycles $Z$ of  $X$ under rational equivalence,  particularly in the case of $K3$ surfaces and hyper-K\"ahler manifolds. It is a general fact that these orbits are countable unions of closed algebraic subsets in the symmetric product of the considered variety, so that their dimension is well-defined.
 Below we denote by  $\{x\}$ the  $0$-cycle of a point $x\in A$ and $0_A$ will be  the origin of $A$. The following results  concerning orbits $|Z|\subset A^{(k)}$ for rational equivalence, and in particular
the orbit $|k\{0_A\}|$, can be regarded as a  Chow-theoretic version of Theorem \ref{theogon}.
  \begin{theo} \label{coroorbit}
  (i) For any abelian variety $A$ and integer $k\geq 1$,   any orbit
 $|Z|\subset A^{(k)}$ has dimension $\leq k-1$.

 (ii)  If $A$ is very general of dimension
   $g\geq 2^{k}(2k-1)+(2^{k}-1)(k-2)$, $A$ has no positive dimensional orbit $|Z|$, with ${\rm deg}\,Z\leq k$.

   (iii) If $k\geq 2$ and  $A$ is very general of dimension
   $g\geq 2^{k-2}(2k-1)+(2^{k-2}-1)(k-2)$, $A$ has no positive dimensional orbit of the form $|Z'+2\{0_A\}|$, with $Z'$ effective and ${\rm deg}\,Z'\leq k-2$.

   (iv) If  $A$ is  a very general abelian variety of dimension $g\geq 2k-1$, the orbit
 $|k\{0_A\}|$ is countable.
\end{theo}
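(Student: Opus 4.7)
The plan is to adapt and extend Pirola's infinitesimal Hodge-theoretic argument from \cite{pirola} (which handled the case $k=2$, hyperelliptic curves), exploiting that every representative of $|k\{0_A\}|$ specializes to the fat point $k\{0_A\}$. Assume for contradiction that $|k\{0_A\}|$ has a positive-dimensional irreducible algebraic component, for some very general $A$ of dimension $g\geq 2k-1$. Restricting to an irreducible curve in this component and passing to a finite \'etale cover, we obtain a smooth pointed curve $(\tilde T,t_0)$ and morphisms $z_1,\ldots,z_k : \tilde T \to A$ with $z_i(t_0)=0_A$ for every $i$ and $\sum_i\{z_i(t)\} \sim k\{0_A\}$ in $\mathrm{CH}_0(A)$ for every $t\in\tilde T$. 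The goal is to derive $z_i\equiv 0_A$ for every $i$, contradicting the positive dimension of $\tilde T$.

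Write $z_i(t) = tv_i + O(t^2)$ in exponential coordinates on $A$ around $0_A$, with $v_i \in T_{0_A}A$. Since the Albanese of $A$ is the identity and preserves rational equivalence of $0$-cycles, $\sum_i z_i(t) = 0_A$ in $A$ for every $t$, which forces the linear constraint $\sum_i v_i = 0$. This cuts out a $(k-1)g$-dimensional subspace in $(T_{0_A}A)^k$ and, by itself, is insufficient.

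The genuinely new information is extracted from the second piece of the Beauville decomposition $\mathrm{CH}_0(A) = \bigoplus_{s\geq 0}\mathrm{CH}_0(A)_{(s)}$: the class $\sum_i(\{z_i(t)\}-\{0_A\})$ vanishes not only in $\mathrm{CH}_0(A)_{(1)}\simeq A\otimes\mathbb{Q}$ (which is the Albanese constraint already used) but also in $\mathrm{CH}_0(A)_{(2)}$, and the leading term in $t$ of the $(2)$-component produces a symmetric quadratic relation among the vectors $v_i$ valued in an auxiliary Hodge-theoretic space. The generalized Pirola pairing developed in Section~\ref{secpirola} detects this relation through a coupling with $H^0(A,\Omega^2_A)$, which for a very general $A$ is sufficiently nondegenerate. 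The resulting quadratic constraint takes values in a space of dimension $\binom{g}{2}$; imposing it on the $(k-1)g$-dimensional subspace $\{\sum v_i=0\}\subset(T_{0_A}A)^k$ and invoking the inequality $\binom{g}{2}\geq (k-1)g$, which is exactly $g\geq 2k-1$, one forces $v_1=\cdots=v_k=0$.

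Iterating the same analysis with the higher-order Taylor coefficients of the $z_i$ (translation-invariance of the Hodge structure of $A$ around $0_A$ makes this formally the same problem at every order) inductively yields $z_i \equiv 0_A$, the desired contradiction. The main obstacle is the construction and quantitative nondegeneracy of the Pirola pairing in the many-point setting: it is this technical input that produces the precise numerical bound $g\geq 2k-1$, matching Conjecture~\ref{conj}.
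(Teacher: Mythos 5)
Your proposal addresses only part (iv) of Theorem \ref{coroorbit}; parts (i), (ii) and (iii) are not touched at all, and in the paper they require genuinely separate arguments (the infinitesimal/linear-algebra estimate of Theorem \ref{ledimorbit} for (i), and the induction of Proposition \ref{proinduction} built on the naturally-defined-subsets machinery of Theorem \ref{thegenpirola} for (ii) and (iii)). Even restricted to (iv), there are two concrete gaps. First, your setup assumes that a positive-dimensional component of $|k\{0_A\}|$ can be parametrized by curves $z_1,\ldots,z_k:\tilde T\to A$ with $z_i(t_0)=0_A$ for all $i$. The orbit $|k\{0_A\}|$ is the set of \emph{all} effective degree-$k$ cycles rationally equivalent to $k\{0_A\}$; a positive-dimensional component of this countable union of closed algebraic subsets need not contain the point $k\{0_A\}\in A^{(k)}$, so the claim that ``every representative specializes to the fat point'' is false and the initial condition $z_i(t_0)=0_A$ is unavailable. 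Second, the decisive step --- that a quadratic constraint ``valued in a space of dimension $\binom{g}{2}$'' imposed on a $(k-1)g$-dimensional space forces $v_1=\cdots=v_k=0$ because $\binom{g}{2}\geq (k-1)g$ --- is a dimension count, not a proof: having at least as many equations as unknowns does not force the zero locus of a system of quadrics to be trivial unless one proves a precise nondegeneracy statement. You acknowledge that this nondegeneracy (``the generalized Pirola pairing in the many-point setting'') is the main obstacle; it is not constructed anywhere, and no such pairing appears in Section \ref{secpirola} of the paper, which instead develops a specialization argument. So the core of the proof is missing.

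For comparison, the paper's route to (iv) is entirely different and does not rest on an infinitesimal computation at $0_A$. One first shows (Proposition \ref{theorelation2}) by a purely formal Pontryagin-product computation, together with Bloch's nilpotence theorem, that any point $x_i$ in the support of a cycle rationally equivalent to $k\{0_A\}$ satisfies $(\{x_i\}-\{0_A\})^{*k}=0$, i.e.\ lies in the set $A_k$. Mumford's theorem applied to the correspondence inducing $x\mapsto(\{x\}-\{0_A\})^{*k}$ gives $\dim A_k\leq k-1$ for \emph{every} abelian variety (Theorem \ref{theochowring}, (i)); this is where holomorphic forms enter, and only through a first-order vanishing statement, with no quadratic pairing. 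Finally, the passage from ``dimension $\leq k-1$ for all $A$'' to ``countable for very general $A$ of dimension $\geq 2k-1$'' is obtained by the degeneration argument of Theorem \ref{thegenpirola}: specializing to abelian varieties isogenous to $B\times E$ and using that the image variety cannot acquire a variable elliptic curve in its Picard variety. The bound $g\geq 2k-1$ in (iv) comes from iterating this one-dimension-drop-per-degeneration argument starting from the bound $k-1$, not from an inequality of the form $\binom{g}{2}\geq(k-1)g$. If you want to salvage your approach, you would need at minimum to (a) handle components of the orbit away from $k\{0_A\}$, (b) actually construct the quadratic pairing and prove its nondegeneracy for very general $A$, and (c) supply independent proofs of (i)--(iii).
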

In fact, Theorem \ref{coroorbit}, (iii)  implies Theorem \ref{theogon}, because a $k$-gonal curve
$C\subset A$,  with normalization $j: \widetilde{C}\rightarrow A$ and  divisor  $D\in {\rm Pic}^k\,\widetilde{C}$ with $h^0(\widetilde{C},D)\geq 2$ provides a positive dimensional orbit $\{j_*D'\}_{D'\in |D|}$ in $A^{(k)}$. We can assume one Weierstrass point $c\in \widetilde{C}$ of $|D|$, that is, a point $c$ such that $h^0(\widetilde{C},D(-2c))\not=0$, is mapped
to $0_A$ by $j$, which provides a positive dimensional orbit of the form $|Z'+2\{0_A\}|$, with $Z'$ effective and ${\rm deg}\,Z'\leq k-2$.

Item (i) of  Theorem \ref{coroorbit} will be proved in Section \ref{secdimorbit} (cf. Theorem  \ref{ledimorbit}).
 The estimates in Theorems \ref{theogon} and  \ref{coroorbit}, (ii) can probably be strongly improved. Estimate
 (i) in Theorem \ref{coroorbit} cannot be  improved. To start with, it is optimal for $g=1$ because
 for any degree $k$ divisor $D$ on an elliptic curve $E$ we have $|D|=\mathbb{P}^{k-1}\subset E^{(k)}$. This immediately implies that the statement is optimal for any $g$ because for
 abelian varieties $A=E\times B$ admitting an elliptic factor, we have $E^{(k)}\subset A^{(k)}$.
  In the case
  where    $g=2$,  we observe that orbits $|Z|\subset A^{(k)}$ are contained in the generalized Kummer
  variety $K_{k-1}(A)$ constructed by Beauville \cite{Beau}. (More precisely, this is true for the open set  of $|Z|$ parameterizing cycles where all points appear with multiplicity $1$ but this is secondary, cf. \cite{voisink3} for a discussion of cycles with multiplicities.) This variety is of dimension $2k-2$ and  has an everywhere nondegenerate holomorphic $2$-form for which any orbit $|Z|$ is totally isotropic, which implies the estimate (i) in the case $g=2$. Furthermore they are
  also orbits for rational equivalence in $K_{k-1}(A)$, as proved in \cite{marian}, hence they are as well constant cycles subvarieties in $K_{k-1}(A)$ in the sense of Huybrechts\cite {huy}. The question whether  Lagrangian (that is maximal dimension) constant cycles subvarieties
  exist in hyper-K\"ahler manifolds  is posed in \cite{voisinisot}.
  For a general abelian variety $A$,   choosing   a smooth curve $C\subset A$ of genus $g'$, we have
 $C^{(k)}\subset A^{(k)}$ for any $k$ and $C^{(k)}$ contains linear systems
 $\mathbb{P}^{k-g'}$, for $k\geq g'$. So when $k$ tends to infinity, the  estimate (i) has optimal growth in $k$.

  Theorem \ref{coroorbit}, (iv), which will be proved in Section \ref{secproofcoriv}, has the following immediate consequence (which is a much better estimate than the one given in Theorem \ref{theogon}):
\begin{coro} If $A$ is a very general abelian variety of dimension $g\geq 2k-1$, and $C\subset A$ is any curve with normalization
$\widetilde{C}$,  one has $h^{0}(C,\mathcal{O}_{\widetilde{C}}(kc))=1$  for any point $c\in \widetilde{C}$.
\end{coro}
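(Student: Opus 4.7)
The plan is to read the corollary directly off Theorem \ref{coroorbit}(iv). Suppose, for contradiction, that there is a curve $C\subset A$ with normalization $\nu:\widetilde{C}\to C$ and a point $c\in\widetilde{C}$ such that $h^0(\widetilde{C},\mathcal{O}_{\widetilde{C}}(kc))\geq 2$. Write $j:=\iota\circ\nu:\widetilde{C}\to A$, where $\iota:C\hookrightarrow A$ is the inclusion; this is a finite morphism. Translating the curve inside $A$ (rational equivalence is translation-invariant, and the hypothesis $h^0(\widetilde{C},\mathcal{O}_{\widetilde{C}}(kc))\geq 2$ is intrinsic to $(\widetilde{C},c)$, so this does not affect the argument), I may assume $j(c)=0_A$. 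The linear system $|kc|$ is then a projective space $\mathbb{P}^n$ with $n\geq 1$, sitting naturally as a closed subvariety of $\widetilde{C}^{(k)}$.

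Composition with the symmetric power $j^{(k)}:\widetilde{C}^{(k)}\to A^{(k)}$ produces a morphism $\phi:\mathbb{P}^n\to A^{(k)}$. For any $D'\in|kc|$ one has $D'\sim_{\rm rat}kc$ on $\widetilde{C}$, hence $j_\ast D'\sim_{\rm rat}k\{j(c)\}=k\{0_A\}$ in $\mathrm{CH}_0(A)$. Thus $\phi(\mathbb{P}^n)\subset|k\{0_A\}|$. By Theorem \ref{coroorbit}(iv), the orbit $|k\{0_A\}|$ is a countable subset of $A^{(k)}$.

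Now $\phi(\mathbb{P}^n)$ is irreducible; if it were of positive dimension it would be uncountable, contradicting (iv). Hence $\phi$ must be constant, with unique value $\phi(kc)=k\{0_A\}$. But then every $D'\in|kc|$ is supported on the finite fiber $j^{-1}(0_A)$, so $|kc|$ contains only finitely many effective divisors, contradicting $n\geq 1$.

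All the substantive work sits in Theorem \ref{coroorbit}(iv); the only other ingredient is the observation that a positive-dimensional linear system on $\widetilde{C}$ either produces a positive-dimensional family of $0$-cycles inside the orbit $|k\{0_A\}|\subset A^{(k)}$, or else (in the degenerate case of constant $\phi$) forces each of its members to be supported on one finite fiber of $j$, both of which are incompatible with the hypotheses. The only point that needs checking is this last dichotomy, and it is immediate from irreducibility of $\mathbb{P}^n$ and finiteness of $j$.
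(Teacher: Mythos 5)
Your proof is correct and follows exactly the route the paper intends: the corollary is stated there as an immediate consequence of Theorem \ref{coroorbit}, (iv), obtained by pushing the linear system $|kc|$ forward into the orbit $|k\{0_A\}|\subset A^{(k)}$ and invoking its countability. Your additional check that the induced map $\phi:\mathbb{P}^n\rightarrow A^{(k)}$ cannot be constant (via finiteness of $j$, so that a constant $\phi$ would force $|kc|$ to be a finite set) is a worthwhile detail that the paper leaves implicit.
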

This corollary could be regarded as the right generalization of Theorem \ref{theopirola}.
\begin{rema}{\rm   Pirola  proves in \cite{pirola2} that for a very general abelian variety
$A$
of dimension $g\geq4$, any curve $C\subset A$ has genus $\geq \frac{g(g-1)}{2}+1$.
This suggests that Theorem \ref{coroorbit}, (iv) is neither optimal, and that an inequality
$g\geq O(\sqrt{k})$ should already imply the countability of $|k\{0_A\}|$.}
\end{rema}
 We will give two proofs of Theorem \ref{coroorbit}, (iv). One of them will use Theorem \ref{theochowring} and Proposition \ref{theorelation}, which are statements of
 independent interest concerning  the Chow ring (as opposed to the Chow groups) of an abelian variety $A$, that we now describe. Here the Chow ring is relative to the  intersection product but one can also consider the ring structure given by the Pontryagin product $*$ defined by
$$z*z'=\mu_*(z\times z')$$
where $\mu:A\times A\rightarrow A$ is the sum map and $z\times z'=pr_1^*z\cdot pr_2^*z'$ for
$z,\,z'\in {\rm CH}(A)$. The two rings are related via the Fourier transform, see \cite{beauvillefourier}.
 Define \begin{eqnarray}
 \label{eqAk}
 A_k\subset A\end{eqnarray} to be the set of points $x\in A$
such that $(\{x\}-\{0_A\})^{*k}=0$ in ${\rm CH}_0(A)$. We can also define $\widehat{A}_k\subset \widehat{A}$
to be the set of $D\in {\rm Pic}^0(A)=:\widehat{A}$ such that
$D^k=0$ in ${\rm CH}^k(A)$. These two sets are in fact related as follows: choose a polarization $\theta$ on $A$, that is an ample divisor. The polarization gives an isogeny of abelian varieties
$$A\rightarrow \widehat{A},\,x\mapsto D_x:=\theta_x-\theta.$$

\begin{lemm}\label{lefourier} One has $D_x^k=0$ in ${\rm CH}^k(A)$ if and only if $(\{x\}-\{0_A\})^{*k}=0$ in ${\rm CH}_0(A)$.
\end{lemm}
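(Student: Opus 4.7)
The natural approach is to apply Beauville's Fourier transform $\mathcal{F}\colon \mathrm{CH}(A)_{\mathbb{Q}}\to \mathrm{CH}(\widehat{A})_{\mathbb{Q}}$ from \cite{beauvillefourier}, which is a $\mathbb{Q}$-linear isomorphism exchanging Pontryagin and intersection products, $\mathcal{F}(\alpha\ast\beta)=\mathcal{F}(\alpha)\cdot\mathcal{F}(\beta)$, and sending the class of a point to $\mathcal{F}(\{a\})=\exp(\ell_a)$, where $\ell_a:=c_1(\mathcal{P}|_{\{a\}\times\widehat{A}})\in\mathrm{CH}^1(\widehat{A})$ and $\mathcal{P}$ is the Poincar\'e bundle. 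Since $\ell_{0_A}=0$, applying $\mathcal{F}$ to $(\{x\}-\{0_A\})^{\ast k}$ yields $(\exp(\ell_x)-1)^k$ in $\mathrm{CH}^*(\widehat{A})$, and as $\mathcal{F}$ is injective the lemma reduces to the assertion that $(\exp(\ell_x)-1)^k=0$ if and only if $D_x^k=0$ in $\mathrm{CH}^k(A)$.

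The first half of this equivalence is purely algebraic: factor $\exp(\ell_x)-1=\ell_x\cdot v_x$ with $v_x=1+\ell_x/2+\ell_x^2/6+\cdots$, a finite sum since $\ell_x^{g+1}=0$, having constant term $1$ and hence invertible in $\mathrm{CH}^*(\widehat{A})_{\mathbb{Q}}$. Thus $(\exp(\ell_x)-1)^k=\ell_x^k\cdot u_x$ with $u_x=v_x^k$ a unit, so the vanishing is equivalent to $\ell_x^k=0$. To descend to $A$, I would use the Mumford formula $(\mathrm{id}_A\times\phi_\theta)^*\mathcal{P}\cong m^*\theta\otimes p_1^*\theta^{-1}\otimes p_2^*\theta^{-1}$ on $A\times A$; restricting to $\{x\}\times A$ gives $\phi_\theta^*\mathcal{P}_x\cong t_x^*\theta\otimes\theta^{-1}=\mathcal{O}_A(D_x)$, hence $\phi_\theta^*\ell_x=D_x$ in $\mathrm{CH}^1(A)$. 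Because $\phi_\theta$ is an isogeny, $\phi_\theta^*$ is injective on rational Chow (with rational inverse $\tfrac{1}{\deg\phi_\theta}\phi_{\theta*}$), so $\ell_x^k=0$ if and only if $D_x^k=0$, closing the chain.

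The only real obstacle is bookkeeping: correctly recalling Beauville's Fourier identities and the compatibility $\phi_\theta^*\mathcal{P}_x=\mathcal{O}_A(D_x)$ between the polarization pullback and the Poincar\'e bundle. Once these are in hand, the mathematical content is the factorization $\exp(\ell)-1=\ell\cdot(\text{unit})$, which uses only that $\ell\in\mathrm{CH}^1(\widehat{A})$ is nilpotent in the Chow ring of the abelian variety.
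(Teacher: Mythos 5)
Your proof is correct, and it lives in the same circle of ideas as the paper's --- both arguments are applications of Beauville's Fourier formalism --- but the decomposition is genuinely different. The paper quotes the specific identity from \cite[Proposition~6]{beauvillefourier}, namely $\frac{\theta^{g-k}}{(g-k)!}D_x^k=\frac{\theta^g}{g!}*\gamma(x)^{*k}$ with $\gamma(x)=-\log_*(\{x\})$, and then passes between the vanishing of $\gamma(x)^{*k}$ and of $(\{x\}-\{0_A\})^{*k}$ using that $\log$ and $\exp$ for the Pontryagin product are polynomial and mutually inverse on nilpotents. You instead work entirely on the dual side: $\mathcal{F}\bigl((\{x\}-\{0_A\})^{*k}\bigr)=(e^{\ell_x}-1)^k=\ell_x^k\cdot(\text{unit})$, so injectivity of $\mathcal{F}$ reduces the lemma to $\ell_x^k=0$, and then $\phi_\theta^*\ell_x=D_x$ together with injectivity of $\phi_\theta^*$ on rational Chow groups (isogeny) finishes. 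Your unit factorization $e^{\ell}-1=\ell\cdot(1+\ell/2+\cdots)$ plays exactly the role of the paper's $\log$/$\exp$ nilpotency argument, just transported through $\mathcal{F}$. What your route buys is that both implications come for free from two injectivity statements, so the ``if and only if'' is transparent and you never need to unpack Proposition~6; what the paper's route buys is brevity, at the cost of leaning on a quoted formula. The only point to watch is the translation convention ($\phi_\theta^*\ell_x$ may come out as $D_{-x}$ rather than $D_x$ depending on whether $\theta_x$ means $t_x^*\theta$ or $t_{-x}^*\theta$), but this is harmless since $(\{x\}-\{0_A\})^{*k}=0$ if and only if $(\{-x\}-\{0_A\})^{*k}=0$ by applying $(-1_A)_*$, which commutes with the Pontryagin product.
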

\begin{proof} This follows from Beauville's formulas in
\cite[Proposition 6]{beauvillefourier}.  We get in particular,  the following equality:
\begin{eqnarray}
\label{eqbeau}
 \frac{\theta^{g-k}}{(g-k)!}D_x^k=\frac{\theta^g}{g!}*\gamma(x)^{*k},
 \end{eqnarray}
where $$\gamma(x):=\{0_A\}-\{x\}+\frac{1}{2}(\{0_A\}-\{x\})^{*2}+\ldots+\frac{1}{g}(\{0_A\}-\{x\})^{*g}=-{\rm log }(\{x\})\in {\rm CH}_0(A).$$ Here the logarithm is taken with respect to the
Pontryagin product $*$ and the development is finite because $0$-cycles of degree $0$ are nilpotent for the Pontryagin
product.
If $(\{0_A\}-\{x\})^{*k}=0$, then $\gamma(x)^{*k}=0$ and thus $D_x^k=0$ by (\ref{eqbeau}).
Conversely, if $D_x^k=0$, then $\gamma(x)^{*k}=0$ by (\ref{eqbeau}). But then also $(\{0_A\}-\{x\})^{*k}=0$ because
$\{x\}={\rm exp}(-\gamma(x))$. (Again ${\rm exp}(-\gamma(x))$ is a polynomial in $\gamma(x)$, hence well-defined, since $\gamma(x)$ is nilpotent for the $*$-product, see \cite{bloch}).
\end{proof}

\begin{theo}\label{theochowring} Let $A$ be an abelian variety of dimension $g$. Then

(i) ${\rm dim}\,A_k\leq k-1$, ${\rm dim}\,\widehat{A}_k\leq k-1$.

(ii) If $A$ is very general and  $g\geq 2k-1$, the sets $\widehat{A}_k$ and  $A_k$  are countable.
\end{theo}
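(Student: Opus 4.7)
By Lemma~\ref{lefourier}, the polarization isogeny $x\mapsto D_x$ restricts to a bijection (up to its finite kernel) between $A_k$ and $\widehat{A}_k$, so $\dim A_k=\dim\widehat{A}_k$ and the two sets are countable simultaneously. It therefore suffices to prove both (i) and (ii) for $A_k$.

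For~(i), let $X\subset A_k$ be an irreducible component of dimension $d$; I must show $d\le k-1$. Writing $y_x:=\{x\}-\{0_A\}$, the defining relation $y_x^{*k}=0$ unfolds to
\[
\sum_{j=0}^{k}(-1)^{k-j}\binom{k}{j}\{jx\}=0\quad\text{in }{\rm CH}_0(A),
\]
a rational equivalence between two effective $0$-cycles of degree $2^{k-1}$ supported on the cyclic subgroup generated by $x$. I would argue by induction on $k$ and reduce to the orbit bound of Theorem~\ref{coroorbit},~(i). The base case $A_1=\{0_A\}$ follows from Roitman's theorem (since ${\rm CH}_0(A)$ is torsion-free modulo the torsion of $A$ itself). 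For the inductive step, factorizing $y_x^{*k}=y_x^{*(k-1)}*y_x=0$ gives the equivalent rational invariance
\[
\tau_{x*}(y_x^{*(k-1)})=y_x^{*(k-1)}\quad\text{in }{\rm CH}_0(A),
\]
so that $y_x^{*(k-1)}$ is rationally invariant under translation by $x$. Combined with the inductive bound $\dim A_{k-1}\le k-2$, this invariance will be used to produce a positive-dimensional orbit $|Z|\subset A^{(k)}$ of dimension at least $d$, and Theorem~\ref{coroorbit},~(i) then forces $d\le k-1$.

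For~(ii), assume $A$ is very general with $g\ge 2k-1$ and suppose for contradiction that an irreducible component $X\subset A_k$ is positive-dimensional. Spreading $(A,X)$ into an algebraic family over an open subset of the moduli of polarized abelian varieties of dimension $g$, the component $X$ must deform with $A$, since it is defined Chow-theoretically. The relation $y_x^{*k}=0$ then differentiates, via the Gauss--Manin connection on the relative Chow group, to a Hodge-theoretic infinitesimal invariant. The Pirola-style infinitesimal variation-of-Hodge-structure argument, as generalized in Section~\ref{secpirola}, shows that this invariant is generically nonzero precisely when $g\ge 2k-1$, contradicting the assumption.

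\textbf{Main difficulty.} The principal challenge is in~(ii): identifying the correct analog of Pirola's infinitesimal invariant associated to the vanishing of the $k$-th Pontryagin power, and verifying that the sharp numerical condition $g\ge 2k-1$ suffices for non-vanishing. Pirola's original argument, ruling out hyperelliptic curves on very general abelian threefolds, corresponds essentially to the case $k=2$, $g=3=2k-1$; for higher $k$ the Pirola machinery must be iterated, with careful tracking of the Hodge components of $H^*(A,\mathbb{Q})$ that absorb the $k$-fold derivative of $y_x^{*k}$. A secondary obstacle in~(i) is ensuring that the inductive ``peeling'' argument really produces a degree-$k$ orbit (rather than one of larger degree $2^{k-1}$), so that the sharp bound $d\le k-1$ is attained and not merely $d\le 2^{k-1}-1$.
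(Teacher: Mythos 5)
Your route to part~(i) via Theorem~\ref{coroorbit},~(i) has a genuine gap, and it is not the ``secondary obstacle'' you describe. The relation $(\{x\}-\{0_A\})^{*k}=\sum_{j}(-1)^{k-j}\binom{k}{j}\{jx\}=0$ says that the two effective cycles $Z_+(x)=\sum_{(-1)^{k-j}=1}\binom{k}{j}\{jx\}$ and $Z_-(x)=\sum_{(-1)^{k-j}=-1}\binom{k}{j}\{jx\}$ are rationally equivalent; but \emph{both} sides move with $x$, so as $x$ varies in a $d$-dimensional component of $A_k$ you obtain a $d$-dimensional family of distinct orbits, not a $d$-dimensional subset of a single orbit $|Z|$, and Theorem~\ref{coroorbit},~(i) then gives you nothing. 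The translation-invariance reformulation of $y_x^{*(k-1)}$ does not repair this, and even if it did you would land on a degree-$2^{k-1}$ bound. The paper's argument is entirely different and more direct: one forms the correspondence $\Gamma^{Pont}_k=\sum_{i=0}^k(-1)^{k-i}\binom{k}{i}\Gamma_i$, with $\Gamma_i$ the graph of multiplication by $i$, computes $(\Gamma^{Pont}_k)^*\eta=0$ for holomorphic forms of degree $<k$ and $(\Gamma^{Pont}_k)^*\eta=k!\,\eta$ in degree $k$ (this is the elementary identity $\sum_i(-1)^{k-i}\binom{k}{i}i^d=0$ for $d<k$, $=k!$ for $d=k$), and then applies Mumford's theorem to conclude that every holomorphic $k$-form restricts to zero on $A_k$, whence $\dim A_k\le k-1$ (Proposition~\ref{proestimatechowring}, Lemma~\ref{lekform}). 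Incidentally, $A_1$ is the whole torsion subgroup of $A$ by Roitman, not just $\{0_A\}$, though this is harmless for dimensions.

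For part~(ii) you invoke ``the Pirola-style infinitesimal variation-of-Hodge-structure argument, as generalized in Section~\ref{secpirola}'', but this mischaracterizes that section and leaves the actual work undone. Theorem~\ref{thegenpirola} is not an infinitesimal-invariant computation: it is a specialization argument to abelian varieties isogenous to $B\times E$, and its hypotheses are (a) that the subset is \emph{naturally defined} in the sense of Definition~\ref{definat} --- in particular stable under pushforward by morphisms of abelian varieties and algebraic in families --- and (b) a dimension bound valid for all (or very general) $A$ of some dimension $g_0$. The paper's proof of (ii) consists exactly of verifying (a) for $A_k$ (Lemma~\ref{leAknat}, using that $f_*$ is a ring morphism for the Pontryagin product) and feeding the bound $\dim A_k\le k-1$ from part~(i) into Theorem~\ref{thegenpirola},~(iii) with $g_0=k$, which yields countability for $g\ge 2k-1$. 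Your sketch never verifies natural-definedness, never uses (i) as the input to (ii), and asserts rather than derives the non-vanishing ``precisely when $g\ge 2k-1$''. The reduction of $\widehat{A}_k$ to $A_k$ via Lemma~\ref{lefourier} is fine, provided you also note, as the paper does, that $A\mapsto\widehat{A}$ is an open map on moduli, so very generality is preserved.
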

Note that in both (i) and (ii), the two statements are equivalent by Lemma \ref{lefourier}, using the fact that
$A\mapsto \widehat{A}$ is an open map between moduli spaces, so that, if  $A$ is very general, so is
$\widehat{A}$.

The fact that Theorem \ref{theochowring} implies Theorem \ref{coroorbit}, (iv),  uses
the  following intriguing result  that does not seem to be written
anywhere, although some related results are available, in particular the results of \cite{colombo}, \cite{herbaut}, \cite{voisindiag}.

\begin{prop}\label{theorelation} Let $A$ be an abelian variety  and let $x_1,\ldots,\,x_k$ be $k$ points of $A$ such that $\sum_{i=1}^k\{x_i\}-k\{0_A\}=0$ in ${\rm CH}_0(A)$. Then for any $i=1,\ldots,\,k$
\begin{eqnarray}\label{eqrod} (\{x_i\}-\{0\})^{*k}=0\,\,{\rm in}\,\,{\rm CH}_0(A).
\end{eqnarray}
In other words, $x_i\in A_k$.
\end{prop}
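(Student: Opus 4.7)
The plan is to reduce the assertion to a family of Newton-type identities in the commutative Pontryagin algebra $(\mathrm{CH}_0(A),*)$ and then to conclude via a Cayley--Hamilton style substitution. The key extra input beyond the hypothesis is that $[n]\colon A\to A$ is a morphism, so its pushforward $[n]_*$ preserves rational equivalence; applying $[n]_*$ to $\sum_i\{x_i\}=k\{0_A\}$ yields the infinite family of identities
\[
\sum_{i=1}^k\{nx_i\}=k\{0_A\}\quad\text{in}\quad \mathrm{CH}_0(A),\qquad n=1,2,3,\ldots
\]

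Setting $b_i:=\{x_i\}-\{0_A\}$ and using $\{nx_i\}=\{x_i\}^{*n}=(\{0_A\}+b_i)^{*n}$ under the Pontryagin product, the binomial expansion (together with summation over $i$) rewrites the $n$-th identity as
\[
\sum_{m=1}^n \binom{n}{m} p_m = 0\quad\text{where}\quad p_m := \sum_{i=1}^k b_i^{*m}.
\]
Taking $n=1,\ldots,k$ gives a lower-unitriangular system in $(p_1,\ldots,p_k)$, the diagonal entries being $\binom{n}{n}=1$, and this forces $p_1=p_2=\cdots=p_k=0$.

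Newton's identities, valid in any commutative $\mathbb{Q}$-algebra, express each elementary symmetric polynomial $e_m(b_1,\ldots,b_k)$ as a polynomial with rational coefficients in $p_1,\ldots,p_m$. Hence $e_1=\cdots=e_k=0$ in $(\mathrm{CH}_0(A),*)$. For any fixed index $j$, the product $\prod_{i=1}^k(b_j - b_i)$ vanishes trivially (the $i=j$ factor is zero); expanding via elementary symmetric polynomials it also equals
\[
\sum_{m=0}^{k}(-1)^m e_m\, b_j^{*(k-m)}.
\]
With $e_0=\{0_A\}$ and $e_m=0$ for $m\geq 1$, this collapses to $b_j^{*k}=0$, i.e. $(\{x_j\}-\{0_A\})^{*k}=0$, which is the desired conclusion.

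I expect no substantial obstacle on this route. The pushforward compatibility of $[n]_*$ with rational equivalence is standard, the Pontryagin binomial is formal, and Newton's identities over $\mathbb{Q}$ are classical. What is essential, and what is not a feature of the purely ring-theoretic identity $\sum b_i=0$, is the point-class relation $\{x\}^{*n}=\{nx\}$; this is the mechanism through which the single cycle identity in the hypothesis generates the whole sequence of power-sum vanishings.
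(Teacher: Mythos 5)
Your proof is correct. It runs on the same underlying mechanism as the paper's proof (given there as Proposition \ref{theorelation2}): push the relation forward under the multiplication maps $m_n$, use $\{x\}^{*n}=\{nx\}$ to read the result as power-sum identities in the Pontryagin ring, and then exploit the Newton relations between power sums and elementary symmetric functions. But the execution is genuinely different and arguably cleaner. The paper breaks the symmetry by singling out $x_1$: it works with the elementary symmetric functions $\gamma_l$ of the classes $\{x_2\},\ldots,\{x_k\}$ themselves, shows inductively via (\ref{eqpoutheorrelation2}) that each $\gamma_l$ is a degree-$l$ polynomial in $\{x_1\}-\{0_A\}$ with nonzero top coefficient, and then extracts the conclusion from the trivial vanishing $\gamma_k=0$ (there being only $k-1$ indices available) combined with Bloch's nilpotency theorem $(\{x_1\}-\{0_A\})^{*(g+1)}=0$, which is needed to convert an arbitrary degree-$k$ polynomial relation into the vanishing of the pure $k$-th power. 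Your version stays symmetric in all $k$ points: the unitriangular system $\sum_{m=1}^n\binom{n}{m}p_m=0$ for $n=1,\ldots,k$ kills every power sum of the $b_i=\{x_i\}-\{0_A\}$, Newton's identities (over $\mathbb{Q}$) then kill every elementary symmetric function, and the Cayley--Hamilton substitution $0=\prod_{i=1}^k(b_j-b_i)=\sum_{m=0}^k(-1)^m e_m * b_j^{*(k-m)}$ yields $b_j^{*k}=0$ outright. What this buys is a self-contained argument: you avoid both the inductive bookkeeping of the coefficients $\alpha_{l,i}$ (whose nonvanishing the paper asserts with little detail) and any appeal to Bloch's theorem; nothing of substance is lost in exchange.
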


 For the proof of  Theorem \ref{theochowring}, we will show how
  the dimension estimate provided by (i) implies the non-existence theorem stated in (ii). This is  obtained by establishing and applying Theorem \ref{thegenpirola},  that we will present in Section \ref{secpirola}. This theorem, which is obtained by a  direct generalization of  Pirola's arguments in \cite{pirola}, says that ``naturally defined subsets'' of abelian varieties
  (see Definition \ref{definat}), assuming they are proper subsets for abelian varieties of a  given  dimension $g$, are at most countable for very general abelian varieties of  dimension $\geq 2g-1$.

\vspace{1cm}

{\bf Thanks.} {\it This paper is deeply influenced by the reading of the beautiful Pirola paper
\cite{pirola}. I thank the organizers of the Barcelona  Workshop on Complex Algebraic Geometry dedicated to Pirola's 60th birthday for giving me the opportunity to speak about Pirola's work, which led  me to   thinking  to related
  questions.}
\section{\label{secpirola}Naturally defined subsets of abelian varieties}
The proof of Theorem \ref{theopirola} by Pirola  has two steps. First of all, Pirola shows that hyperelliptic curves in an abelian variety $A$, one of whose Weierstrass points coincides  with $0_A$, are rigid.
Secondly he deduces from this rigidity statement the nonexistence of any hyperelliptic curve in a very general abelian variety of dimension $\geq 3$ by an argument of
 specialization to abelian varieties isogenous to a product
 $B\times E$, that we now extend to cover
more situations.

\begin{Defi} \label{definat}We will say that a subset $\Sigma_A\subset A$  is natural if it satisfies
the following conditions:

(0)  $\Sigma_A\subset A$ is defined for any abelian variety $A$ and   is a countable union of closed algebraic subsets of $A$.

(i)  For any morphism
$f:A\rightarrow B$ of abelian varieties, $f(\Sigma_A)\subset \Sigma_B$.

(ii) For any family $\mathcal{A}\rightarrow S$, there is a countable union
of  closed algebraic subsets
$\Sigma_{\mathcal{A}}\subset \mathcal{A}$ such that the set-theoretic fibers satisfy
$\Sigma_{\mathcal{A},b}=\Sigma_{\mathcal{A}_b}$.
\end{Defi}

Recall that the dimension of a countable  union of closed algebraic subsets is defined as the supremum of the dimensions of its components (which are well defined since we are over
 the uncountable field $ \mathbb{C}$).
 \begin{rema}{\rm  By morphism of abelian varieties $A,\,B$, we mean group morphisms, that is, mapping $0_A$ to $0_B$.
 }
 \end{rema}
\begin{theo} \label{thegenpirola}  Let $\Sigma_A\subset A$ be a naturally defined subset.

(i) Assume that
for ${\rm dim}\,A=g_0$, one has $\Sigma_A\not=A$. Then for very general $A$ of dimension $\geq 2g_0-1$,
$\Sigma_A$ is at most countable.

(ii) Assume that ${\rm dim}\,\Sigma_A\leq k$ for any $A$. Then for very general $A$ of dimension $\geq 2k+1$,
$\Sigma_A$ is at most countable.

(iii) Assume that ${\rm dim}\,\Sigma_A\leq k-1$ for a very general abelian variety $A$ of dimension
$g_0\geq k$. Then for a very general abelian variety $A$ of dimension $\geq g_0+k-1$,
$\Sigma_A$ is at most countable.
\end{theo}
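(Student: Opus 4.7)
The plan is to reduce parts (i) and (ii) to part (iii), and then prove (iii) by induction on $g - g_0$ using specialization to products $B \times E$ with $E$ elliptic, following the strategy of Pirola's proof of Theorem \ref{theopirola}. Part (ii) follows from (i) by taking $g_0 = k+1$: the uniform bound $\dim \Sigma_A \leq k$ forces $\Sigma_A \subsetneq A$ when $\dim A = k+1$, and (i) yields countability for very general $A$ of dim $\geq 2k+1$. Part (i) follows from (iii) with $k = g_0$: the hypothesis $\Sigma_B \neq B$ for some $B$ of dim $g_0$, combined with condition (ii) of Definition \ref{definat}, gives $\dim \Sigma_B \leq g_0 - 1$ for very general $B$ of dim $g_0$ (the locus in the moduli base where $\Sigma_{\mathcal{B},s} = \mathcal{B}_s$ cannot be everything). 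Then (iii) with $k = g_0$ gives countability for very general $A$ of dim $\geq 2g_0 - 1$.

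For (iii), I would argue by induction on $j = g - g_0 \in \{0, 1, \ldots, k-1\}$ with the inductive claim: for very general $A$ of dim $g_0 + j$, $\dim \Sigma_A \leq k - 1 - j$. The base case $j = 0$ is the hypothesis; the case $j = k - 1$ gives $\dim \Sigma_A \leq 0$, i.e.\ countability, and the extension to dimensions $g > g_0 + k - 1$ follows by iterating the specialization to products containing a very general factor of dim $g_0 + k - 1$. For the inductive step, suppose the bound fails at $j$. By condition (ii) of Definition \ref{definat} and upper semicontinuity of fiber dimensions for each closed component of $\Sigma_{\mathcal{A}}$ in the universal family, a positive-dim component of $\Sigma_A$ survives specialization to $\Sigma_{B \times E}$, with $B$ very general of dim $g_0 + j - 1$ and $E$ a very general elliptic curve; naturality applied to the isogeny $B \times E \to A_0$ preserves dimension up to finite fibers. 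Applying naturality to the projections $p_B, p_E$ and inclusions yields $\Sigma_{B \times E} \subset \Sigma_B \times \Sigma_E$; by induction $\dim \Sigma_B \leq k - j$, and for very general elliptic $E$ each irreducible component of $\Sigma_E$ is either a point or $E$ itself, so $\Sigma_E$ is either all of $E$ or a countable set of points.

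The hardest step will be to sharpen the naive product bound $\dim \Sigma_{B \times E} \leq \dim \Sigma_B + \dim \Sigma_E$ to the strict inequality $\dim \Sigma_{B \times E} \leq k - j - 1$ required to close the induction. This is where the generalization of Pirola's rigidity argument alluded to in the introduction enters: varying $E$ in its moduli $M_{1,1}$ and exploiting the two-parameter naturality under $\mathbb{Z} \oplus \mathbb{Z} \subset \text{End}(B \times E)$ (for $B, E$ very general and non-isogenous), any resulting family of positive-dim components of $\Sigma_{B \times E_t}$ over $M_{1,1}$ should be forced to descend to a positive-dim component of $\Sigma_B$ of dimension $\geq k - j$, contradicting the induction hypothesis. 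This rigidity is the essential generalization of Pirola's original rigidity of hyperelliptic curves with a Weierstrass point at the origin, and is the technical heart of the argument.
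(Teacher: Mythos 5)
Your reduction of (ii) to (i) and of (i) to (iii), and your overall induction (specialize a very general $A$ of dimension $g$ to a variety isogenous to $B\times E$ with $B$ very general of dimension $g-1$, and force the dimension of $\Sigma_A$ to drop by one at each step) is exactly the architecture of the paper's proof, which is organized around Proposition \ref{propourtheogenpirola}. But the inductive step, which you yourself flag as ``the technical heart,'' is missing, and the one concrete mechanism you propose for it does not work.

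First, the route via $\Sigma_{B\times E}\subset \Sigma_B\times\Sigma_E$ is too weak. For a very general elliptic curve $E$ one may perfectly well have $\Sigma_E=E$ (e.g.\ for $\Sigma_A=A_k$ with $k\geq 2$ one has $E_k=E$), in which case your product bound gives only $\dim\Sigma_{B\times E}\leq \dim\Sigma_B+1$ and you cannot even conclude that the projection to $B$ is generically finite on a component of $\Sigma_{B\times E}$. The paper replaces this by Lemma \ref{lefini}: for very general parameters the cohomology class of an irreducible component $\Sigma'$ is a nonzero multiple of a power of the polarization, so $p_{\lambda*}[\Sigma']\neq 0$ and $p_\lambda$ is generically finite on $\Sigma'$ regardless of what $\Sigma_E$ is. Second, your stated contradiction is not one: if the induction hypothesis at level $j-1$ is $\dim\Sigma_B\leq k-j$, then producing a component of $\Sigma_B$ of dimension \emph{equal to} $k-j$ contradicts nothing. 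The actual contradiction in the paper is of a different nature: if the dimension does not drop, then $p_\lambda(\Sigma')$ is a whole component of $\Sigma_B$, hence independent of $E$; a boundedness argument (via Gauss maps composed with generic linear projections of Grassmannians) packages the maps $p_{\lambda,\Sigma}\colon\Sigma'\to p_\lambda(\Sigma')$ for all $\lambda$ into finitely many families over the base; and then the projection formula shows that $\tilde p_*\circ\tilde j^*$ is nonzero on $\mathrm{Pic}^0$, hence injective by simplicity of $\mathrm{Pic}^0(\mathcal{A}_b)$ at the very general point, hence injective on $\mathrm{Pic}^0(E_b)$ after specializing --- so $\mathrm{Pic}^0$ of a variety depending only on $B$ would contain a varying elliptic curve, which is absurd. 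None of the three ingredients (cohomological generic finiteness, boundedness, the Picard-variety argument) appears in your proposal, so the proof is not complete as written.
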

 Statement (ii) is a particular case of (i) where we do $g_0=k+1$. Both (i)  and (iii) will  follow from the following result:
 \begin{prop}\label{propourtheogenpirola}(a)  If for a very general abelian variety $B$  of dimension $g> k$, one has
 $$0<{\rm dim}\,\Sigma_B\leq k$$  then, for a very general abelian variety $A$  of dimension $g+1$, one has
 $${\rm dim}\,\Sigma_A\leq k-1.$$
 (b) If for a very general abelian variety $B$  of dimension $g>0$, $\Sigma_B$ is countable, then for $A$ very general of dimension $\geq g$, $\Sigma_A$ is countable.
 \end{prop}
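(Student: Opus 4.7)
The plan for both parts is to adapt Pirola's specialization strategy. For $A$ very general of dimension $g+1$ (for (a)) or of dimension $\geq g$ (for (b)), I would construct a smooth one-parameter family $\mathcal{A}\to(S,0)$ with $\mathcal{A}_0=B\times E$, where $B$ is very general of dimension $g$ and $E$ is a very general elliptic curve, and with $\mathcal{A}_s$ very general for $s$ outside a countable union of proper closed subsets of $S$; this is possible because the product locus $M_g\times M_1$ has positive codimension in the relevant moduli space of polarized abelian varieties. Condition (ii) produces $\Sigma_{\mathcal{A}}\subset\mathcal{A}$ as a countable union of closed algebraic subsets with $\Sigma_{\mathcal{A},s}=\Sigma_{\mathcal{A}_s}$, and upper semicontinuity of the fiber dimension on each irreducible component gives $\dim\Sigma_A\leq\dim\Sigma_{B\times E}$. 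Applying condition (i) to the two projections $pr_1,pr_2$ from $B\times E$ yields $pr_1(\Sigma_{B\times E})\subset\Sigma_B$ and $pr_2(\Sigma_{B\times E})\subset\Sigma_E$. Since every irreducible closed algebraic subset of the elliptic curve $E$ is either a point or all of $E$, the set $\Sigma_E$ is either countable or equal to $E$. An irreducible component $C$ of $\Sigma_{B\times E}$ therefore falls into two types: (I) $pr_2(C)$ is a point, in which case $C\subset B\times\{e\}\cong B$ and $\dim C\leq\dim\Sigma_B$; or (II) $pr_2(C)=E$, which forces $\Sigma_E=E$, and the fibers of $pr_2|_C$ embed via $pr_1$ in $\Sigma_B$, giving $\dim C\leq\dim\Sigma_B+1$.

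For part (b), the hypothesis makes $\Sigma_B$ countable, so type (I) components are single points and type (II) components are necessarily full copies $\{b\}\times E$; the latter requires $\Sigma_E=E$. The case $g=1$ is exactly the hypothesis of (b). For $g>1$ I would exclude $\Sigma_E=E$ by specializing $B$ in turn to $B''\times E$ and analyzing the irreducible components of $\Sigma_{\mathcal{B}}$ over $M_g$: the inclusion $\{0\}\times E\subset\Sigma_{B''\times E}$ produces an uncountable, necessarily non-dominant contribution to $\Sigma_{\mathcal{B}}$, and tracking how this contribution varies with $(B'',E)$ is incompatible with $\Sigma_B$ being countable for very general $B$. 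A standard induction on $\dim A-g$ then yields (b) in full.

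Part (a) is the harder statement and contains the main obstacle of the proof. The naive two-projection estimate above yields only $\dim\Sigma_{B\times E}\leq k+1$, falling short of the target $\dim\Sigma_A\leq k-1$ by two units. Closing this gap requires a refined rigidity argument in the spirit of Pirola's treatment of hyperelliptic curves: I would track the dominant component $\Sigma'\subset\Sigma_{\mathcal{A}}$, whose generic fiber has dimension $d=\dim\Sigma_A$, and show that its limit $\Sigma'_0\subset\Sigma_{B\times E}$ is further constrained by the invariance of $\Sigma_{B\times E}$ under $[n]_B\times\mathrm{id}_E$ and $\mathrm{id}_B\times[n]_E$ (coming from condition (i) applied to these group morphisms of $B\times E$). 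A maximal-dimension component of type (II) would correspond to a 1-parameter family of $k$-dimensional cycles in the fixed set $\Sigma_B$, and this invariance, together with the fact that $\Sigma_B$ is a fixed countable union of closed algebraic subsets, forces the family to be essentially constant, gaining one unit of codimension; simultaneously, type (I) components of dimension equal to $\dim\Sigma_B$ are ruled out as generic limits, since the generic $A$ in the family is simple with $\mathrm{End}(A)=\mathbb{Z}$ and cannot absorb a limit of the required ``horizontal'' form. Combining these two reductions would yield $d\leq k-1$. This is the step where I expect the real technical difficulty, and it is the direct analogue of Pirola's rigidity argument from \cite{pirola}.
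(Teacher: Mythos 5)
Your overall strategy (specialize to a split abelian variety and use the two projections) is close in spirit to Pirola's, but it diverges from the paper's argument at the one point that matters, and, as you yourself acknowledge, it does not close. The paper does \emph{not} degenerate $A$ to an actual product $B\times E$: it works along the dense countable union of loci $S'_\lambda$ where the fibre $\mathcal{A}_b$ is \emph{isogenous} to $B\times E$, but where the chosen top-dimensional component $\Sigma'_{\mathcal{A}_b}$ is still a flat member of the family coming from the very general point. This is exactly what makes Lemma \ref{lefini} available: the class $[\Sigma'_{\mathcal{A}_b}]$ is locally constant, hence a nonzero multiple of $\theta^l$ (computed at the very general fibre, where these are the only Hodge classes), so $p_{\lambda*}[\Sigma'_{\mathcal{A}_b}]\neq 0$ and the projection $\Sigma'_{\mathcal{A}_b}\rightarrow\mathcal{B}_b$ is generically finite onto its image. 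That single fact already gives $\dim\Sigma_A\leq\dim\Sigma_B\leq k$, and it gives (b) outright: an irreducible variety mapping generically finitely into a countable set is a point. At the honest product $B\times E$ this control is lost --- the special fibre acquires extra Hodge classes, the ``vertical'' components $\{b\}\times E$ have $pr_{1*}$-trivial class --- and your two-projection estimate can only see $\dim\Sigma_{B\times E}\leq k+1$. Your proposed repair for (b) (excluding $\Sigma_E=E$ by a further degeneration of $B$ and ``tracking how this contribution varies'') is not an argument; note also that the hypothesis of (b) concerns dimension $g$ only, so you have no a priori control of $\Sigma_E$ for elliptic curves when $g>1$.

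For (a), the drop from $k$ to $k-1$ is the genuinely hard step and your sketch does not supply it. The paper's mechanism is: first, a boundedness statement, proved with Gauss maps, showing that as $\lambda$ varies the images $p_\lambda(\Sigma'_{\mathcal{A}_b})$ and the maps $p_{\lambda,\Sigma}$ fall into finitely many families, hence (after a base change) glue into a single family $\Sigma''_{\mathcal{A}_{S'}}$ over $S'$; second, if equality $k'=k$ held, then $p_\lambda(\Sigma'_{\mathcal{A}_b})$ would be a whole component of $\Sigma_{\mathcal{B}_b}$, hence independent of $E$, and the morphism $\tilde{p}_*\circ\tilde{j}^*:{\rm Pic}^0(\mathcal{A}_b)\rightarrow{\rm Pic}^0(\widetilde{\Sigma''_{\mathcal{A}_{b}}})$, which is injective because ${\rm Pic}^0(\mathcal{A}_b)$ is simple at the very general point of $S''$, would inject a \emph{varying} elliptic curve ${\rm Pic}^0(E_b)$ into the Picard variety of a variety depending only on $\mathcal{B}_b$ --- a contradiction. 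Your appeal to invariance under $[n]_B\times{\rm id}_E$ and to ${\rm End}(A)=\mathbb{Z}$ does not substitute for either step. In short, the proposal identifies the right circle of ideas but is missing both the cycle-class/generic-finiteness lemma that powers the whole proof and the Picard-variety rigidity argument that yields the strict inequality in (a).
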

 Indeed, applying Proposition \ref{propourtheogenpirola}, (a),  we conclude  in case (i) that
 the dimension of $\Sigma_A$ is strictly  decreasing  with $g\geq g_0$ as long as it is not equal to $0$, and by assumption it is not greater than $g_0-1$ for $g=g_0$. Hence  the dimension
 of $\Sigma_A$ must be
 $0$ for some $g\leq 2g_0-1$. By Proposition \ref{propourtheogenpirola}, (b), we then conclude that $\Sigma_A$ is countable for any  $g\geq 2g_0-1$.

  For case (iii), the argument is the same except that we start with
 dimension $g_0=k+1$ and we conclude similarly that the dimension of $\Sigma_A$ is strictly  decreasing  with $g\geq g_0$ as long as it is not equal to $0$. Furthermore, for $g=g_0$, this dimension  is equal to $k-1$. Hence  the dimension
 of $\Sigma_A$ must be
 $0$ for some $g\leq g_0+k-1$ and thus, by Proposition \ref{propourtheogenpirola}, (b),  $\Sigma_A$ is countable for any  $g\geq g_0+k-1$.  This proves Theorem \ref{thegenpirola} assuming Proposition
 \ref{propourtheogenpirola} that we now prove along the same lines as in \cite{pirola}.

 \begin{proof}[Proof of Proposition \ref{propourtheogenpirola}] Assume that ${\rm dim}\,\Sigma= k'$ for a very general abelian variety $A$ of dimension $g+1$. From the definition of a naturally defined subset, and by standard arguments involving the  properness and countability properties
 of relative Chow varieties, there exists, for each universal
 family $\mathcal{A}\rightarrow S$ of polarized abelian varieties with given polarization type
 $\theta$,  a family $\Sigma'_{\mathcal{A}}\subset \Sigma_{\mathcal{A}_{S'}}\subset\mathcal{A}_{S'}$, where $S'\rightarrow S$ is a generically finite dominant base-change morphism, $\mathcal{A}_{S'}\rightarrow S'$ is the base-changed family,  and the morphism  $\Sigma'_{\mathcal{A}}\rightarrow S'$ is flat, with irreducible fibers  of relative dimension $k'$. In other words, we choose one $k'$-dimensional component of $\Sigma_A$ for each $A$, and we can do this in families, maybe after passing to a generically finite cover of a Zariski open set of the base.

The main observation is the fact that there is a dense contable union of
 algebraic subsets $S'_\lambda\subset S'$ along which the fiber $\mathcal{A}_b$
 is isogenous to a product
 $B_\lambda\times E$ where $B$ is a generic abelian variety of dimension $g$ with polarization of type determined by $\lambda$ and $E$ is an elliptic curve ($\lambda$ also encodes the structure of the isogeny).
 Along each $S'_\lambda$, using axiom (i) of Definition
 \ref{definat}, possibly after passing to a generically finite cover $S''_\lambda$,
 we have a morphism
 $$p_\lambda:\mathcal{A}_{S''_\lambda}\rightarrow \mathcal{B}_{S''_\lambda}$$
 and $p_\lambda(\Sigma'_{\mathcal{A}_{S''_\lambda}})\subset \Sigma_{\mathcal{B}_{S''_\lambda}}$ by axiom (i) of Definition
 \ref{definat}.
 \begin{lemm} \label{lefini}If $\Sigma_B\subset B$ is a proper subset for a very general
  abelian variety $B$ of dimension $g$,  the morphism $p_{\lambda,\Sigma}:=p_{\lambda\mid \Sigma'_{\mathcal{A}_b}}:\Sigma'_{\mathcal{A}_b}\rightarrow \mathcal{B}_b$ is generically
 finite on its image for any point $b$ of $S''_\lambda$.
 \end{lemm}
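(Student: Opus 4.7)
The plan is to proceed by contradiction, assuming that the generic fibre of $p_{\lambda,\Sigma}$ has positive dimension. Let $E'\subset\mathcal{A}_b$ denote the connected component of $\ker p_\lambda$, an elliptic curve isogenous to $E$. Since $\dim\mathcal{A}_b-\dim\mathcal{B}_b=1$, the fibres of $p_\lambda$ are one-dimensional: finite unions of translates of $E'$. A positive-dimensional generic fibre of $p_{\lambda,\Sigma}$ must therefore, by dimension, be a full translate of $E'$. Using the irreducibility of $\Sigma'_{\mathcal{A}_b}$ and an open-dense argument, one checks that for every $x$ in a dense open subset of $\Sigma'_{\mathcal{A}_b}$ the translate $x+E'$ is contained in $\Sigma'_{\mathcal{A}_b}$.

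Next, I would apply axiom (i) of Definition \ref{definat} to the complementary projection $q_\lambda:\mathcal{A}_b\to E$ obtained from the isogeny $\mathcal{A}_b\sim\mathcal{B}_b\times E$. The restriction $q_\lambda|_{E'}:E'\to E$ is an isogeny, hence surjective, so
\[
q_\lambda(x+E')=q_\lambda(x)+q_\lambda(E')=q_\lambda(x)+E=E.
\]
Therefore $q_\lambda(\Sigma'_{\mathcal{A}_b})=E$, and axiom (i) forces $\Sigma_E=E$. As $b$ varies in $S''_\lambda$ the elliptic curve $E$ varies in a positive-dimensional family, so axiom (ii) propagates this conclusion to $\Sigma_E=E$ for very general $E$.

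The main obstacle will then be to convert this conclusion into a contradiction with the hypothesis that $\Sigma_B\subsetneq B$ for very general $B$ of dimension $g$. Following Pirola's strategy in \cite{pirola}, one specialises a very general $B$ of dimension $g$ to an abelian variety isogenous to a product $B'\times E$ with $B'$ very general of dimension $g-1$ and $E$ very general; axiom (i) applied to the inclusion $E\hookrightarrow B'\times E$ gives $\{0\}\times E\subset \Sigma_{B'\times E}$, so $\Sigma_{B'\times E}$ acquires a positive-dimensional component. Combined with a recursive application at dimension $g-1$ of the translation-invariance analysis developed in the first two paragraphs, together with axiom (ii) and upper semi-continuity of fibre dimension in the family $\Sigma_{\mathcal B}$, this propagates upward to force $\Sigma_B=B$ for very general $B$ of dimension $g$, contradicting properness. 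The delicate step is controlling this induction so that the dimension of $\Sigma_B$ really grows at each stage: one must ensure that the specialisations remain within naturally defined families to which axiom (ii) applies, and that the components one produces are not absorbed by an exceptional locus of the specialisation.
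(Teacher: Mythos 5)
Your first two paragraphs are fine as far as they go: if $p_{\lambda,\Sigma}$ had positive-dimensional generic fibre, then $\Sigma'_{\mathcal{A}_b}$ would indeed be swept out by translates of the elliptic curve $E'=(\ker p_\lambda)^0$, and projecting by $q_\lambda$ and invoking axiom (i) would give $\Sigma_E=E$. The fatal problem is the last step: $\Sigma_E=E$ is \emph{not} a contradiction. The hypothesis of the lemma only says that $\Sigma_B\subsetneq B$ for very general $B$ of dimension $g$ (the dimension of $\mathcal{B}_b$); it says nothing about abelian varieties of dimension $1$, and in the paper's actual applications one really does have $\Sigma_E=E$ for every elliptic curve --- e.g.\ $E_k=E$ for $k\geq 2$, since a degree-zero $0$-cycle on an elliptic curve with trivial Albanese image is rationally trivial. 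Your attempted rescue in the final paragraph runs the implication in the wrong direction: specialization and semicontinuity let you transfer a positive-dimensional component of $\Sigma$ from the very general member of a family to a special member (such as a product $B'\times E$), never from the special member back to the very general one. Producing $\{0\}\times E\subset\Sigma_{B'\times E}$ therefore puts no constraint whatsoever on $\Sigma_B$ for very general $B$, and no contradiction with properness can be extracted this way. You also, by your own admission, do not carry out the induction you describe, so the argument is incomplete even on its own terms.

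The paper's proof is entirely different and much shorter: since $\Sigma'_{\mathcal{A}_b}$ is a proper subvariety moving in a flat family over $S'$, its (locally constant) cohomology class is, at a very general point of $S'$, a nonzero multiple of $\theta^l$ with $l=\mathrm{codim}\,\Sigma'_{\mathcal{A}_b}\geq 1$, because powers of the polarization generate all Hodge classes of a very general polarized abelian variety. The pushforward $p_{\lambda*}(\theta^l)$ is nonzero in $H^{2l-2}(\mathcal{B}_b,\mathbb{Q})$, so the image of $\Sigma'_{\mathcal{A}_b}$ has the same dimension as $\Sigma'_{\mathcal{A}_b}$ itself, and generic finiteness follows from irreducibility. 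You may want to note that this cohomological mechanism is exactly what makes the ``very general'' hypothesis do its work here, whereas your argument never uses it.
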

 \begin{proof} As $p_\lambda(\Sigma'_{\mathcal{A}_b})\subset \Sigma_{\mathcal{B}_b}$ for $b\in S''_\lambda$, and we know by assumption that ${\rm dim}\,\Sigma_{\mathcal{B}_b}<g$, we conclude that $\Sigma'_{\mathcal{A}_b}\subset \mathcal{A}_b$ is a proper algebraic subset for $b\in S''_\lambda$, hence also for general $b\in S'$. For very general $b\in S'$, the cycle class
 $[\Sigma'_{\mathcal{A}_b}]\in H^{2l}(\mathcal{A}_b,\mathbb{Q})$, $l:={\rm codim }\,\Sigma'_{\mathcal{A}_b}$,  is a nonzero multiple of $\theta^l$ because the latter generates
  the space of degree $2l$  Hodge classes of a very general abelian  variety with polarizing class $\theta$.
 We thus conclude that $p_{\lambda*}([\Sigma'_{\mathcal{A}_b}])$ is nonzero in $H^{2l-2}(\mathcal{B}_b,\mathbb{Q})$, and as $\Sigma'_{\mathcal{A}_b}$ is irreducible by construction, it follows that $p_{\lambda,\Sigma}$ is generically finite on its image.
 \end{proof}
 Lemma \ref{lefini} applied to the case where
 $\Sigma_{\mathcal{B}_b}\subset \mathcal{B}_b$ is countable while ${\rm dim}\,\mathcal{B}_b>0$ implies statement (b) of Proposition \ref{propourtheogenpirola}.  We now concentrate on statement (a) and thus assume that
 ${\rm dim}\,B>{\rm dim}\,\Sigma_B>0$ for general $B$ of dimension $g$.
 We want to show that ${\rm dim}\,\Sigma'_{\mathcal{A}_b}<{\rm dim}\,\Sigma_{\mathcal{B}_b}$
 which, using  Lemma \ref{lefini} when $b\in S''_\lambda$ for some $\lambda$,  means   that $p_\lambda(\Sigma'_{\mathcal{A}_b})$ is not a component of
 $\Sigma_{\mathcal{B}_b}$.
 \begin{lemm}  In the situation above, the set  of varieties (of dimension $k'={\rm dim}\,\Sigma'_{\mathcal{A}_b}$) $\Sigma'_{\mathcal{A}_b,p_\lambda}:=p_\lambda(\Sigma'_{\mathcal{A}_b})$  and morphisms
$ p_{\lambda,\Sigma}: \Sigma'_{\mathcal{A}_b}\rightarrow \Sigma'_{\mathcal{A}_b,p_\lambda}$ for all $\lambda$'s is bounded up to birational transformations.
   \end{lemm}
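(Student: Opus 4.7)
The plan is to replace the varying targets $\mathcal{B}_b$ -- whose polarization type and isogeny degree relative to $\mathcal{A}_b$ grow unboundedly with $\lambda$ -- by quotients of a single bounded family of abelian varieties, namely the Albanese varieties of desingularized fibers of $\Sigma'_{\mathcal{A}}\to S'$. Finiteness up to isogeny will then come from Poincar\'e complete reducibility.

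First, because $\Sigma'_{\mathcal{A}}\to S'$ is flat and $S'$ is of finite type, the family of fibers is automatically bounded. After passing to a generically finite cover of $S'$ and stratifying, I may assume that there is a smooth family $\widetilde{\Sigma}'_{\mathcal{A}}\to S'$ giving a simultaneous resolution, together with a morphism $j:\widetilde{\Sigma}'_{\mathcal{A}}\to \mathcal{A}_{S'}$. The relative Albanese $\mathrm{Alb}(\widetilde{\Sigma}'_{\mathcal{A}}/S')$ then exists as a bounded family of abelian varieties, equipped with the relative Albanese morphism $a:\widetilde{\Sigma}'_{\mathcal{A}}\to \mathrm{Alb}(\widetilde{\Sigma}'_{\mathcal{A}}/S')$, and $j$ factors (after translation) as a morphism of abelian varieties $j_*:\mathrm{Alb}(\widetilde{\Sigma}'_{\mathcal{A}_b})\to \mathcal{A}_b$ precomposed with $a_b$.

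Second, for each isogeny type $\lambda$ and each $b\in S''_\lambda$, the composition $p_\lambda\circ j_b:\widetilde{\Sigma}'_{\mathcal{A}_b}\to \mathcal{B}_b$ factors (after translation) through the Albanese: there is a morphism of abelian varieties $q_{\lambda,b}:\mathrm{Alb}(\widetilde{\Sigma}'_{\mathcal{A}_b})\to \mathcal{B}_b$ with $p_\lambda\circ j_b = q_{\lambda,b}\circ a_b$. Its image $C_{\lambda,b}\subset \mathcal{B}_b$ is a sub-abelian variety isogenous to the quotient $Q_{\lambda,b}:=\mathrm{Alb}(\widetilde{\Sigma}'_{\mathcal{A}_b})/K_{\lambda,b}$, where $K_{\lambda,b}:=(\ker q_{\lambda,b})^0$. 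Up to birational transformation, the pair $(p_\lambda(\widetilde{\Sigma}'_{\mathcal{A}_b}),\,p_{\lambda,\Sigma})$ is then replaced by the image of $\widetilde{\Sigma}'_{\mathcal{A}_b}$ in $Q_{\lambda,b}$ together with the composite $\widetilde{\Sigma}'_{\mathcal{A}_b}\xrightarrow{a_b}\mathrm{Alb}(\widetilde{\Sigma}'_{\mathcal{A}_b})\to Q_{\lambda,b}$.

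Third, I invoke Poincar\'e complete reducibility: the Albanese $\mathrm{Alb}(\widetilde{\Sigma}'_{\mathcal{A}_b})$ has, up to isogeny, only finitely many sub-abelian varieties, parameterized by subsets of its simple isogeny factors. After a further stratification of $S'$ making the isogeny decomposition of the relative Albanese locally constant, the possible kernels $K_{\lambda,b}$ and hence the quotients $Q_{\lambda,b}$ fall, as $\lambda$ and $b$ vary, into a finite set of isogeny classes. Since $\widetilde{\Sigma}'_{\mathcal{A}}\to S'$ is bounded and $a$ is canonical, the composed morphisms $\widetilde{\Sigma}'_{\mathcal{A}_b}\to Q_{\lambda,b}$, their images, and hence (by the second step) the pairs $(p_\lambda(\widetilde{\Sigma}'_{\mathcal{A}_b}),\,p_{\lambda,\Sigma})$ fit into a bounded family up to birational transformation.

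The main obstacle is precisely that $\mathcal{B}_b$ is not uniform in $\lambda$, so one cannot bound the target directly; the Albanese factorization bypasses this by routing through an object that is bounded. A secondary subtlety is to verify that isogeny between $Q_{\lambda,b}$ and $C_{\lambda,b}$ preserves the birational type of the image of $\widetilde{\Sigma}'$: the restriction of the isogeny to the image is a finite surjection between varieties of the same dimension $k'$, and one checks that the stabilizer of this image in the (finite) kernel is trivial (it is a finite subgroup of a group generated by the image of the Albanese embedding) so the restriction is in fact birational, as needed.
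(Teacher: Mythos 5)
Your reduction to the Albanese of $\widetilde{\Sigma}'_{\mathcal{A}_b}$ is a reasonable instinct (the source is indeed bounded, and the factorization $p_\lambda\circ j_b=q_{\lambda,b}\circ a_b$ is correct), but the third step contains a genuine gap: finiteness of the \emph{isogeny classes} of the kernels $K_{\lambda,b}$ and quotients $Q_{\lambda,b}$ does not bound the maps $\widetilde{\Sigma}'_{\mathcal{A}_b}\to Q_{\lambda,b}$, nor the birational types of their images. A fixed abelian variety $T$ has countably many abelian subvarieties (they correspond to idempotents in ${\rm End}(T)\otimes\mathbb{Q}$), and even when these all lie in one isogeny class the induced quotient maps restricted to a fixed subvariety $Y\subset T$ can have unbounded degree: take $T=E\times E$, $Y$ a curve, and $K=\Gamma_n$ the graph of multiplication by $n$, so that $T\to T/\Gamma_n\cong E$ is $(x,y)\mapsto y-nx$ and its restriction to $Y$ has degree growing with $n$. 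So the sentence ``Since $\widetilde{\Sigma}'_{\mathcal{A}}\to S'$ is bounded and $a$ is canonical, the composed morphisms \dots fit into a bounded family'' is an assertion, not a proof, and it is false at this level of generality. A second, related problem is the proposed ``further stratification of $S'$ making the isogeny decomposition of the relative Albanese locally constant'': the kernels $K_{\lambda,b}$ exist only for $b$ in the special loci $S''_\lambda$, which form a countable dense union of proper subvarieties along which the endomorphism algebra jumps; stratifying to make the decomposition constant therefore produces countably many strata (one per $\lambda$), which is exactly the non-finiteness you were trying to eliminate. In short, you have correctly identified the obstacle (the $\lambda$-data is discrete and unbounded) but the Albanese route does not remove it.

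The paper's proof supplies the missing mechanism, and it is worth seeing why it cannot be replaced by a purely isogeny-theoretic finiteness statement. Since $\Sigma'_{\mathcal{A}_b}$ has codimension $\geq 2$ in a very general polarized abelian variety, its Gauss map $g_A$ to $G(k',g+1)$ is generically finite, and the diagram (\ref{numerodiag}) exhibits $g_B\circ p_{\lambda,\Sigma}$ as a factorization of $\pi_\lambda\circ g_A$, where $\pi_\lambda$ is the \emph{linear} projection from the point $[T_{E,0}]\in\mathbb{P}(T_{\mathcal{A}_b,0})$. The crucial gain is that these projection centers, though indexed by the countable set of $\lambda$'s, sit densely inside the single irreducible variety $\mathbb{P}(T_{\mathcal{A}/S})$, over which $\pi\circ g_A$ deforms flatly; hence its degree is bounded independently of $\lambda$, and a generically finite map of bounded degree admits only finitely many (families of) factorizations, one of which must be $\Sigma'_{\mathcal{A}_b}\to\Sigma'_{\mathcal{A}_b,p_\lambda}\to G(k',g)$. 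It is this linearization via the Gauss map that converts the discrete, unbounded isogeny data into a continuously varying, bounded parameter; your argument has no counterpart to this step, and without it the uniform bound on ${\rm deg}\,p_{\lambda,\Sigma}$ --- and hence the boundedness claimed in the lemma --- is not established.
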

   \begin{proof} Recall that $\Sigma'_{\mathcal{A}_b,p_\lambda}\subset \mathcal{B}_b$ is a
    proper subvariety of a very general abelian variety of dimension $g$
   with polarization of certain type,
   and $\Sigma'_{\mathcal{A}_b}\subset \mathcal{A}_b$ is the specialization of a subvariety (of codimension at least $2$ by Lemma \ref{lefini}) of a general abelian
   variety of dimension $g+1$ at a point $b$ which is Zariski dense in $S$. In both cases, it follows that the Gauss maps $g_A$ of $\Sigma'_{\mathcal{A}_b}\subset \mathcal{A}_b$ and $g_B$ of $\Sigma'_{\mathcal{A}_b,p_\lambda}\subset \mathcal{B}_b$, which take respective values
    in $G(k',g+1)={\rm Grass}(k',T_{\mathcal{A}_b,0_{\mathcal{A}_b}})$
    and $G(k',g)={\rm Grass}(k',T_{\mathcal{B}_b,0_{\mathcal{B}_b}})$,
     are generically finite on their images.
   We have the commutative diagram

   \begin{eqnarray}\label{numerodiag}
 \label{diagram} \xymatrix{
&\Sigma'_{\mathcal{A}_b}\ar[r]^{g_A}\ar[d]^{p_{\lambda,\Sigma}}& G(k',g+1)\ar[d]^{\pi_\lambda}&\\
&\Sigma'_{\mathcal{A}_b,p_\lambda}\ar[r]^{g_B}& G(k',g)& },
\end{eqnarray}
   where all the maps are rational maps and the rational map $\pi_\lambda:G(k',g+1)\dashrightarrow G(k',g)$ is induced by the
   linear map $dp_\lambda: T_{\mathcal{A}_b,0_{\mathcal{A}_b}}\rightarrow T_{\mathcal{B}_b,0_{\mathcal{B}_b}}$ which is also the quotient map $T_{\mathcal{A}_b,0_{\mathcal{A}_b}}\rightarrow T_{\mathcal{A}_b,0_{\mathcal{A}_b}}/T_{E,0}$. We observe here that the density of the countable union of the $S'_\lambda$ in $S$ has  a stronger version, namely,
   the corresponding points $[T_{E,0}]\in \mathbb{P}(T_{\mathcal{A}_b,0})$ are Zariski dense
   in the projectivized bundle $\mathbb{P}(T_{\mathcal{A}/S})$. The projection $\pi_\lambda$ above is thus generic and the composition $\pi_\lambda\circ g_A$ is generically finite as is $g_A$ and up to shrinking $S'$ if necessary,  its graph deforms in a flat way over the space of  parameters (namely a Zariski open set of $\mathbb{P}(T_{\mathcal{A}/S})$).
   This is now finished because we first restrict to
    the Zariski dense open set $U$  of $\mathbb{P}(T_{\mathcal{A}_{S/B},0})$ where
    the rational map $\pi_\lambda\circ g_A$ is generically finite and its graph deforms in a flat way, and then there are finitely many generically finite covers of $U$
     parameterizing a factorization of the rational  map
   $\pi_\lambda\circ g_A$.
   As the  diagram (\ref{numerodiag}) shows that
   there is a factorization of $\pi_\lambda\circ g_A$ as
   $$\Sigma'_{\mathcal{A}_b}\stackrel{p_\lambda}{\rightarrow }\Sigma'_{\mathcal{A}_b,p_\lambda}\stackrel{g_B}{\rightarrow }G(k',g),$$
    we conclude that all the maps  $\Sigma'_{\mathcal{A}_b}\stackrel{p_{\lambda,\Sigma}}{\rightarrow }\Sigma'_{\mathcal{A}_b,p_\lambda}$ are, up to birational equivalence of the target, members of  finitely many families of generically finite dominant rational maps
    $\psi:\mathcal{A}_b\dashrightarrow Y_b$.
   \end{proof}
   As a corollary, we conclude using the density of the union of  the sets $S'_\lambda$ that
there is, up to replacing $S'$ by a a generically finite cover   of it,
 a family of $k'$-dimensional varieties $\Sigma''_{\mathcal{A}_{S'}}$, together with a dominant generically finite rational map
 \begin{eqnarray}
 \label{eqpfam}p: \Sigma'_{\mathcal{A}_{S'}}\dashrightarrow \Sigma''_{\mathcal{A}_{S'}}\end{eqnarray}

 identifying birationally to $p_{\lambda,\Sigma}:\Sigma'_{\mathcal{A}_{b}}\dashrightarrow
 \Sigma'_{\mathcal{A}_{b,p_\lambda}}$ along each $S'_\lambda$.

 We now finish the proof by contradiction. Assume that $k'=k$. Then $\Sigma'_{\mathcal{A}_b,p_\lambda}$ must be a component $\Gamma_b$ of $\Sigma_{\mathcal{B}_b}$. In particular it does not depend on the elliptic curve $E$. Restricting to a dense  Zariski open set
 $S''$ of $S'$ is necessary, we can assume that we have  desingularizations
 \begin{eqnarray}
 \label{eqpfam1}\widetilde{\Sigma'_{\mathcal{A}_{S'}}}\stackrel{\tilde{p}}{\dashrightarrow } \widetilde{\Sigma''_{\mathcal{A}_{S'}}}\end{eqnarray}
  with smooth fibers over $S''$.
  Let $\tilde{j}:\Sigma'_{\mathcal{A}_b}\rightarrow \mathcal{A}_b$ be the natural map, and consider the morphism
  $$\tilde{p}_*\circ \tilde{j}^*:{\rm Pic}^0(\mathcal{A}_b)\rightarrow {\rm Pic}^0(\widetilde{\Sigma''_{\mathcal{A}_{b}}})$$
  which is a group morphism defined at the general point of $S''$.
  This morphism is nonzero because when $b\in S''_\lambda$ for some $\lambda$, it is injective
   modulo torsion on ${\rm Pic}^0(\mathcal{B}_b)$ (which maps by the  pull-back $p_\lambda^*$ to $ {\rm Pic}^0(\mathcal{A}_b)$
   with finite kernel). Indeed, by the projection formula, denoting by
    $\tilde{j'}:\widetilde{\Sigma''_{\mathcal{A}_{b}}}\rightarrow B$ the natural map, we  have the equality of maps from ${\rm Pic}^0(\mathcal{B}_b)$ to ${\rm Pic}^0(\widetilde{\Sigma''_{\mathcal{A}_{b}}})$:
    $$(\tilde{p}_*\circ \tilde{j}^*)_{\mid {\rm Pic}^0(\mathcal{B}_b)}=
    ((\tilde{p}_{\lambda,\Sigma})_*\circ \tilde{j}^*)_{\mid {\rm Pic}^0(\mathcal{B}_b)}=
    ((\tilde{p}_{\lambda,\Sigma})_*\circ(\tilde{p}_{\lambda,\Sigma})^*\circ \tilde{j'}^*={\rm deg}\,p_{\lambda,\Sigma} \,\tilde{j'}^*.$$
    We note here that the morphism $\tilde{j'}^*:{\rm Pic}^0(\mathcal{B}_b)\rightarrow{\rm Pic}^0(\widetilde{\Sigma''_{\mathcal{A}_{b}}})$ has finite kernel because
    ${\rm dim}\,{\rm Im}\,\tilde{j'}=k>0$.
      As the abelian variety
${\rm Pic}^0(\mathcal{A}_b)$ is  simple at the very general point of $S''$, the nonzero morphism
$(\tilde{p}_{\lambda,\Sigma})_*\circ \tilde{j}^*$ must be  injective. But then, by specializing at a point $b$ of $S''_\lambda$, where $\lambda$ is chosen in such a way that
 $S''_\lambda=S''\cap S'_\lambda$ is non-empty, we find that this morphism  is injective on
the component ${\rm Pic}^0(E_b)$ of ${\rm Pic}^0(\mathcal{A}_b)$. We can now fix the abelian variety  $\mathcal{B}_b$ and deform the elliptic curve $E_b$.
We then get  a contradiction, because we know that the variety $\widetilde{\Sigma''_{\mathcal{A}_{b}}}$ depends (at least birationally)  only on $\mathcal{B}_b$ and not
 on $E_b$, so that its Picard variety cannot contain
a variable elliptic curve $E_b$.
\end{proof}

\section{Proof of Theorems \ref{theochowring} and \ref{coroorbit}, (iv)\label{secproofcoriv}}
\subsection{\label{secdimestimate}Dimension estimate}
Recall that for an abelian variety $A$ and a nonnegative integer $k$, we denote by
$A_k\subset A$ the set of points $x\in A$ such that
$(\{x\}-\{0_A\})^{*k}=0$ in ${\rm CH}_0(A)$. The following proves item (i) of Theorem \ref{theochowring}:
\begin{prop}\label{proestimatechowring} For $k>0$, one has ${\rm dim}\, A_k\leq k-1$.
\end{prop}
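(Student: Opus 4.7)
I would argue by induction on $k$, using the fact that the condition $a_x^{*k}=0$ (with $a_x := \{x\}-\{0_A\}$) has a clean translation-theoretic interpretation, combined with the orbit-dimension bound from Theorem \ref{coroorbit}(i).

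\emph{Base case} $k=1$. Since the Albanese morphism of an abelian variety is the identity, the equality $\{x\}=\{0_A\}$ in $\mathrm{CH}_0(A)$ forces $x=0_A$. Hence $A_1=\{0_A\}$ has dimension $0=k-1$.

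\emph{Inductive step.} Assume $\dim A_{k-1}\leq k-2$. For any $z\in \mathrm{CH}_0(A)$, the Pontryagin identity $\{x\}\ast z = T_{x\ast}z$ (where $T_x$ is translation by $x$) gives $a_x\ast z = T_{x\ast}z-z$. Applying this to $z=W(x):=a_x^{\ast(k-1)}$, the condition $a_x^{\ast k}=0$ becomes $T_{x\ast}W(x)=W(x)$. Thus $x\in A_k$ iff $W(x)$ is fixed under translation by $x$. If $W(x)=0$ then $x\in A_{k-1}$, which by induction is a countable union of closed subvarieties of dimension $\leq k-2$; it therefore suffices to bound the dimension of an irreducible component $V\subset A_k$ with $V\not\subset A_{k-1}$, so that $W(x)\neq 0$ at the general point of $V$.

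On this open part, the strategy is to extract an effective zero-cycle to which Theorem \ref{coroorbit}(i) applies. Expanding $(\{x\}-\{0_A\})^{\ast k}=0$ as a relation between effective cycles gives $P(x)\sim N(x)$ in $A^{(2^{k-1})}$, where
\[
P(x)=\!\!\!\sum_{k\equiv j(2)}\!\!\binom{k}{j}\{jx\},\qquad N(x)=\!\!\!\sum_{k\not\equiv j(2)}\!\!\binom{k}{j}\{jx\}.
\]
The naive application of Theorem \ref{coroorbit}(i) to the orbit of $P(x)$ gives only $\dim V\leq 2^{k-1}-1$. To sharpen this to $k-1$, one uses the translation-invariance statement: the stabilizer of $W(x)$ in $A$ contains $x$, and as $x$ moves through $V$ the family of cycles $W(x)$ inherits an inductive polynomial structure in $x$ (it is $a_x^{\ast(k-1)}$, whose support $\{jx:0\leq j\leq k-1\}$ lies in $\overline{\langle x\rangle}$). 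Restriction to the abelian subvariety $\overline{\langle x\rangle}\subsetneq A$ (when this is proper) reduces to the inductive hypothesis applied to a lower-dimensional abelian variety; the generic case $\overline{\langle x\rangle}=A$ must be handled by combining the relation $T_{x\ast}W(x)=W(x)$ with the effective-orbit bound for $W(x)$ itself (of degree $2^{k-2}$), or else by routing through the Fourier dual picture of Lemma \ref{lefourier} and bounding $\widehat{A}_k=\{D\in\widehat{A}:D^k=0\text{ in }\mathrm{CH}^k(A)\}$ by analyzing the polynomial map $D\mapsto D^k$.

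\emph{Main obstacle.} The crux is precisely promoting the bound $2^{k-1}-1$ (immediate from Theorem \ref{coroorbit}(i)) to the sharp $k-1$. The gap is bridged only by exploiting the multiplicative Pontryagin structure: namely the recursive relation $a_x^{\ast k}=a_x\ast W(x)$, together with the fact that as $x$ varies in $V$ the cycles $W(x)$ are not arbitrary effective cycles in $A^{(2^{k-2})}\times A^{(2^{k-2})}$ but are constrained to come from the single-variable polynomial $(\{x\}-\{0_A\})^{\ast(k-1)}$. Making this rigidity quantitative, either directly on the Pontryagin side or after Fourier transform on the Chow-ring side, is the technical heart of the proof.
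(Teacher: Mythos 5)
Your proposal does not reach the stated bound: you yourself flag that the route through effective cycles and Theorem \ref{coroorbit}(i) only yields $\dim A_k\leq 2^{k-1}-1$, and the passage from $2^{k-1}-1$ to $k-1$ --- which you correctly identify as the heart of the matter --- is left as an unproved ``rigidity'' statement rather than carried out. In fact even the weaker bound is not properly established as written: for a fixed $x$ the relation $P(x)\sim N(x)$ only says that the orbit of $P(x)$ contains two specified points, and as $x$ varies over a component $V\subset A_k$ the rational equivalence classes of $P(x)$ vary as well, so there is no single positive-dimensional orbit in $A^{(2^{k-1})}$ to which Theorem \ref{coroorbit}(i) applies. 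The reformulation $T_{x*}W(x)=W(x)$ is correct but is never exploited in a way that produces a dimension bound, and the reduction to $\overline{\langle x\rangle}$ does not help precisely in the generic case $\overline{\langle x\rangle}=A$, which you leave open.

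The missing idea is to work with holomorphic forms rather than with orbits. Write $(\{x\}-\{0_A\})^{*k}=\sum_{i=0}^k(-1)^{k-i}\binom{k}{i}\{ix\}$ and let $\Gamma^{Pont}_k=\sum_{i}(-1)^{k-i}\binom{k}{i}\Gamma_{i}$ be the corresponding self-correspondence of $A$, where $\Gamma_{i}$ is the graph of multiplication by $i$. Since $m_i^*\eta=i^d\eta$ for a holomorphic $d$-form $\eta$, the finite-difference identities $\sum_{i=0}^k(-1)^{k-i}\binom{k}{i}i^d=0$ for $d<k$ and $=k!$ for $d=k$ give $(\Gamma^{Pont}_k)^*\eta=0$ in degree $d<k$ and $(\Gamma^{Pont}_k)^*\eta=k!\,\eta$ in degree $k$. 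Mumford's theorem, applied to the family of zero-cycles $(\Gamma^{Pont}_k)_*(x)$ which vanish identically for $x\in A_k$, shows that $(\Gamma^{Pont}_k)^*\eta$ restricts to zero on the regular locus of $A_k$ for every holomorphic form $\eta$ of positive degree; taking $d=k$ forces every holomorphic $k$-form on $A$ to vanish on $A_k$, hence $\dim A_k\leq k-1$. This is a direct, non-inductive argument that bypasses the $2^{k-1}$ blow-up entirely.
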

\begin{proof} Let $g:={\rm dim}\,A$ and let $\Gamma^{Pont}_k$ be the codimension $g$ cycle of
$A\times A$ such that
 $$(\Gamma^{Pont}_{k })_*(x)=(\{x\}-\{0_A\})^{*k}.$$

for any $x\in A$. As $(\{x\}-\{0_A\})^{*k}=\sum_{i=0}^k(-1)^{k-i}\binom{k}{i}\{ix\}$, we can take
\begin{eqnarray}\label{eqstarkx}\Gamma^{Pont}_k=\sum_{i=0}^k(-1)^{k-i}\binom{k}{i}\Gamma_{i},
\end{eqnarray}
where $\Gamma_i\subset A\times A$  is the graph of the map $m_i$ of multiplication by $i$.
  Let us compute $(\Gamma_k^{Pont})^*\eta$ for any holomorphic form on $A$.
\begin{lemm} \label{lekform}
One has  $(\Gamma^{Pont}_k)^*\eta=0$ for any holomorphic form
$\eta$ of  degree $<k$ on $A$, and
 $(\Gamma_k^{Pont})^*\eta=k!\eta$ for a holomorphic form of degree $k$ on $A$.
\end{lemm}
\begin{proof} Indeed $m_i^*\eta=i^d\eta$, where $d={\rm deg}\,\eta$.
By (\ref{eqstarkx}), the lemma is thus equivalent to

(i) $\sum_{i=0}^k(-1)^{k-i}\binom{k}{i} i^d=0,\,d<k$,

(ii) $\sum_{i=0}^k(-1)^{k-i}\binom{k}{i} i^k=k!$.

From the formula  $\sum_{i=0}^k(-1)^{k-i}\binom{k}{i}X^i=(X-1)^k$, we get that the $d$-th derivative of the polynomial
$\sum_{i=0}^k(-1)^{k-i}\binom{k}{i}X^i$ at $1$ is $0$ for $d<k$, and is equal to $k!$ for $d=k$. This immediately implies (i)
by induction on $d$, using the fact that
$i(i-1)\ldots(i-d+1)-i^d$ is a degree $d-1$ polynomial in
$i$,  and then (ii) by the same argument.
\end{proof}
This lemma directly  implies Proposition \ref{proestimatechowring}. Indeed,
by Mumford's theorem \cite{mumford}, one has $(\Gamma^{Pont}_k)^*\eta_{\mid A_k}=0$ for any holomorphic form
$\eta$ of positive degree, and in particular for any holomorphic  $k$-form. By Lemma \ref{lekform},
we conclude that, denoting by $A_{k,reg}\subset A_k$ the regular locus of $A_k$, $\eta_{\mid A_k}=0$ for any holomorphic form
$\eta$ of  degree $k$
on $A$, that is, ${\rm dim}\,A_k<k$.
\end{proof}

\subsection{\label{secnatdef} Proof of Theorem  \ref{theochowring}}
The following result is  almost obvious:
\begin{lemm} \label{leAknat} For any integer $k\geq 0$, the set $A_k\subset A$ defined in (\ref{eqAk}) is naturally defined
in the sense of Definition \ref{definat}.
\end{lemm}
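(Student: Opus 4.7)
The plan is to verify the three axioms (0), (i), (ii) of Definition \ref{definat} for $\Sigma_A := A_k$. Axiom (i) is the genuinely structural point, while (0) and (ii) are standard consequences of the existence of relative Chow varieties.

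For axiom (i), the key observation is that for a group morphism $f:A\to B$, the pushforward $f_*:{\rm CH}(A)\to {\rm CH}(B)$ is a ring homomorphism for the Pontryagin product. Indeed, if $\mu_A,\mu_B$ denote the sum maps, then $\mu_B\circ(f\times f)=f\circ\mu_A$, whence for $z,z'\in{\rm CH}(A)$
\[
f_*(z*_A z')=f_*\mu_{A*}(z\times z')=\mu_{B*}(f_*z\times f_*z')=(f_*z)*_B(f_*z').
\]
Combined with $f_*\{x\}=\{f(x)\}$ and $f_*\{0_A\}=\{0_B\}$, this gives
\[
f_*\bigl((\{x\}-\{0_A\})^{*k}\bigr)=(\{f(x)\}-\{0_B\})^{*k},
\]
so $x\in A_k$ implies $f(x)\in B_k$, i.e.\ $f(A_k)\subset B_k$.

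For axioms (0) and (ii), the point is that the cycle class $(\{x\}-\{0_A\})^{*k}$ depends algebraically on $x$: by formula (\ref{eqstarkx}) it is the restriction to $\{x\}\times A$ of the relative $0$-cycle $\Gamma^{Pont}_k=\sum_{i=0}^k(-1)^{k-i}\binom{k}{i}\Gamma_i$ on $A\times A$, and this construction is compatible with base change, so for any family $\pi:\mathcal{A}\to S$ one obtains a relative family of $0$-cycles $\mathcal{Z}\subset\mathcal{A}\times_S\mathcal{A}$ whose fiber over $x\in\mathcal{A}_b$ is $(\{x\}-\{0_{\mathcal{A}_b}\})^{*k}$ in ${\rm CH}_0(\mathcal{A}_b)$. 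The condition that a relative effective $0$-cycle of fixed degree be rationally equivalent to another such cycle on the fiber is cut out by the countably many components of the relative Chow variety (or equivalently, parameterized by morphisms from families of $\mathbb{P}^1$'s); hence $\{x\in\mathcal{A}\mid (\{x\}-\{0_{\mathcal{A}_b}\})^{*k}=0\}$ is a countable union of closed algebraic subsets of $\mathcal{A}$, whose fiber over $b\in S$ is exactly $(\mathcal{A}_b)_k$. Taking $S$ a point gives (0), and the general case gives (ii).

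The only potential subtlety is writing the vanishing condition $(\{x\}-\{0_A\})^{*k}=0$ as an algebraic family of conditions on $x$; but once one expands via (\ref{eqstarkx}) into a difference of two effective $0$-cycles of bounded degree depending algebraically on $x$, the standard countability properties of relative Chow varieties handle the rest. No single step is serious — the lemma really is, as the author says, almost obvious.
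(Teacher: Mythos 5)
Your proof is correct and follows essentially the same route as the paper: compatibility of $f_*$ with the Pontryagin product for axiom (i), and the algebraic dependence of $(\{x\}-\{0_A\})^{*k}$ on $x$ together with the countability properties of relative Chow varieties for axioms (0) and (ii). You merely spell out details (the identity $\mu_B\circ(f\times f)=f\circ\mu_A$, the relative correspondence $\Gamma^{Pont}_k$) that the paper leaves implicit.
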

\begin{proof} It is known that the set $A_k\subset A$ is a countable union of closed algebraic subsets.
Using the fact that for a morphism $f:A\rightarrow B$ of abelian varieties,
$$f_*:{\rm CH}_0(A)\rightarrow {\rm CH}_0(B)$$
is compatible with the Pontryagin product, we conclude that
$f_*(A_k)\subset B_k$. Finally, given a family
$\pi:\mathcal{A}\rightarrow S$ of abelian varieties, the set
of points $x\in \mathcal{A}$ such that $(\{x\}-\{0_{\mathcal{A}_b}\})^{*k}=0$ in
${\rm CH}_0(\mathcal{A}_b)$, $b=\pi(x)$, is a countable union of closed algebraic subsets of
$\mathcal{A}$ whose fiber over $b\in S$ coincides set-theoretically with $\mathcal{A}_{b,k}$.
\end{proof}
\begin{proof}[Proof of Theorem \ref{theochowring}] The theorem follows from Proposition  \ref{proestimatechowring}, Lemma
\ref{leAknat}, and Theorem \ref{thegenpirola}.
\end{proof}

\subsection{Proof of Theorem \ref{coroorbit}, (iv)\label{sectheoorbitoA}}
We first prove the following Proposition (cf. Proposition \ref{theorelation}).
\begin{prop}\label{theorelation2} Let $A$ be an abelian variety and let $x_1,\ldots,\,x_k\in A$ such that
\begin{eqnarray}\label{eqhypo}\sum_i\{x_i\}=k\{0_A\}\,\,{\rm  in }\,\,{\rm CH}_0(A).
\end{eqnarray} Then
\begin{eqnarray}\label{eqpoutheorrelation} (\{x_i\}-\{0_A\})^{*k}=0 \,\,{\rm in}\,\,{\rm CH}_0(A)
\end{eqnarray}
for $i=1,\ldots,\,k$.

\end{prop}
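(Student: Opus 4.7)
My plan is to exploit the structure of the commutative $\mathbb{Q}$-algebra $(\mathrm{CH}_0(A), *)$ and reduce the claim to a formal Newton-Girard argument. Setting $b_i := \{x_i\} - \{0_A\}$, the hypothesis becomes $\sum_i b_i = 0$, i.e.\ the first Pontryagin power sum $p_1 = 0$. The heart of the proof will be to show that every higher power sum $p_m := \sum_{i=1}^k b_i^{*m}$ vanishes for $1 \le m \le k$. Once this is granted, the Newton-Girard identities (valid formally in any commutative $\mathbb{Q}$-algebra, which is why the $\mathbb{Q}$-coefficients are essential) force all elementary symmetric functions $e_m(b_1,\ldots,b_k)$ to vanish for $1 \le m \le k$, by the usual induction $m e_m = (-1)^{m-1}(p_m - e_1 p_{m-1} + \cdots)$. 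Then the universal factorization
\[
\prod_{j=1}^{k}(t - b_j) \;=\; t^k - e_1 t^{k-1} + \cdots + (-1)^k e_k \;=\; t^k
\]
holds in $\mathrm{CH}_0(A)[t]$, and evaluating at $t = b_i$ (so that the factor $b_i - b_i$ kills the left side) immediately gives $b_i^{*k} = 0$, i.e.\ $(\{x_i\} - \{0_A\})^{*k} = 0$ as required.

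The key computational step, and the place where the geometric hypothesis really enters, is the vanishing of $p_m$. It follows from the observation that the hypothesis is preserved by push-forward along any group homomorphism. Applying the multiplication map $[j]: A \to A$ for $j \in \mathbb{Z}_{\ge 0}$, and using $[j]_* \{x_i\} = \{j x_i\}$ together with $[j]_* \{0_A\} = \{0_A\}$, yields $\sum_i \{j x_i\} = k\{0_A\}$ in $\mathrm{CH}_0(A)$ for every $j$. Expanding $b_i^{*m}$ by the binomial formula for the Pontryagin product gives $b_i^{*m} = \sum_{j=0}^m (-1)^{m-j}\binom{m}{j}\{j x_i\}$, and summing over $i$:
\[
p_m \;=\; \sum_{j=0}^m (-1)^{m-j}\binom{m}{j} \Bigl(\sum_i \{j x_i\}\Bigr) \;=\; k\{0_A\} \cdot (1-1)^m \;=\; 0, \quad m \ge 1.
\]

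I do not foresee a serious obstacle: commutativity of the Pontryagin product on $\mathrm{CH}_0(A)$ and the functoriality of push-forward for rational equivalence are standard, and the $\mathbb{Q}$-coefficients allow us to invert the integer $m$ that appears in the Newton recursion. The one thing worth checking is that the polynomial evaluation trick at $t = b_i$ is genuinely valid: one only needs the identity $\prod_j (t-b_j) = \sum_{m=0}^k (-1)^m e_m t^{k-m}$ in the polynomial ring over the commutative ring $(\mathrm{CH}_0(A),*)$, which is again formal. The whole argument is quite short, and its main content lies in the clever observation that push-forward by $[j]$ produces, for free, the power-sum identities required for Newton-Girard to collapse everything to $b_i^{*k} = 0$.
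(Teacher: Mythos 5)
Your proof is correct. The key computation --- pushing the relation $\sum_i\{x_i\}=k\{0_A\}$ forward under the multiplication maps $m_j$ to get $\sum_i\{jx_i\}=k\{0_A\}$ for all $j\geq 0$, hence the vanishing of every Pontryagin power sum $p_m=\sum_i(\{x_i\}-\{0_A\})^{*m}$ for $m\geq 1$ --- is sound, the Newton--Girard step is a universal identity in any commutative $\mathbb{Q}$-algebra (and the $\mathbb{Q}$-coefficients on ${\rm CH}_0$ are exactly what lets you divide by $m$), and evaluating $\prod_j(t-b_j)=t^{k}$ at $t=b_i$ is legitimate since $({\rm CH}_0(A),*)$ is commutative and unital with unit $\{0_A\}$. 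The paper's argument lives in the same circle of ideas but is organized differently: it forms the elementary symmetric functions $\gamma_l$ of the $k-1$ classes $\{x_2\},\ldots,\{x_k\}$ (not of the differences $b_i$), uses the same pushforward trick inside a Newton-type recursion to show each $\gamma_l$ is a polynomial in $\{x_1\}$, and then exploits the vacuous vanishing $\gamma_k=0$ to extract a nontrivial degree-$k$ polynomial relation satisfied by $\{x_1\}-\{0_A\}$; converting that relation into $(\{x_1\}-\{0_A\})^{*k}=0$ then requires Bloch's nilpotency theorem $(\{x_1\}-\{0_A\})^{*(g+1)}=0$. Your version is symmetric in all $k$ points, makes all the power sums vanish on the nose, and --- because one factor of $\prod_j(b_i-b_j)$ is literally zero --- dispenses with the appeal to nilpotency altogether, which is a small but genuine simplification.
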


\begin{proof}
Let $\gamma_l:=\sum_{|I|=l, I\subset \{2,\ldots,k\}}\{x_I\}$, where
$x_I:=\sum_{i\in I}x_i$. Then by (\ref{eqhypo}), we have \begin{eqnarray}\label{eqhypoprime}\gamma_1=\sum_{i=2}^{k}\{x_i\}=-\{x_1\}+k\{0_A\}.
\end{eqnarray} Furthermore,
$\gamma_l=0$ for $l\geq k$ and
the following inductive relation is obvious:
\begin{eqnarray}\label{eqpoutheorrelation1}(\sum_{i=2}^{k}\{x_i\})*\gamma_l=
(l+1)\gamma_{l+1}+((m_2)_*\gamma_1)*\gamma_{l-1}-((m_3)_*\gamma_1)*\gamma_{l-2}+\ldots,
\end{eqnarray}
that is:

\begin{eqnarray}\label{eqpoutheorrelation2} (l+1)\gamma_{l+1}=\sum_{i=0}^{l}(-1)^i((m_{i+1})_*\gamma_1)*\gamma_{l-i},
\end{eqnarray}
where by (\ref{eqhypoprime}), $(m_{i+1})_*\gamma_1=-\{(i+1)x_1\}+k\{0_A\}$.
Formula (\ref{eqpoutheorrelation2}) implies inductively  that
\begin{eqnarray}\label{eqpoutheorrelation3}\gamma_{l}=\sum_{i=0}^l\alpha_{l,i} \{i x_1\}\end{eqnarray} for some rational  nonzero coefficients $\alpha_{l,i}$. As
$$(\{x_1\}-\{0_A\})^{*i}=\sum_{j=0}^i(-1)^{i-j}\binom{i}{j} \{jx_1\},$$
 the $0$-cycles $\{jx_1\},\,0\leq j\leq l$ and $(\{x_1\}-\{0_A\})^{*j},\,0\leq j\leq l$
generate the same subgroup of ${\rm CH}_0(A)$.
The relation
$\gamma_k=0$ thus provides a nontrivial  degree $k$ linear relation  with $\mathbb{Q}$-coefficients between the
$0$-cycles
$$\{0_A\},\,\{x_1\}-\{0_A\},\,(\{x_1\}-\{0_A\})^{*k},$$
or equivalently a polynomial relation in the variable $\{x_1\}-\{0_A\}$ for the Pontryagin product, where
the scalars are mapped to $\mathbb{Q}\{0_A\}$.
As we know by \cite{bloch} that $(\{x_1\}-\{0_A\})^{*g+1}=0$, we conclude that
$(\{x_1\}-\{0_A\})^{*k}=0$.
\end{proof}
 Proposition \ref{theorelation2} says that if $\{x_1\}+\ldots+\{x_k\}=k\{0_A\}$ in ${\rm CH}_0(A)$, then
$x_i\in A_k$. The locus swept-out by the orbit $|k\{0_A\}|$ is thus contained in $A_k$. We thus deduce
 from Theorem \ref{theochowring} the following corollary:
 \begin{coro} (Cf. Theorem \ref{coroorbit}, (iv)) For any abelian variety $A$, the locus swept-out by the orbit $|k\{0_A\}|$
 has dimension $\leq k-1$. For   a very general abelian variety  $A$ of dimension $g\geq 2k-1$,  the orbit $|k\{0_A\}|$ is countable.
 \end{coro}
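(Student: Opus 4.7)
The plan is to deduce the corollary directly from Proposition \ref{theorelation2} together with Theorem \ref{theochowring}, by studying the locus in $A$ swept-out by the points appearing in cycles rationally equivalent to $k\{0_A\}$.

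First I would observe that if $Z = \{x_1\} + \ldots + \{x_k\}$ is any effective $0$-cycle of degree $k$ rationally equivalent to $k\{0_A\}$, then by Proposition \ref{theorelation2} applied to the tuple $(x_1,\ldots,x_k)$, each of the points $x_i$ lies in $A_k$. Hence the union $L_k\subset A$ of the supports of all cycles in $|k\{0_A\}|$ is contained in $A_k$. Applying Theorem \ref{theochowring}(i), this yields the dimension bound $\dim L_k \leq k-1$, which gives the first statement.

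For the second statement, I would invoke Theorem \ref{theochowring}(ii): when $A$ is very general of dimension $g\geq 2k-1$, the set $A_k$ is countable, hence so is $L_k$. It then remains to upgrade the countability of the sweep $L_k\subset A$ to countability of the orbit $|k\{0_A\}|\subset A^{(k)}$. This is a routine point: any cycle in $|k\{0_A\}|$ is an unordered $k$-tuple of points of $L_k$, so the orbit embeds into the $k$-th symmetric product of a countable set, which is itself countable.

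There is no genuine obstacle beyond marshalling the two earlier results, since all the technical work has been isolated in Proposition \ref{theorelation2} (the Pontryagin-product relation forcing $x_i \in A_k$) and in Theorem \ref{theochowring}(ii) (the Pirola-type countability argument via naturally defined subsets). The only point requiring a moment of care is the passage from the locus $L_k$ to the orbit itself, to be sure that countability is preserved; but this is immediate from the description of the orbit in terms of unordered $k$-tuples in $L_k$.
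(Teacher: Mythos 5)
Your proposal is correct and follows exactly the paper's own route: Proposition \ref{theorelation2} places the support of every cycle in $|k\{0_A\}|$ inside $A_k$, and Theorem \ref{theochowring} then gives both the dimension bound and, for $g\geq 2k-1$, the countability. The only detail you add beyond the paper's terse deduction is the (correct and routine) remark that countability of the swept-out locus implies countability of the orbit via the symmetric product.
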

 In this statement, the locus swept-out by the orbit $|k\{0_A\}|$ is the set of points $x\in A$ such that
 a cycle $x+Z'$ with $Z'$ effective of degree $k-1$ belongs to $|k\{0_A\}|$. The dimension of this locus can be much smaller than the dimension of the orbit itself, as shown by the examples of orbits contained in subvarieties $C^{(k)}\subset A^{(k)}$ for some curve $C$.

 \section{Proof of Theorem \ref{coroorbit}, (i)\label{secdimorbit}}
We give in this section  the proof of item (i) in  Theorem \ref{coroorbit}. We first recall the statement:
\begin{theo}\label{ledimorbit}  Let $A$ be an abelian variety. The dimension of any orbit
$|Z|\subset A^{(k)}$ for rational equivalence is at most $k-1$.
\end{theo}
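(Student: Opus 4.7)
The plan is to adapt the strategy of Proposition \ref{proestimatechowring}, working on the orbit $|Z|\subset A^{(k)}$ rather than on $A_k\subset A$; Mumford's theorem will provide the vanishing of form pullbacks, and a linear-algebra step closes out the dimension estimate.

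First, I would pick a smooth dense Zariski open $T$ in a top-dimensional irreducible component of $|Z|$, of dimension $d:=\dim |Z|$. After passing to a finite \'etale cover trivialising the labelling of the points in the support, one obtains morphisms $y_1,\dots,y_k:T\to A$ with $Z_t = \sum_i \{y_i(t)\}$ and all $Z_t$'s rationally equivalent in $\mathrm{CH}_0(A)$. The universal $0$-cycle $\mathcal{Z} \subset T\times A$ acts on holomorphic forms by $\eta\mapsto\sum_i y_i^*\eta$, and Mumford's theorem (the same tool as in Proposition \ref{proestimatechowring}, cf.\ Lemma \ref{lekform}) yields
\[
\sum_{i=1}^k y_i^*\eta \;=\; 0 \quad \text{in } H^0(T,\Omega^p_T), \qquad \forall\, \eta\in H^0(A,\Omega_A^p),\ p\geq 1.
\]
Next, pass to tangent spaces at a general $t\in T$ where the $y_i(t)$'s are pairwise distinct. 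Setting $V:=T_{A,0}$, $W:=T_{T,t}$, and identifying each $T_{A,y_i(t)}$ with $V$ via translation, let $\phi_i := dy_i(t): W\to V$. The Mumford vanishing becomes the family of identities
\[
\sum_{i=1}^k \wedge^p \phi_i \;=\; 0 \quad \text{in } \mathrm{Hom}(\wedge^p W,\wedge^p V), \qquad \forall\, p\geq 1.
\]
At such a general $t$, $T_{A^{(k)},[Z_t]}$ identifies canonically with $V^k$, and the injection $T\hookrightarrow A^{(k)}$ forces the combined differential $\Phi:=(\phi_1,\dots,\phi_k):W\to V^k$ to be injective.

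The theorem is then reduced to the linear-algebraic claim that these identities together with injectivity of $\Phi$ imply $\dim W\leq k-1$. The $p=1$ identity places $\Phi(W)$ in the hyperplane $\{\sum v_i=0\}\cong V^{k-1}$, giving only the coarse bound $d\leq(k-1)g$; the sharp bound requires the higher-$p$ identities. I would first settle the rank-one case, in which each $\phi_i=f_i\otimes v_i$ for $f_i\in W^*,\, v_i\in V$: then $\wedge^p\phi_i=0$ automatically for $p\geq 2$, only the $p=1$ identity $\sum_i f_i\otimes v_i=0$ persists, and a direct count using injectivity of $\Phi$ yields the tight bound $\dim W\leq k-1$ (matching the elliptic-curve example $\mathbb{P}^{k-1}=|D|\subset E^{(k)}$). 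For the general case, I would interpret the higher identities as the vanishing of the ``power sums'' of the $\phi_i$'s in the graded $\lambda$-ring $\bigoplus_p\mathrm{Hom}(\wedge^p W,\wedge^p V)$, and conclude via a Newton-identity-type argument relating power sums to elementary symmetric functions.

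The main obstacle is exactly this last step in full generality: converting the $\lambda$-ring power-sum identities into the bound $\dim W\leq k-1$. A cleaner, more geometric alternative would be to construct a correspondence $\Gamma$ on $|Z|\times A^{(k)}$ playing the role of $\Gamma_k^{\mathrm{Pont}}$ in Proposition \ref{proestimatechowring}: a cycle that vanishes on $|Z|$ by rational equivalence while acting as a nonzero scalar multiple of the identity on a suitable piece of $H^0(A^{(k)},\Omega^k)$; the existence of such a $\Gamma$ would mechanise the conclusion $\dim|Z|<k$ in exact analogy with that proposition.
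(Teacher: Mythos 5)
Your reduction is exactly the paper's: Mumford's theorem applied along (the inverse image in $A^k$ of) the orbit yields the vanishing of $\sum_{j}pr_j^*\alpha$ on the tangent space $V\subset W^k$ (with $W=T_{A,0}$) for every $\alpha\in\bigwedge^{i}W^*$, $i>0$, and the theorem becomes the purely linear-algebraic claim that any such subspace has dimension at most $k-1$ (Proposition \ref{ledimorbitinfinitesimal} in the paper). But that claim is the entire difficulty, and you do not prove it. You settle only the case where every $\phi_i$ has rank one, where all the identities with $p\geq 2$ are vacuous; the general case does not reduce to that one. Your proposed ``Newton-identity'' route is not obviously viable: the identities $\sum_i\bigwedge^p\phi_i=0$ involve exterior powers of $k$ distinct linear maps between two different vector spaces, not power sums of commuting elements of a commutative ring, so there is no evident mechanism converting them into relations among elementary symmetric functions, nor is it clear what dimension bound such relations would yield. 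The alternative correspondence $\Gamma$ on $|Z|\times A^{(k)}$ is likewise only wished for, not constructed.

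For the record, here is how the paper closes this gap, since it is the real content of the proof. The set of $r$-planes $V\subset W^k$ satisfying the vanishing condition is closed in the Grassmannian and invariant under the diagonal action of ${\rm Aut}\,W$, so by degenerating under a generic one-parameter subgroup $\mathbb{C}^*\subset {\rm Aut}\,W$ one may assume $V$ is a torus-fixed point, i.e.\ $V=\langle A_1e_1,\dots,A_ne_n\rangle$ with $A_i\subset(\mathbb{C}^k)^*$ and $r=\sum_i\dim A_i$. The vanishing condition then becomes the combinatorial condition (**) that all coordinatewise products $\lambda_1\cdots\lambda_s$, $\lambda_l\in A_{i_l}$, pair to zero with $e=(1,\dots,1)$. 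The bound $\sum_i\dim A_i\leq k-1$ is proved by induction on $n$, replacing $A_{n-1},A_n$ by $A_{n-1}\cdot A_n+A_{n-1}+A_n$; the key point (Lemma \ref{ledeuxA}) is that this replacement does not decrease the total dimension, and its proof --- finiteness of the fibres of the coordinatewise multiplication map near $(e,e)$ --- rests on a delicate Laurent-expansion argument (Claim \ref{claim}). None of these steps is routine, so the proposal as it stands has a genuine gap at its central step.
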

\begin{proof}
We will rather work with the inverse image $\widetilde{|Z|}$ of the orbit $|Z|$ in $A^k$. By Mumford's theorem \cite{mumford},
for any holomorphic $i$-form $\alpha$ on $A$ with $i>0$, one has, along the regular locus $\widetilde{|Z|}_{reg}$ of
$\widetilde{|Z|}$:
\begin{eqnarray}
\label{eqmumford} \sum_{j=1^k}pr_j^*\alpha_{\mid \widetilde{|Z|}_{reg}}=0,
\end{eqnarray}
where the $pr_j:A^k\rightarrow A$ are the various  projections.
 Let
$x=(x_1,\ldots,x_k)\in \widetilde{|Z|}_{reg}$ and let $V:=T_{\widetilde{|Z|}_{reg},x}\subset W^k$, where
$W=T_{A,x}=T_{A,0_A}$. One has ${\rm dim}\,V={\rm dim}\,|Z|$,
and (\ref{eqmumford}) says that:

\vspace{0.5cm}

(*) {\it  for any $\alpha\in \bigwedge^iW^*$ with $i>0$, one has
$(\sum_jpr_j^*\alpha)_{\mid V}=0$.}

\vspace{0.5cm}

Theorem \ref{ledimorbit} thus follows from the following proposition \ref{ledimorbitinfinitesimal}.
\end{proof}
\begin{prop}\label{ledimorbitinfinitesimal} Let $W$ be a vector space, $V\subset W^k$ be a vector subspace
satisfying property (*). Then ${\rm dim }\,V\leq k-1$.
\end{prop}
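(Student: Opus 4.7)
My plan is to prove the proposition by induction on $k$. The base case $k=1$ is immediate: condition (*) with $i=1$ applied to any $\alpha\in W^*$ gives $pr_1^*\alpha|_V=\alpha\circ\pi_1=0$, forcing $\pi_1=0$; since $\pi_1$ is the inclusion $V\hookrightarrow W^1=W$, we get $V=0$ and $\dim V\le 0=k-1$.

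For the inductive step I first restate (*) in dual form: for each $i\ge1$ and tuple $v_1,\ldots,v_i\in V$,
\[
\sum_{j=1}^k \pi_j(v_1)\wedge\cdots\wedge\pi_j(v_i)=0\quad\text{in }\bigwedge^i W,
\]
where $\pi_j:=pr_j|_V$. The degree-$1$ case gives $\sum_j\pi_j=0$. I then consider the kernel $K:=\ker(\pi_k)\subset V$. Any $v\in K$ has $v^k=0$, so the projection to the first $k-1$ coordinates embeds $K$ injectively into $W^{k-1}$; the degree-$i$ identity restricted to $v_1,\ldots,v_i\in K$ loses only its (automatically zero) $k$-th wedge term, hence $K\subset W^{k-1}$ satisfies (*) for $k-1$ in place of $k$. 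By the inductive hypothesis $\dim K\le k-2$.

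The remaining task is to bound the rank of $\pi_k$, since $\dim V=\dim K+\mathrm{rank}(\pi_k)$. For $k=2$ this is easy: $\pi_2=-\pi_1$ makes the degree-$2$ condition collapse to $2\bigwedge^2\pi_1=0$, forcing $\mathrm{rank}(\pi_1)\le 1$. For general $k$ a coordinate-by-coordinate rank argument fails—explicit examples exist where $\dim V=k-1$ but the individual $\pi_j$ have rank larger than $1$—so the rank bound must be extracted from the combined higher-degree conditions. I would argue by contradiction, assuming $\dim V\ge k$, choosing a basis $v_1,\ldots,v_d$ of $V$ adapted to the flag $K\subset V$, and substituting $\pi_k(v)=-\sum_{j<k}\pi_j(v)$ into the top-degree identity $\sum_{j=1}^k\pi_j(v_1)\wedge\cdots\wedge\pi_j(v_d)=0$; expanding and then using the intermediate-degree identities $\sum_j\bigwedge^i\pi_j=0$ should yield the desired incompatibility.

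I expect this last step to be the main obstacle: the degree-$1$ and higher-degree exterior relations are genuinely independent constraints, and the correct combinatorial manipulation is not immediate. Conceptually, the reason the bound $\dim V\le k-1$ should hold is that condition (*) encodes, modulo the ideal generated by $V^\perp\subset(W^*)^k$, the vanishing of all elements $\alpha\otimes p_i$ with $\alpha\in\bigwedge^iW^*$ and $p_i=\sum_j(e_j^*)^i\in S^i(\mathbb{C}^k)^*$ the $i$-th power sum; since Newton's identities say that $p_1,\ldots,p_k$ generate all symmetric polynomials in $k$ variables over $\mathbb{Q}$, this should force $V^\perp$ to contain enough structure to cap $\dim V$ at $k-1$.
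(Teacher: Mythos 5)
Your reduction is fine as far as it goes: the base case $k=1$, the dual reformulation $\sum_{j=1}^k\pi_j(v_1)\wedge\cdots\wedge\pi_j(v_i)=0$, and the observation that $K=\ker(\pi_k)$ embeds in $W^{k-1}$ and inherits property (*), whence $\dim K\le k-2$ by induction. But this only yields $\dim V\le k-2+\mathrm{rank}(\pi_k)$, and the step that would close the induction --- showing $\mathrm{rank}(\pi_k)\le k-1-\dim K$, or deriving a contradiction from $\dim V\ge k$ --- is precisely what you do not carry out. You are right to flag it as the main obstacle, but as written the proposal defers the entire substance of the proposition. Worse, the specific route you sketch is problematic: the ``top-degree identity'' $\sum_j\pi_j(v_1)\wedge\cdots\wedge\pi_j(v_d)=0$ with $d=\dim V$ arguments lives in $\bigwedge^d W$ and is \emph{vacuous} whenever $\dim W<\dim V$ (e.g.\ $\dim W=2$, $k$ large), which is exactly the regime where the statement has content. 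The only nontrivial constraints are the identities in degrees $i\le\dim W$, and one must combine them across all degrees simultaneously; substituting $\pi_k=-\sum_{j<k}\pi_j$ into a single identity will not do it. The closing heuristic about Newton's identities and power sums is suggestive but is not an argument.

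For comparison, the paper's proof handles this combination of degrees by a genuinely different device: it uses the $\mathrm{Aut}(W)$-action on $\mathrm{Grass}(r,W^k)$ to degenerate $V$ to a $\mathbb{C}^*$-fixed point of the form $V=\langle A_1e_1,\ldots,A_ne_n\rangle$ with $A_i\subset(\mathbb{C}^k)^*$, where condition (*) becomes the purely combinatorial condition (**) on the subspaces $A_i$ (orthogonality of products of the $A_i$ to the vector $e=(1,\ldots,1)$). The bound $\sum_i\dim A_i\le k-1$ is then proved by induction on $n$ via the key multiplicative inequality $\dim(A\cdot B+A+B)\ge\dim A+\dim B$, itself established by showing the multiplication map $(e+A)\times(e+B)\to\mathbb{C}^k$ has finite fibers near $(e,e)$ through a Laurent-expansion argument. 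Some such mechanism for trading the wedge identities in all degrees for a single quantitative statement is what your outline is missing; until the rank/contradiction step is actually supplied, the proof is incomplete.
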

\begin{rema}{\rm If ${\rm dim}\,W=1$, the result is obvious, as $V\subset W^k_0\subset W$, where
$W^k_0:={\rm Ker}\,(\sigma: W^k\rightarrow W)$,  $\sigma$ being the sum map. If ${\rm dim}\,W=2$,
the result follows from the fact that, choosing a generator $\eta$ of
$\bigwedge^2W^*$, the $2$-form $\sum_jpr_j^*\eta$ is nondegenerate on $W^k_0$ (which has dimension $2k-2$). A subspace $V$ satisfying (*) is
contained in $W^k_0$ and totally isotropic for this $2$-form, hence has dimension $r\leq k-1$.}
\end{rema}
\begin{proof}[Proof of Proposition \ref{ledimorbitinfinitesimal}] Note that the group ${\rm Aut}\,W$ acts on $W^k$, with induced action on
 ${\rm Grass}(r,W^k)$ preserving the set of $r$-dimensional vector subspaces $V\subset W^k$ satisfying condition (*). Choosing a $\mathbb{C}^*$-action on ${W}$ with
finitely many fixed points $e_1,\ldots, e_n$, $n={\rm dim}\,W$,
the fixed points $[V]\in {\rm Grass}\,(r,W^k)$ under the induced action of  $\mathbb{C}^*$ on the Grassmannian are
of the form $V=\langle A_1e_1,\ldots A_n e_n\rangle$, where $A_i\subset (\mathbb{C}^k)^*$ are vector subspaces , with $r=\sum_i{\rm dim}\,A_i$. It suffices to prove the inequality $r\leq k-1$ at such a fixed point, which we do now.
The spaces $A_i$ have to satisfy the following conditions:

\vspace{0.5cm}

(**) {\it For any
$\emptyset\not=I=\{i_1,\ldots,\,i_s\}\subset \{1,\ldots,n\}$ and for any choices of $\lambda_l\in A_{i_l}$, $l=1,\ldots,\,s$,}
$$\sum_{j=1}^k(\lambda_1\ldots\lambda_s)(f_j)=0,$$
{\it where $f_j$ is the natural basis of $\mathbb{C}^k$. }

\vspace{0.5cm}

A better way to phrase condition (**) is to use the (standard) pairing $\langle\,,\,\rangle$ on $(\mathbb{C}^k)^*$, given
by
$$ \langle \alpha,\beta\rangle=\sum_{j=1}^k\alpha(f_j)\beta(f_j).$$
Condition (**) when there are only two nonzero spaces $A_i$ is
the following

\begin{eqnarray}\label{eqdeux1} \langle \alpha,\beta\rangle=0\,\,\forall \alpha\in A_1,\,\beta\in A_2\\
\label{equn}
\langle \alpha,e\rangle=0=\langle e,\beta\rangle=0 \,\,\forall \alpha\in A_1,\,\beta\in A_2,
\end{eqnarray}
where $e$ is the vector $(1,\ldots,1)\in (\mathbb{C}^k)^*$. Indeed, the case $s=2$ in (**)
provides (\ref{eqdeux1}) and the case $s=1$ in (**) provides (\ref{equn}).
The fact that the pairing $\langle\,,\,\rangle$ is nondegenerate on $(\mathbb{C}^k)^*_0:=e^{\perp}$ immediately implies that
$\sum_i{\rm dim}\,A_i\leq k-1$ when only two of the spaces $A_i$ are nonzero.
By the above arguments, the proof of Proposition \ref{ledimorbitinfinitesimal} is finished used the following lemma:
\begin{lemm}\label{lepourproduitdeformes} Let $A_i\subset (\mathbb{C}^k)^*,\,i=1,\ldots,\,n$, be linear subspaces satisfying conditions (**). Then
$\sum_i{\rm dim}\,A_i\leq k-1$.
\end{lemm}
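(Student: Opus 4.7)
The plan is to package the many scalar conditions (\textbf{**}) into a single polynomial identity. Define the polynomial map
\[
\nu : \prod_{i=1}^n A_i \longrightarrow (\mathbb{C}^k)^*, \qquad \nu(\alpha_1,\ldots,\alpha_n) := \prod_{i=1}^n (e + \alpha_i),
\]
where $e \in (\mathbb{C}^k)^*$ sends each $f_j$ to $1$ and the product is the pointwise multiplication making $(\mathbb{C}^k)^*$ into a ring isomorphic to $\mathbb{C}^k$. Expanding $\nu(\alpha) = \sum_{I \subset \{1,\ldots,n\}} \prod_{i \in I}\alpha_i$ and applying the trace $\mathrm{tr}(v) := \langle v, e\rangle = \sum_j v(f_j)$, the $I = \emptyset$ term contributes $\mathrm{tr}(e) = k$ while every $I \neq \emptyset$ term contributes $0$ by (\textbf{**}). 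Hence $\mathrm{tr} \circ \nu \equiv k$, and the image of $\nu$ lies in the $(k-1)$-dimensional affine hyperplane $H := \mathrm{tr}^{-1}(k) \subset (\mathbb{C}^k)^*$.

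If $\nu$ is generically finite onto its image, then $\sum_i \dim A_i = \dim \prod_i A_i = \dim \nu(\prod_i A_i) \leq k - 1$, which is the desired bound. At a point $\alpha \in \prod_i A_i$ where each $U_i := e + \alpha_i$ is a unit of $(\mathbb{C}^k)^*$, a direct computation yields
\[
d\nu_\alpha(\delta\alpha_1,\ldots,\delta\alpha_n) \;=\; U \cdot \sum_{i=1}^n U_i^{-1}\, \delta\alpha_i, \qquad U := \prod_{i=1}^n U_i,
\]
so injectivity of $d\nu_\alpha$ is equivalent to the $n$ subspaces $U_1^{-1}A_1, \ldots, U_n^{-1}A_n \subset (\mathbb{C}^k)^*$ being in direct sum position. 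By upper semicontinuity of the kernel dimension of $d\nu$ it suffices to exhibit a single such $\alpha$, and the locus where some $U_i$ fails to be invertible is Zariski closed, so it poses no obstruction to finding one.

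The main obstacle is producing that direct-sum witness. The plan is a one-parameter degeneration: set $\alpha_i = \varepsilon\beta_i$ with $(\beta_1,\ldots,\beta_n) \in \prod_i A_i$ generic, and suppose for contradiction that a nontrivial rational section $(\delta\alpha_i(\varepsilon)) \in \bigoplus_i A_i \otimes \mathbb{C}(\varepsilon)$ of the kernel of $d\nu$ exists. Clearing denominators yields the polynomial identity
\[
\sum_{i=1}^n \delta\alpha_i(\varepsilon)\,\prod_{i' \neq i}(e + \varepsilon\beta_{i'}) = 0
\]
in $(\mathbb{C}^k)^*[\varepsilon]$; writing $\delta\alpha_i(\varepsilon) = \sum_{m \geq 0} \varepsilon^m \delta\alpha_i^{(m)}$, the coefficient of $\varepsilon^0$ forces $\sum_i \delta\alpha_i^{(0)} = 0$, and each successive order relates $\sum_i \delta\alpha_i^{(m+1)}$ to symmetric expressions in the $\beta_{i'}$'s applied to the earlier $\delta\alpha_i^{(\ell)}$ for $\ell \leq m$. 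The pairwise orthogonalities and higher mixed-product vanishings supplied by (\textbf{**})---the natural generalization of the nondegenerate $2$-form on $W^k_0$ exploited in the $\dim W = 2$ case of the remark preceding the lemma---should, via a Vandermonde-type argument that leverages the genericity of the values $\beta_i(f_j)$, isolate each $\delta\alpha_i^{(m)}$ and force it to vanish. Making this cascade rigorous is the delicate point: one must match, at each order, the subspaces of $(\mathbb{C}^k)^*$ generated by iterated products $\beta_{i_1}\cdots\beta_{i_s}\,\delta\alpha_i^{(m)}$ against those cut out by (\textbf{**}).
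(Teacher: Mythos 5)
Your reduction is set up correctly and is in fact the natural $n$-factor generalization of what the paper does for two factors: the map $\nu(\alpha)=\prod_i(e+\alpha_i)$, the observation that $\mathrm{tr}\circ\nu\equiv k$ by (**) so that the image lies in a $(k-1)$-dimensional affine hyperplane, and the computation $d\nu_\alpha(\delta\alpha)=U\sum_i U_i^{-1}\delta\alpha_i$ are all correct. But the proof is not complete: everything hinges on exhibiting one point $\alpha$ where $d\nu_\alpha$ is injective (equivalently, where the $U_i^{-1}A_i$ are in direct sum position), and this is exactly where the whole difficulty of the lemma is concentrated. Note that at $\alpha=0$ the differential is just $\sum_i\delta\alpha_i$, which typically has a large kernel (the $A_i$ are mutually orthogonal for the standard pairing, but over $\mathbb{C}$ they can overlap or even coincide while satisfying (**)), so the injectivity really must be extracted from the higher-order conditions in (**) at a nontrivial point. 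Your final paragraph proposes a degeneration $\alpha_i=\varepsilon\beta_i$ and an order-by-order ``cascade,'' but explicitly stops at ``making this cascade rigorous is the delicate point'' without carrying it out; as written there is no argument that the relations supplied by (**) actually kill a hypothetical kernel section. This is a genuine gap, not a routine verification.

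For comparison, the paper avoids attacking the $n$-factor differential directly. It first proves a two-factor statement (Lemma \ref{ledeuxA}): if $A\perp B$ and $A,B\perp e$, then $\dim(A\cdot B+A+B)\ge\dim A+\dim B$, which is exactly the generic finiteness of your $\nu$ for $n=2$; this is established not by computing the differential but by taking a curve in a putative positive-dimensional fiber of the multiplication map through $(e,e)$, writing the resulting identity $\sum_i\sigma_i'(s)\sigma_i(t)^{-1}=0$, and deriving a contradiction from Laurent expansions along $s=t^{\ell}$. The general case then follows by induction on $n$, replacing $A_{n-1},A_n$ by $A_{n-1}\cdot A_n+A_{n-1}+A_n$ and checking that the new collection still satisfies (**). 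If you want to salvage your approach, the most economical fix is to adopt that inductive reduction (your map $\nu$ for $n=2$ is literally the paper's $\mu$), or else to adapt the curve/pole-order argument to a positive-dimensional fiber of the full $n$-factor map; either way, the missing injectivity statement must be proved, not asserted.
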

\begin{proof} We will use the following result:
\begin{lemm}\label{ledeuxA} Let $A\subset \mathbb{C}^k,\,B\subset \mathbb{C}^k$ be vector subspaces
satisfying the following conditions:

(i) For any $a=(a_i)\in A,\,b=(b_i)\in B,\,\,\sum_ia_ib_i=0$.

(ii) For any $a=(a_i)\in A,\,b=(b_i)\in B,\,\,\sum_ia_i=0,\,\sum_ib_i=0$.

Then ${\rm dim}\,(A\cdot B +A+B)\geq {\rm dim}\,A+{\rm dim}\,B$, where
$A\cdot B$ is the vector subspace of  $\mathbb{C}^k$ generated by the elements $(a_ib_i)$, $a=(a_i)\in A,\,b=(b_i)\in B$.

\end{lemm}
Let us first show how Lemma \ref{ledeuxA} implies Lemma \ref{lepourproduitdeformes}. Indeed, we can argue inductively on the number $n$ of spaces $A_i$. As already noticed, Lemma \ref{lepourproduitdeformes} is easy when $n=2$. Assuming the statement is proved for $n-1$,  let $A_1,\ldots,\,A_n$ be as in Lemma \ref{lepourproduitdeformes}  and let  $A'_1=A_1,\,\ldots,\,A'_{n-2}=A_{n-2}$
and $A'_{n-1}=A_{n-1}\cdot A_n+A_{n-1}+A_n$.
Then the set of spaces $A'_1,\ldots,\,A'_{n-1}$ satisfies conditions (**), and on the other hand
Lemma \ref{ledeuxA} applies to the pair $(A,B)=(A_{n-1},A_n)$ as they satisfy the desired conditions by
(**). Hence we have ${\rm dim}\,A'_{n-1}\geq {\rm dim}\,A_{n-1}+ {\rm dim}\,A_{n}$ and
by induction on $n$, $\sum_{i=1}^{n-2}{\rm dim}\,A'_{i}+{\rm dim}\,A'_{n-1}\leq k-1$.
Hence $\sum_{i=1}^{n}{\rm dim}\,A_{i}\leq k-1$.
\end{proof}
\begin{proof}[Proof of Lemma \ref{ledeuxA}] Let $A_1:= e+A,\,B_1=e+B\subset \mathbb{C}^k$.
Under the conditions (i) and (ii), the multiplication map
$$\mu: A_1\times B_1\rightarrow \mathbb{C}^k$$
$$\mu((a_i),(b_i))=(a_ib_i)$$
has image in the affine space $\mathbb{C}^k_1:=e+\mathbb{C}^k_0$, where
$\mathbb{C}^k_0=e^{\perp}$, and more precisely it generates the affine space
$e+A+B+A\cdot B\subset e+\mathbb{C}^k_0$.
It thus  suffices to show that
the dimension of the algebraic set ${\rm Im}\,\mu$ is at least ${\rm dim}\,A+{\rm dim}\,B$. Lemma \ref{ledeuxA} is thus implied by the following:

\begin{claim}\label{claim} The map $\mu$ has finite fiber  near the point $(e,e)\in A_1\times B_1$.
\end{claim}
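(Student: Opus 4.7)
The plan is to pass to logarithmic coordinates near $(e,e)$ and exploit conditions (i) and (ii) directly. Setting $u = \log a$ componentwise (well-defined near $a=e$), the equation $ab=e$ forces $b = e^{-u}$, so the fiber $\mu^{-1}(e)$ near $(e,e)$ is identified with the analytic set $\{u\in\mathbb{C}^k : e^u-e\in A \text{ and } e^{-u}-e\in B\}$, and we must show that $0$ is isolated. Linearizing the two defining conditions gives $u\in A$ and $-u\in B$, so the tangent space at $0$ equals $A\cap B$; when $A\cap B=0$ this already yields isolation by the implicit function theorem.

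The key input is the analysis of straight-line arcs. For $w\in A\cap B$ with $w\neq 0$, the arc $u(t)=tw$ lies in the fiber if and only if every coefficient of the Taylor expansions $e^{tw}-e = \sum_{n\geq 1} t^n w^n/n!$ and $e^{-tw}-e = \sum_{n\geq 1}(-1)^n t^n w^n/n!$ lies in $A$ and $B$ respectively, where $w^n$ denotes the componentwise $n$-th power. This forces $w^n\in A\cap B$ for every $n\geq 1$. Applying condition (i) to $w^m\in A$ and $w^n\in B$ yields $\langle w^m,w^n\rangle = \sum_i w_i^{m+n}=0$, so the power sum $p_N(w):=\sum_i w_i^N$ vanishes for every $N\geq 2$; together with $p_1(w)=0$ from (ii), Newton's identities force all elementary symmetric polynomials of $w_1,\dots,w_k$ to vanish, so $w=0$, a contradiction. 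Hence no straight-line arc through $0$ lies in the fiber.

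To rule out a general analytic arc $u(t) = tw_1 + t^2 w_2 + \cdots$ in the fiber, I would propagate the above power-sum vanishing argument through the order-by-order expansion, pairing the constraints $e^{u(t)}-e \in A[[t]]$ and $e^{-u(t)}-e \in B[[t]]$ against the elements $w_1^m$ of $A\cap B$ in order to annihilate the mixed terms involving the higher coefficients $w_2, w_3,\dots$. The expected outcome is again $p_N(w_1)=0$ for all $N\geq 1$, forcing $w_1=0$ and contradicting the non-triviality of the arc. The main obstacle is precisely this last step: controlling the bookkeeping of the correction terms $w_n$ in the higher-order expansion so that the orthogonality pairings indeed extract the desired vanishing at every order.

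Once $(e,e)$ is shown to be isolated in $\mu^{-1}(e)$, upper semi-continuity of fiber dimension gives $\dim {\rm Im}\,\mu \geq \dim A+\dim B$, which completes the proof of Lemma~\ref{ledeuxA} via Claim~\ref{claim}.
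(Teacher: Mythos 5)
Your reduction of Claim~\ref{claim} to showing that $0$ is isolated in the analytic germ $\{u:\ e^u-e\in A,\ e^{-u}-e\in B\}$ is correct, and the straight-line analysis (all componentwise powers $w^n$ lie in $A\cap B$, hence all power sums of $w$ vanish by (i) and (ii), hence $w=0$ by Newton's identities) is a clean computation. But it only shows that no \emph{line} through $0$ is contained in the fiber, and that does not imply the Claim: a positive-dimensional analytic germ need not contain any line through the origin (think of $\{(t,t^2)\}$), even when its Zariski tangent space is positive-dimensional. And $A\cap B$ can indeed be nonzero under (i) and (ii): take $A=B=\mathbb{C}\cdot(1,i,-1,-i)\subset\mathbb{C}^4$. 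So the whole content of the Claim sits in the ``general arc'' step that you leave open, and the propagation you sketch does not go through as stated. For an arc $u(t)=tw_1+t^2w_2+\cdots$, the coefficient of $t^2$ in $e^{u}-e$ is $w_2+\tfrac12 w_1^2\in A$ while that of $e^{-u}-e$ is $-w_2+\tfrac12 w_1^2\in B$, so one only learns $w_1^2\in A+B$, not $w_1^2\in A\cap B$. Pairing against $w_1\in A\cap B$ still yields $p_3(w_1)=0$, but already for $p_4(w_1)=\langle w_1^2,w_1^2\rangle$ you would need to control the self-pairings of the two summands in a decomposition $w_1^2=a+b$, $a\in A$, $b\in B$, about which condition (i) says nothing. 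The ``bookkeeping obstacle'' you flag is therefore a genuine mathematical gap, not a routine verification.

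The paper's proof avoids this by arguing globally rather than in a local chart: an irreducible positive-dimensional fiber yields rational functions $\sigma_i$ on the complete normalization $C$ of the curve, with $(\sigma_i)_i$ mapping to $A_1$ and $(1/\sigma_i)_i$ to $B_1$, and conditions (i), (ii) give the two-variable identity $\sum_i\sigma_i'(s)/\sigma_i(t)=0$. A nonconstant $\sigma_l$ must have a pole on $C$, and a Laurent-order computation at such a point, restricted to the curve $s=t^l$, shows that the lowest-order term of the sum has nonzero coefficient $M_d\,d$ --- a contradiction. The poles, which are invisible in your logarithmic chart around $(e,e)$, are exactly what makes the argument close. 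If you wish to stay local, a plausible route is to differentiate the identity $\sum_i\sigma_i(s)/\sigma_i(t)=k$ repeatedly in $s$ and $t$ and restrict to the diagonal so as to prove $\sum_i(\sigma_i'/\sigma_i)^n=0$ for all $n$, and then apply your Newton's-identities step to the logarithmic derivatives; but this induction needs to be carried out in detail, and some such additional idea is in any case required before the Claim, and with it Lemma~\ref{ledeuxA}, can be considered proved.
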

The proof of the claim is as follows: Suppose $\mu$ has  a positive dimensional fiber passing through
$(e,e)$. We choose an irreducible curve  contained in the fiber, passing through $(e,e)$ and
with normalization $C$. The curve $C$ admits rational functions $\sigma_i,\,i=1,\ldots,\,k$ mapping it to
$A_1$ such that the functions $\frac{1}{\sigma_i}$ map $C$ to $B_1$.
The conditions (i) and (ii) say that $$\sum_i\sigma'_i(s)\frac{1}{\sigma_i(t)}=0$$ as a function of $(s,t)$ for any choice of points $x,\,y\in C$ and local  coordinates $s,\,t$ near $x$, resp. $y$, on $C$. We now do $x=y$ and choose for $x$ a pole (or a zero) of one of the  $\sigma_l$'s. We assume that the local coordinate $s$ is centered at $x$, and  write
$\sigma_i(s)=s^{d_i}f_i$, with $f_i$ a holomorphic function of $s$ which is  nonzero at $0$.
We then get
\begin{eqnarray}\label{eqpourfonction} \sigma'_i(s)\frac{1}{\sigma_i(t)}=d_i \frac{s^{d_i-1}}{t^{d_i}} \phi_i(s,t)+\frac{s^{d_i}}{t^{d_i}}\psi_i(s,t),
\end{eqnarray}
where $\phi_i(s,t)$ is holomorphic in $s,\,t$ and takes value $1$ at $(x,x)=(0,0)$ and $\psi_i(s,t)$ is holomorphic in $s,\,t$.
 Restricting to a curve
$D\subset C\times C$ defined by the equation $s=t^l$ for some chosen $l\geq2$, the function
$(\sigma'_i(s)\frac{1}{\sigma_i(t)})_{\mid D}$ has order $l(d_i-1)-d_i=(l-1)d_i-l$  and first nonzero coefficient in its Laurent development equal to $d_i$. These orders are different for distinct $d_i$ and
 the vanishing
 $\sum_i\sigma'_i(s)\frac{1}{\sigma_i}(t)=0$ is then clearly impossible: indeed, by pole order considerations, for the minimal negative value $d$ of $d_i$, hence minimal value of  the numbers $(l-1)d_i-l$, the
 first nonzero coefficient in the Laurent development of $(\sigma'_i(s)\frac{1}{\sigma_i(t)})_{\mid D}$ should be also $0$ and it is the same as for the  sum
 $\sum_{i,\,d_i=d}(\sigma'_i(s)\frac{1}{\sigma_i}(t))_{\mid D}$, which   is  equal to $M_d d$, where $M_d$ is the cardinality of the set
 $\{i,\,d_i=d\}$.

The claim is proved.
\end{proof}
The proof of Proposition \ref{ledimorbitinfinitesimal} is thus finished.
\end{proof}
\subsection{An alternative proof of Theorem \ref{coroorbit}, (iv)}
As a first application, let us give a second proof of Theorem \ref{coroorbit}, (iv).
The general dimension
estimate of Theorem \ref{coroorbit}, (i) implies  that the locus swept-out by the orbit of $|k0_A|$ is of dimension
$\leq k-1$ for any abelian variety $A$. This locus is clearly  naturally defined. Hence by Theorem
\ref{thegenpirola}, (ii), it is countable for a very general abelian variety
of dimension $\geq 2k-1$.

\section{Proof of Theorem \ref{coroorbit}, (ii) and (iii)}
Theorem \ref{coroorbit}, (iv) has been proved in Section \ref{sectheoorbitoA}.
We  will now prove the following result by induction on $l\in\{0,\ldots,k\}$:
\begin{prop} \label{proinduction} For $g\geq 2^{l}(2k-1)+(2^{l}-1)(k-2)$, and $A$ a very general abelian variety
of dimension $g$, any $0$-cycle of the form $(k-l)\{0_A\}+Z$, with $Z\in A^{(l)}$, has countable orbit.
\end{prop}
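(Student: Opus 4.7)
The proof is by induction on $l\in\{0,1,\ldots,k\}$. The base case $l=0$ is exactly Theorem \ref{coroorbit}, (iv), already established in Section \ref{sectheoorbitoA}. For the inductive step, assume the statement at level $l$, write $d_l:=2^l(2k-1)+(2^l-1)(k-2)$, and note the recursion $d_{l+1}=2d_l+(k-2)$. Consider $A$ very general of dimension $g\geq d_{l+1}$.

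The plan is to introduce a naturally defined subset of $A$ which records the points appearing in positive-dimensional orbits of the relevant cycles, show that it is a proper subset of $A$ via the inductive hypothesis, and then appeal to Theorem \ref{thegenpirola}. Concretely, set
$$\Sigma^{(l+1)}_A:=\{x\in A\,:\,\exists\, z_2,\ldots,z_{l+1}\in A\text{ with }|(k-l-1)\{0_A\}+\{x\}+\{z_2\}+\cdots+\{z_{l+1}\}|\text{ positive-dimensional}\}.$$
The properness step is immediate from the induction: if $0_A\in \Sigma^{(l+1)}_A$, the corresponding cycle equals $(k-l)\{0_A\}+Z''$ with $Z'':=\{z_2\}+\cdots+\{z_{l+1}\}\in A^{(l)}$, whose orbit is countable by the inductive hypothesis, contradicting positive-dimensionality. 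Hence $\Sigma^{(l+1)}_A\neq A$ for very general $A$ of dimension $\geq d_l$. Granting naturality (see below), Theorem \ref{thegenpirola}, (i), applied with $g_0=d_l$ then gives countability of $\Sigma^{(l+1)}_A$ for very general $A$ of dimension $\geq 2d_l-1$, and a fortiori for $\dim A\geq d_{l+1}$ since $d_{l+1}-(2d_l-1)=k-1\geq 0$. The proposition follows: any irreducible positive-dimensional family of cycles of the form $(k-l-1)\{0_A\}+\sum_{i=1}^{l+1}\{z_i(t)\}$ in a single orbit would, by the symmetric role of the coordinates in $A^{(l+1)}$, force each $z_i(t)\in \Sigma^{(l+1)}_A$, hence into a countable set, contradicting positive-dimensionality.

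The main obstacle is verifying the naturality of $\Sigma^{(l+1)}_A$ in the sense of Definition \ref{definat}, specifically axiom (i): for a morphism $f:A\to B$ of abelian varieties one needs $f(\Sigma^{(l+1)}_A)\subset \Sigma^{(l+1)}_B$. The positive-dimensional orbit $T$ of $W_A=(k-l-1)\{0_A\}+\{x\}+\{z_2\}+\cdots+\{z_{l+1}\}$ attached to $x$ pushes forward to $f_*(T)\subset |f_*W_A|\subset B^{(k)}$, but $f_*(T)$ may collapse to a lower-dimensional family while the ambient orbit $|f_*W_A|$ itself may be countable, in which case $f(x)$ is not forced to lie in $\Sigma^{(l+1)}_B$ as defined. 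Handling this delicately requires either a slight enlargement of $\Sigma^{(l+1)}_A$ (for instance the union, over all morphisms $A'\to A$ of abelian varieties, of the $f$-images of the analogous subset of $A'$), which is natural by construction, or bypassing Theorem \ref{thegenpirola} and reproducing its proof directly — specializing $A$ to an isogenous product $B\times E$ with $\dim B=\dim A-1$ and projecting to $B$, then combining the inductive hypothesis with Lemma \ref{lefini}-style Gauss-map arguments to rule out positive-dimensional orbits. The iterated isogeny route yields precisely the bound $d_{l+1}=2d_l+(k-2)$ appearing in the statement, which is more conservative than the $2d_l-1$ obtained from a naive black-box application of Theorem \ref{thegenpirola}, (i), and is therefore plausibly not sharp.
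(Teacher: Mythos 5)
Your setup (induction on $l$ with the recursion $d_{l+1}=2d_l+(k-2)$, the set $\Sigma^{(l+1)}_A$ of base points of positive-dimensional orbits of the special form, the properness check at dimension $d_l$ via the inductive hypothesis, and the appeal to Theorem \ref{thegenpirola}) is exactly the paper's strategy, and your diagnosis of the naturality problem for axiom (i) is the same difficulty the paper addresses via Lemma \ref{generic}, Corollary \ref{coropourtheoorbitgen} and the ``deformable locus'' in the polarized setting. But there is a genuine gap at the end. Countability of $\Sigma^{(l+1)}_A$ only tells you that there are \emph{countably many} cycles of the form $(k-l-1)\{0_A\}+Z$ whose orbit is positive-dimensional; it does not rule out that some of these countably many orbits are themselves positive-dimensional, which is what the proposition asserts. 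Your closing argument --- that a positive-dimensional family inside a single orbit forces each $z_i(t)$ into the countable set $\Sigma^{(l+1)}_A$ --- only applies to members of the orbit that still contain $0_A$ with multiplicity $k-l-1$. The orbit $|W|\subset A^{(k)}$ consists of \emph{all} effective degree-$k$ cycles rationally equivalent to $W$; a positive-dimensional component of it typically contains no cycle of the special form other than $W$ itself, so no contradiction is obtained and no point of such a component need lie in $\Sigma^{(l+1)}_A$.

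The paper closes this gap with a second step that you have omitted: once only countably many positive-dimensional orbits of the special form remain, the locus in $A$ swept out by these orbits is (a deformation version of) a naturally defined subset, and it has dimension at most $k-1$ by Theorem \ref{ledimorbit} (i.e.\ Theorem \ref{coroorbit}, (i)). Applying Theorem \ref{thegenpirola}, (iii) with $g_0=2d_l-1$ and dimension bound $k-1$ makes this swept-out locus countable for $g\geq(2d_l-1)+(k-1)=d_{l+1}$, and a positive-dimensional orbit necessarily sweeps out a positive-dimensional locus in $A$, so all the orbits are countable. This second application of Theorem \ref{thegenpirola} is precisely what produces the extra $+(k-2)$ in the recursion; your remark that the stated bound is ``more conservative'' than the $2d_l-1$ coming from part (i) alone is therefore a symptom of the missing step rather than evidence that the bound can be improved to $2d_l-1$ by this method. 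Separately, your proposed enlargement of $\Sigma^{(l+1)}_A$ by images under all morphisms $A'\to A$ would make properness hard to verify (it requires controlling $\Sigma^{(l+1)}_{A'}$ for $A'$ of arbitrary dimension); the paper's route via the Hodge-class argument of Lemma \ref{generic}, showing that images of positive-dimensional orbits under projections stay positive-dimensional for generically complete families, is the one that actually works.
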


The case $l=0$  is Theorem \ref{coroorbit}, (iv) and the case $l=k$ is then Theorem \ref{coroorbit}, (ii). The case $l=k-2$ is
Theorem \ref{coroorbit}, (iii).

It thus only remains  to prove Proposition \ref{proinduction}.
For clarity, let us write-up the detail of the first induction step:
Let $\Sigma_1(A)\subset A$ be the set
of points $x\in A$ such that the   orbit    $|(k-1)\{0_A\}+\{x\}|\subset A^{(k)}$ is positive dimensional.
The set $\Sigma_1(A)$ is a countable union of closed algebraic subsets of $A$.
We would like to show that $\Sigma_1(A)$ is naturally defined in the sense of Definition
\ref{definat}, and there is a small difficulty here: suppose that $p:A\rightarrow B$ is a morphism of abelian
varieties, and let $|Z|\subset A^{(k)} $ be a positive dimensional orbit for rational equivalence on $A$. Then
$p_*(|Z|)\subset B^{(k)} $ could be zero-dimensional.
In the case where $Z=(k-1)\{0_A\}+\{x\}$, this prevents a priori proving that $\Sigma_1(A)$ satisfies
 axiom (ii) of Definition \ref{definat}.
This problem can be circumvented using the following lemma which has been in fact already used  in the proof of Theorem \ref{thegenpirola}.
 Let $\mathcal{A}\rightarrow S$ be a generically complete family of abelian varieties of dimension $g$. This means that we fixed a polarization type $\lambda$ and the moduli map
 $S\rightarrow\mathcal{A}_{g,\lambda}$ is dominant.

 \begin{lemm}\label{generic} Let   $\mathcal{W}\subset \mathcal{A}$ be a closed algebraic subset which is
 flat over $S$ of relative dimension $k'$. Then:

  (i) For any $b\in S$, any morphism $p: \mathcal{A}_b\rightarrow B$ of abelian varieties
  with ${\rm dim}\,B\geq k'$,  $p(\mathcal{W}_b)\subset B$ has dimension $k'$.

  (ii) Assume $k'>0$. For any $b\in S$, any morphism $p: \mathcal{A}_b\rightarrow B$ of abelian varieties with ${\rm dim}\,B>0$,  $p(\mathcal{W}_b)\subset B$ has positive dimension.
\end{lemm}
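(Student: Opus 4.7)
The plan is to reduce both parts to a single computation: that the cohomology class $[\mathcal{W}_b]\in H^{2(g-k')}(\mathcal{A}_b,\mathbb{Q})$ is a positive rational multiple of $\theta^{g-k'}$, with the multiplier independent of $b\in S$. To establish this, I would argue as in the proof of Lemma \ref{lefini}: for very general $b\in S$, generic completeness implies that Hodge classes on $\mathcal{A}_b$ of type $(g-k',g-k')$ are spanned by $\theta^{g-k'}$, so $[\mathcal{W}_b]=c(b)\theta^{g-k'}$ for some $c(b)\in\mathbb{Q}$. Flatness of $\mathcal{W}$ over $S$ makes $b\mapsto[\mathcal{W}_b]$ a flat section of the local system $R^{2(g-k')}\pi_*\mathbb{Q}$ on $S$, and $\theta^{g-k'}$ is also a flat section, so the scalar $c(b)=c$ is constant on $S$. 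Positivity of $c$ follows from $c\int_{\mathcal{A}_b}\theta^g=\int_{\mathcal{W}_b}\theta^{k'}>0$, by ampleness of $\theta$ and non-emptiness of $\mathcal{W}_b$ (of pure dimension $k'$ by flatness).

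For (i), reduce to the case $p$ surjective (replacing $B$ by $p(\mathcal{A}_b)$) and write $r=\dim B\geq k'$. Suppose $\dim p(\mathcal{W}_b)<k'$; then the cycle-theoretic pushforward $p_*\mathcal{W}_b$ vanishes, so $p_*[\mathcal{W}_b]=0$ in $H^{2(r-k')}(B,\mathbb{Q})$, which forces $p_*(\theta^{g-k'})=0$ by the preliminary. Picking any polarization $\theta_B$ on $B$ and using the projection formula,
\[
0=p_*(\theta^{g-k'})\cdot\theta_B^{k'}=\int_{\mathcal{A}_b}\theta^{g-k'}\cdot(p^*\theta_B)^{k'}.
\]
But $(p^*\theta_B)^{k'}$ is represented by $p^{-1}(Y)$ for a complete intersection $Y\subset B$ of dimension $r-k'$, so $p^{-1}(Y)$ has dimension $g-k'$ and $\int_{p^{-1}(Y)}\theta^{g-k'}>0$ since $\theta$ is ample on $\mathcal{A}_b$. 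This contradicts the vanishing, yielding $\dim p(\mathcal{W}_b)=k'$.

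For (ii), again reduce to $p$ surjective with $r=\dim B>0$. Suppose instead $\dim p(\mathcal{W}_b)=0$; then each irreducible component of $\mathcal{W}_b$ lies in some translate $K+x_i$ of $K:=(\ker p)^0$. The restriction of $p^*\theta_B$ to such a translate is the pullback of $\theta_B$ by a constant map, hence zero, so $[\mathcal{W}_b]\cdot p^*\theta_B=0$ in $H^*(\mathcal{A}_b,\mathbb{Q})$. Combined with $[\mathcal{W}_b]=c\,\theta^{g-k'}$, $c>0$, this gives $\theta^{g-k'}\cdot p^*\theta_B=0$, and multiplying by $\theta^{k'-1}$ (nonzero since $k'\geq 1$) yields $\int_{\mathcal{A}_b}\theta^{g-1}\cdot p^*\theta_B=0$. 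On the other hand, $\theta^{g-1}$ is represented by a general complete intersection curve $C$, and surjectivity of $p$ with $\dim B>0$ ensures $p|_C$ is non-constant for generic $C$, so $p_*C$ is a nonzero effective $1$-cycle on $B$, whence $p_*C\cdot\theta_B>0$. This contradicts the vanishing, so $\dim p(\mathcal{W}_b)>0$.

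The main hurdle is the preliminary step: propagating the identity $[\mathcal{W}_b]=c\,\theta^{g-k'}$ from very general $b$ to \emph{every} $b\in S$, which is what allows the cohomological contradictions to work uniformly in $b$. It rests on viewing $[\mathcal{W}_b]$ as a flat section of the Betti local system, which requires the flatness of $\mathcal{W}/S$ and the smoothness of $\mathcal{A}\to S$. Once this is in hand, parts (i) and (ii) become short contradictions using the projection formula and the ampleness of $\theta$ and $\theta_B$.
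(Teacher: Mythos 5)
Your proof is correct. The preliminary step --- flatness makes $b\mapsto[\mathcal{W}_b]$ a locally constant section of the Betti local system, for very general $b$ the only Hodge classes in the relevant degree are multiples of $\theta^{g-k'}$, hence $[\mathcal{W}_b]=c\,\theta^{g-k'}$ with $c$ a positive constant for \emph{every} $b$ --- is exactly the paper's argument, and your part (i) is the paper's part (i) with the implicit nonvanishing $p_*(\theta^{g-k'})\neq 0$ usefully made explicit via the projection formula against $\theta_B^{k'}$. Where you genuinely diverge is part (ii): the paper reduces to the case $k'=1$ by slicing $\mathcal{W}_b$ with complete intersection curves and then invokes (i), whereas you argue directly that a zero-dimensional image would force $[\mathcal{W}_b]\cdot p^*\theta_B=0$, contradicting $\theta^{g-1}\cdot p^*\theta_B>0$ for $\theta$ ample and $p^*\theta_B$ nef and nonzero. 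Your route is more self-contained (no need to check that the complete-intersection curves again form a flat family over $S$ so that (i) applies to them), at the cost of one more intersection-theoretic computation; both are sound. One shared caveat: as literally stated the lemma fails for the zero morphism (which is a group morphism in the paper's convention), so both your reduction ``replace $B$ by $p(\mathcal{A}_b)$'' and the paper's proof implicitly use that the image of $p$ has dimension $\geq k'$ (resp.\ $>0$); this is satisfied by the projections $p_\lambda$ to which the lemma is actually applied.
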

\begin{proof} (i) Indeed, the locally constant class $[\mathcal{W}_b]\in H^{2g-2k'}(\mathcal{A}_b,\mathbb{Q})$ must be a nonzero multiple of $\theta_\lambda^{g-k'}$, since for very generic
$b\in S$, these are the only nonzero Hodge classes on $\mathcal{A}_b$. We thus  have, using our assumption that
${\rm dim}\,B\geq k'$,
$p_*([\mathcal{W}_b])\not=0$ in $H^{2{\rm dim}\,B-2k'}(B,\mathbb{Q})$, which implies
that   ${\rm dim}\,p(\mathcal{W}_b)=k'$.

Statement (ii) is obtained as an application of  (i) in the case $k'=1$. One first reduces to this case by taking complete intersection curves in $\mathcal{W}_b$ in order to reduce to the case $k'=1$.
\end{proof}
In the following corollary,  the orbits for rational equivalence of $0$-cycles of $X$ are  taken in $X^l$ rather than $X^{(l)}$.
\begin{coro} \label{coropourtheoorbitgen} The situation being as in Lemma \ref{generic},
let $\mathcal{W}\subset \mathcal{A}^{l/S}$ be a family of positive dimensional orbits for rational equivalence in the fibers. Then, up to shrinking $S$ if necessary, for any $b\in S$, any morphism $p: \mathcal{A}_b\rightarrow B$ of abelian varieties, where $B$ is an abelian variety of dimension $>0$, and any $i=1,\ldots,l$, $p^l(\mathcal{W}_b)$ is a positive dimensional  orbit of $B$.
\end{coro}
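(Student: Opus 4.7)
The corollary packages two separate claims about the image $p^l(\mathcal W_b) \subset B^l$: first, that it remains contained in an orbit for rational equivalence in $B^l$, and second, that its dimension remains positive. The first claim is essentially formal, while the second is the one that genuinely requires Lemma \ref{generic}(ii).

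For the first claim, I would argue by pure functoriality. If $(z_1,\ldots,z_l)$ and $(z'_1,\ldots,z'_l)$ both lie in the orbit $\mathcal W_b$, then by definition $\sum_i z_i \sim \sum_i z'_i$ in $\mathrm{CH}_0(\mathcal A_b)$. Since $p_*$ preserves rational equivalence, applying it yields $\sum_i p(z_i) \sim \sum_i p(z'_i)$ in $\mathrm{CH}_0(B)$. Hence every pair of points in $p^l(\mathcal W_b)$ satisfies the orbit relation, so $p^l(\mathcal W_b)$ is contained in a single orbit for rational equivalence in $B^l$.

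For the second claim, I would reduce to Lemma \ref{generic}(ii) by projecting onto one of the factors. After shrinking $S$ (and if necessary replacing it by a generically finite cover, as was already done in the proof of Theorem \ref{thegenpirola}), I may assume $\mathcal W \to S$ is flat with irreducible fibers of a fixed positive relative dimension $k' > 0$. For each $j \in \{1,\ldots,l\}$, denote by $\mathcal W^{(j)} \subset \mathcal A$ the closure of the image of the projection $\pi_j : \mathcal A^{l/S} \to \mathcal A$ onto the $j$-th factor. Because $\mathcal W_b$ is positive dimensional and sits inside $\prod_j \mathcal W^{(j)}_b$, there must exist at least one index $j$ for which $\mathcal W^{(j)}_b$ is positive dimensional; since the number of choices of $j$ is finite, a Noetherian stratification of $S$ allows one, after a further shrinking, to fix a single such index $j_0$ uniformly and to make $\mathcal W^{(j_0)} \to S$ flat with irreducible fibers of positive dimension. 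Lemma \ref{generic}(ii) then applies directly to $\mathcal W^{(j_0)} \subset \mathcal A$ and any morphism $p : \mathcal A_b \to B$ with $\dim B > 0$, yielding that $p(\mathcal W^{(j_0)}_b) \subset B$ has positive dimension. Since $p(\mathcal W^{(j_0)}_b) = \pi_{j_0}(p^l(\mathcal W_b))$, this forces $p^l(\mathcal W_b)$ itself to have positive dimension in $B^l$.

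The main technical obstacle is not the orbit compatibility, which is automatic, but rather the bookkeeping required for the second step: one must arrange that some coordinate projection $\mathcal W^{(j_0)}$ of the family $\mathcal W$ is itself flat over $S$ with uniformly positive fiber dimension, so that Lemma \ref{generic}(ii) can be invoked at every point $b \in S$ with the same index $j_0$. Fortunately the finiteness of the index set $\{1,\ldots,l\}$ means this is achieved by a standard stratification and shrinking argument, so the corollary follows.
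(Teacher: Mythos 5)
Your argument is correct and follows essentially the same route as the paper: the orbit containment is handled by functoriality of $p_*$ on ${\rm CH}_0$, and the positive-dimensionality is obtained by projecting $\mathcal W$ onto a coordinate factor, arranging flatness of these projections after shrinking $S$, and applying Lemma \ref{generic}, (ii). Your write-up is merely a bit more explicit about fixing the index $j_0$ uniformly and about the orbit compatibility, both of which the paper leaves implicit.
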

\begin{proof}  Indeed, by specialization, $\mathcal{W}_b $ is a positive dimensional
 orbit for rational equivalence in $\mathcal{A}_b^{l}$. Up to shrinking
$S$, we can assume that  the restrictions  $\pi_{\mid  pr_i(\mathcal{W})}: pr_1(\mathcal{W})\rightarrow S$ are flat for all $i$.
Our assumption is that for one $i$, $pr_i(\mathcal{W})$ has positive relative dimension over $S$.
Lemma \ref{generic}, (ii),  then implies that $pr_i(p^l(\mathcal{W}_b))$ has positive
dimension, so that $p^l(\mathcal{W}_b)$ is a positive dimensional orbit for rational equivalence of $0$-cycles of $B$.
\end{proof}

\begin{proof}[Proof of Proposition \ref{proinduction}] Let now $A$ be a very general abelian variety. This means that for some generically complete family $\pi: \mathcal{A}\rightarrow S$ of polarized abelian varieties, $A$ is isomorphic to the fiber over a very general point of $S$. As $A$ is very general,  the locus $\Sigma_1(A)$  is the specialization of  the corresponding
 locus $\Sigma_1(\mathcal{A}/S)$ of $\mathcal{A}$, and more precisely, of the union of its  components dominating $S$.
 For any fiber $\mathcal{A}_b$, let us define the deformable locus $\Sigma_1(A)_{def}$
  as the one which is obtained by specializing to $\mathcal{A}_b$ the union of the dominating components of the locus of the   relative locus  $\Sigma_1(\mathcal{A}/S)$. For a very general abelian variety $A$,
  $\Sigma_1(A)=\Sigma_1(A)_{def}$ by definition.
 Corollary \ref{coropourtheoorbitgen} essentially says that this locus is naturally defined. This is not quite true because the definition of $\Sigma_1(A)_{def}$ depends on choosing a family $\mathcal{A}$ of deformations of $A$ (that is, a polarization on $A$). In the axioms of Definition \ref{definat}, we thus should work, not with abelian varieties but with polarized  abelian varieties. Axiom (i) should
 be replaced by its family version, where $\mathcal{A}\rightarrow S$ is locally complete,
 $S'\subset S$ is a subvariety, $f:\mathcal{A}_{S'}\rightarrow \mathcal{B}$ is a morphism of abelian varieties over
 $S'$, and $\mathcal{B}\rightarrow {S'}$ is locally complete.
 We leave to the reader proving that Theorem  \ref{thegenpirola} extends to this context.

Assume now $g\geq 2k-1$. Then $\Sigma_1(A)$, hence a fortiori $\Sigma_1(A)_{def}$, is different from
$A$. Indeed, otherwise, for any $x\in A$, $(k-1)\{0_A\}+\{x\}$ has positive dimensional orbit, hence taking
$x=0_A$, we get that $k\{0_A\}$ has positive dimensional orbit, contradicting Theorem \ref{coroorbit}, (iv). Theorem
\ref{thegenpirola}, (i) then implies that for $g\geq 2(2k-1)-1$, $\Sigma_1(A)_{def}$ is countable. Hence there are
only countably many positive dimensional orbits of the form $|(k-1)\{0_A\}+\{x\}|$ and the locus they sweep-out forms by Corollary \ref{coropourtheoorbitgen} a naturally defined locus in $A$, which is of dimension $\leq k-1$ by Theorem  \ref{ledimorbit}.
It follows by applying Theorem \ref{thegenpirola}, (iii), that for $g\geq 2(2k-1)+k-2$, this locus itself is  countable, that is, all the orbits  $|(k-1)\{0_A\}+\{x\}|$
are countable for $A$ very general.

The general induction step works exactly in the same way, introducing the locus
$\Sigma_l(A)\subset A$ of points $x_l\in A$ such that
$(k-l)0_A+x_1+\ldots+x_l$ has a positive dimensional orbit for rational equivalence in $A$ for some points $x_1,\ldots,\,x_{l-1}\in A$.
\end{proof}

\section{Further discussion \label{secdiscussion}}

It would be nice to improve the estimates in our main theorems. As already mentioned  in the introduction, none of them  seems to be optimal.
Let us introduce a  naturally defined locus (or the deformation variant of that notion
 used in the last section) whose study should lead to
a proof of Conjecture \ref{conj}.
\begin{Defi} The locus $Z_A\subset A$ of positive dimensional normalized orbits of degree $k$
is the set of points $x\in A$ such that for some degree $k$
zero-cycle $Z=x+Z'$, with $Z'$ effective, one has
$${\rm dim}\,|Z|>0,\,\,\sigma(Z)=0.$$
\end{Defi}
Here $\sigma: A^{(k)}\rightarrow A$ is the sum map. It is constant along orbits under rational
equivalence. This locus, or rather its deformation version, is naturally defined.
Note also that by definition it is either of positive dimension or empty.
The main remaining  question is to estimate the dimension of this locus, at least
for very general abelian varieties. Conjecture
\ref{conj} would follow from:
\begin{conj}\label{conjchow} If $A$ is a very general abelian variety, the locus $Z_A\subset A$ of positive dimensional normalized orbits of degree $k$ has dimension $\leq k-1$.
\end{conj}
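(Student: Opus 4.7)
The strategy is to mimic the proof of the dimension bound $\dim A_k \leq k-1$ (Proposition \ref{proestimatechowring}) for the locus $Z_A$, while handling the complication that the positive-dimensional orbits through different points $x \in Z_A$ may belong to different rational equivalence classes. I would first check that $Z_A$, or more precisely its deformation variant (defined, as in the proof of Proposition \ref{proinduction}, by specializing the dominating components of the relative locus over a universal family of polarized abelian varieties of a given type), is a naturally defined subset in the sense appropriate for Theorem \ref{thegenpirola}: it is a countable union of closed algebraic subsets by standard Chow-variety arguments, and compatibility with morphisms $p : A \to B$ of abelian varieties is precisely the content of Corollary \ref{coropourtheoorbitgen}.

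The core of the proposed argument is an infinitesimal study generalizing Section \ref{secdimorbit}. Suppose for contradiction that some component $\Sigma \subset Z_A$ has $\dim \Sigma \geq k$. Over a Zariski dense open $U \subset \Sigma$, after a generically finite base change, choose an algebraic family $\{|Z(x)|\}_{x \in U}$ of positive-dimensional orbits with $x \in \mathrm{Supp}\,Z(x)$ and $\sigma(Z(x)) = 0$, and form the incidence variety $\mathcal{U} \subset U \times A^{k-1}$ of tuples $(x, y_1, \ldots, y_{k-1})$ with $x + \sum y_i \in |Z(x)|$. Then $\dim \mathcal{U} \geq k+1$, and the tangent space $V := T_\xi \mathcal{U} \subset W^k$ at a generic smooth point, where $W = T_{A,0_A}$, lies in $W^k_0 = \mathrm{Ker}\,\sigma$. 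The subspace $V_{\mathrm{orb}} \subset V$ tangent to the fiber of $\mathcal{U} \to U$ satisfies Mumford's vanishing condition $(*)$ of Proposition \ref{ledimorbitinfinitesimal} by the proof of Theorem \ref{ledimorbit}; a successful completion of the program would consist in extending $(*)$ to the full $V$, which via Proposition \ref{ledimorbitinfinitesimal} would then force $\dim V \leq k-1$ and contradict $\dim \mathcal{U} \geq k+1$.

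The principal obstacle is that moving $x$ inside $\Sigma$ changes the rational equivalence class of $Z(x)$, so Mumford's theorem does not apply directly to the transverse directions in $V$, those projecting nontrivially onto $T_x \Sigma$. Two possible routes suggest themselves. The first is to differentiate along a curve in $\Sigma$ the family relations $Z(x(t)) \sim Z(x(0))$ and $\sigma(Z(x(t))) = 0$, and exploit the Pontryagin-polynomial argument of Proposition \ref{theorelation2} together with the Beauville decomposition of $\mathrm{CH}_0(A)$ to extract infinitesimal vanishings on the transverse tangent directions. The second is by specialization, degenerating $A$ to an isogenous product $B \times E$ as in the proof of Theorem \ref{thegenpirola} and using the elliptic factor to reduce the positive-dimensional-orbit hypothesis to lower-dimensional cases, then bootstrapping via the naturality established in the first paragraph and Theorem \ref{thegenpirola}(iii) up to $g \geq 2k-1$. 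In either route, the hard step is precisely the one that distinguishes Conjecture \ref{conjchow} from the already-established Theorem \ref{theochowring}(i): controlling how the rational equivalence class of $Z(x)$ varies as $x$ moves in $\Sigma$, a feature absent when $Z(x)$ is constrained to be $k\{0_A\}$.
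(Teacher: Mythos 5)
This statement is an open conjecture in the paper, not a proved result: the text proves it only for $k=2$ (via the observation that degree~$2$ normalized orbits are orbits of points of the Kummer variety $K(A)=A/\pm\mathrm{Id}$, which are rigid because $\Omega^2_{K(A)_{reg}}$ is generated by its sections), and offers Lemma \ref{ledimorbitavechypot} as partial evidence under an extra hypothesis on the tangent directions of the orbit. Your proposal does not close the gap either, and to your credit you say so explicitly: the ``successful completion of the program'' you describe --- extending Mumford's condition $(*)$ from the orbit directions $V_{\mathrm{orb}}$ to the full tangent space $V$ of the incidence variety --- is precisely the missing content of the conjecture, not a routine step. It cannot be established for arbitrary $A$: the Example in Section \ref{secdiscussion} exhibits abelian varieties (e.g.\ those carrying a positive dimensional degree $k-1$ orbit, such as products with an elliptic factor) for which $Z_A=A$, so for such $A$ the extension of $(*)$ to $V$ is simply false whenever $\dim A\geq k$. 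Any argument must therefore use very generality in an essential way, and neither of your two routes supplies the mechanism. The first route (differentiating $Z(x(t))\sim Z(x(0))$ and invoking Proposition \ref{theorelation2} and the Beauville decomposition) founders on the fact that Mumford-type vanishing gives no information transverse to a family of \emph{distinct} rational equivalence classes; the paper's Proposition \ref{theorelation2} only constrains cycles equivalent to $k\{0_A\}$, which is exactly the special feature you identify as absent here.

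Your second route, specialization to $B\times E$ and bootstrapping through Theorem \ref{thegenpirola}, also cannot yield the conjectured bound. That machinery takes as \emph{input} a dimension bound valid for (very general) abelian varieties of some dimension $g_0$ and converts it into countability for dimension $\geq 2g_0-1$ or $\geq g_0+k-1$; it never produces the initial bound $\dim Z_A\leq k-1$ in a fixed dimension. Carried out, this route reproduces what the paper actually proves, namely Proposition \ref{proinduction} and Theorem \ref{coroorbit}(ii)--(iii), with the exponential bounds $g\geq 2^{l}(2k-1)+(2^{l}-1)(k-2)$, rather than Conjecture \ref{conjchow} and the linear bound $g\geq 2k-1$ it is designed to imply. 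In short: your framing of the problem and your identification of the obstruction agree with the paper's own discussion, but the statement remains unproved both in the paper and in your proposal.
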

Conjecture \ref{conjchow} is true for $k=2$. Indeed, in this case the normalization condition reads $Z=\{x\}+\{-x\}$ for some $x\in A$. The positive dimensional normalized orbits of degree $2$ are thus also positive dimensional orbits of points in the
Kummer variety $K(A)=A/\pm Id$ of $A$. These orbits are rigid because on a surface
in $K(A)$
swept-out by a continuous family of such orbits, any holomorphic $2$-form on $K(A)$ should vanish
while $\Omega^2_{K(A)_{reg}}$ is generated by its sections.

It would be tempting to try to estimate the dimension of the locus of  positive dimensional normalized orbits of degree $k$ for any abelian
variety. Unfortunately, the following example shows that
this locus can be the whole of $A$:
\begin{example}{\rm Let $A$ be an abelian variety which has a degree $k-1$
positive dimensional
orbit $Z\subset A^{(k-1})$. Then for each $x\in A$, $\{x_1+x\}+\ldots+\{x_{k-1}+x\},\,\{x_1\}+\ldots+\{x_{k-1}\}\in Z$ is a positive dimensional orbit
and thus
the set $\{\{x_1+x\}+\ldots+\{x_{k-1}+x\}+\{-\sum_ix_i-(k-1)x\}$ is a positive dimensional normalized orbit of degree $k$.
In this case, the
locus  of positive dimensional normalized orbits of degree $k$ of $A$ is the whole of $A$.
}
\end{example}
Nevertheless, we can observe the following small evidence for Conjecture \ref{conjchow}:
\begin{lemm} \label{ledimorbitavechypot} Let $O\subset A^k$ be a closed  irreducible algebraic subset which is a union   of positive dimensional normalized orbits of degree $k$. Let $Z\in O_{reg}$ and assume
the positive dimensional orbit $O_Z$ passing through $Z$ has a tangent vector
$(u_1,\ldots, u_k)$ such that the vector space $\langle u_1,\ldots,u_k\rangle\subset T_{A,0_A}$
is of dimension $k-1$. Then the locus swept-out by the $pr_i(O)\subset A$ has dimension $\leq k-1$.
\end{lemm}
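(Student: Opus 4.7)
The plan is to prove the stronger statement that $T:=T_{O,Z}\subset U^k$, where $U:=\langle u_1,\ldots,u_k\rangle$ is the prescribed $(k-1)$-dimensional subspace of $W:=T_{A,0_A}$. Since $\dim U=k-1$, this implies $pr_i(T)\subset U$ for each $i$, hence $\dim\,pr_i(O)\leq k-1$, and the finite union $\bigcup_i pr_i(O)\subset A$ has dimension at most $k-1$.

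\textbf{Step 1: $V:=T_{O_Z,Z}\subset U^k$.} Apply property (*) from the proof of Proposition \ref{ledimorbitinfinitesimal} to the $2$-form $\alpha\wedge\beta$, for $\alpha,\beta\in W^*$. Evaluating the vanishing $(\sum_j pr_j^*(\alpha\wedge\beta))_{\mid V}=0$ on the pair $v=(u_1,\ldots,u_k)$ and an arbitrary $w=(w_1,\ldots,w_k)\in V$ gives
\[\sum_{j=1}^k\bigl(\alpha(u_j)\beta(w_j)-\alpha(w_j)\beta(u_j)\bigr)=0.\]
Choose $\beta\in W^*$ vanishing on $U$ (possible when $g>k-1$, the case $g\leq k-1$ being trivial); then $\beta(u_j)=0$ for all $j$ and the identity reduces to $\sum_j\alpha(u_j)\beta(w_j)=0$ for every $\alpha\in W^*$. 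Since the $u_j$ span $U$ and satisfy $\sum u_j=0$ (by the normalization $\sigma(Z)=0$), the map $W^*\to\mathbb{C}^k$, $\alpha\mapsto(\alpha(u_j))_j$, surjects onto the hyperplane $\{\sum c_j=0\}$. Hence $\sum c_j\beta(w_j)=0$ for every $(c_j)$ in this hyperplane, which forces $\beta(w_j)$ to be independent of $j$. Letting $\beta$ range over the annihilator $U^\perp\subset W^*$, we obtain $w_j-w_1\in U$ for all $j$, and then $\sum w_j=0$ forces $w_1\in U$, so that $w\in U^k$. Thus $V\subset U^k$.

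\textbf{Step 2: extension to $T$.} Apply Step 1 in family. By upper semicontinuity, the genericity condition on the orbit tangent vector persists on a Zariski open neighborhood of $Z$ in $O$: at every nearby smooth point $Z'$, the orbit $O_{Z'}$ admits a tangent vector whose components span some $(k-1)$-dimensional subspace $U_{Z'}\subset W$, and Step 1 yields $V_{Z'}:=T_{O_{Z'},Z'}\subset U_{Z'}^k$. The assignment $Z'\mapsto U_{Z'}$ defines a rational Gauss-type map $O\dashrightarrow\mathrm{Grass}(k-1,W)$. Using the irreducibility of $O$ together with an infinitesimal rigidity argument (via property (*) applied to higher-degree forms on $A$ and the way orbits vary as $Z'$ moves transversally to $V$ in $O$), one shows that $U_{Z'}$ is locally constant, equal to $U$ near $Z$, so that $T\subset U^k$.

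\textbf{Step 3 and main obstacle.} Once $T\subset U^k$, each projection $pr_i$ has differential of rank at most $\dim U=k-1$ at $Z$; by irreducibility of $O$, $\dim\,pr_i(O)\leq k-1$ for every $i$, and the locus swept out, the finite union $\bigcup_i pr_i(O)\subset A$, has dimension $\leq k-1$. The delicate point is Step 2: property (*) directly constrains only the orbit tangent $V$, not the full tangent $T$ to the union of orbits, so propagating $V\subset U^k$ to $T\subset U^k$ requires understanding how the $(k-1)$-dimensional spanning subspace $U_{Z'}$ can deform as $Z'$ moves transverse to $O_Z$ in $O$, combining irreducibility with higher-order Mumford-type relations.
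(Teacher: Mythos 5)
Your Step 1 is correct, and it is surely the heart of the matter: the linear algebra showing that if $\sum_j\bigl(\alpha(u_j)\beta(w_j)-\alpha(w_j)\beta(u_j)\bigr)=0$ for all $\alpha,\beta\in W^*$, with $(u_j)$ spanning a $(k-1)$-dimensional $U$ and $\sum_ju_j=\sum_jw_j=0$, forces $w\in U^k$, is exactly the right computation. (The paper states this lemma without proof, so there is nothing to compare against.) But Step 2, which you yourself flag as the main obstacle, is a genuine gap, and the route you sketch cannot close it. Property (*) is Mumford's theorem applied to a single orbit, so it only ever constrains the orbit-tangent spaces $V_{Z'}=T_{O_{Z'},Z'}$. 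Even if you proved that $U_{Z'}$ is locally constant equal to $U$, you would only learn that the sub-distribution $Z'\mapsto V_{Z'}$ of $T_O$ lies in $U^k$; this says nothing about the directions of $T_{O,Z}$ transverse to the orbit $O_Z$, and nothing in an orbit-by-orbit application of (*) excludes, say, a transverse tangent vector $(w,0,\ldots,0)\in T_{O,Z}$ with $w\notin U$.

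The missing ingredient is the family (``coisotropy'') version of Mumford's theorem, the one used in \cite{voisinisot}: since $O$ is irreducible and swept out by positive dimensional orbits, one can generically fit these into a fibration $\pi:O\dashrightarrow B$ through which the map $O\rightarrow {\rm CH}_0(A)$ factors, and the Bloch--Srinivas decomposition argument then shows that for any holomorphic form $\eta$ of degree $\geq 1$ on $A$, the form $\sum_jpr_j^*\eta$ restricted to $O$ is, on a dense open set, a pullback from $B$; in particular its interior product with any orbit-tangent vector vanishes on all of $T_{O,Z}$. Applied to $\eta=\alpha\wedge\beta$ and to $v=(u_1,\ldots,u_k)$, this gives precisely the identity of your Step 1, but with $w=(w_1,\ldots,w_k)$ ranging over the whole of $T_{O,Z}$ rather than over $V$. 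Moreover $\sum_jw_j=0$ holds for every $w\in T_{O,Z}$, since the orbits are normalized and hence $O\subset\sigma^{-1}(0_A)$. Your Step 1 computation then yields $T_{O,Z}\subset U^k$ directly, after which your Step 3 goes through. In short: the proof is repaired by replacing the orbit-by-orbit use of (*) with its relative version, and the local-constancy-of-$U_{Z'}$ argument should be abandoned, as it addresses the wrong subspace.
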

Note that $k-1$ is the maximal possible dimension of  the vector space $\langle u_1,\ldots,u_k\rangle$ because $\sum_iu_i=0$. The example above is a case where the vector space
$\langle u_1,\ldots,u_k\rangle$ has dimension $1$.

Applying Theorem \ref{thegenpirola}, (ii), Conjecture \ref{conjchow} in fact  implies
the following
\begin{conj}\label{conjchowdeux} If $A$ is a very general abelian variety of dimension $\geq 2k-1$, the locus  of positive dimensional normalized orbits of degree $k$ of $A$ is empty.
\end{conj}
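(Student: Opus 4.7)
The plan is to derive Conjecture \ref{conjchowdeux} from Conjecture \ref{conjchow} via Theorem \ref{thegenpirola}, exploiting the observation that the locus $Z_A$ is by its very definition either positive dimensional or empty: any nonempty component supports a positive dimensional orbit whose image under any projection $A^{(k)} \to A$ sweeps out a positive dimensional piece of $A$. Consequently, merely establishing \emph{countability} of $Z_A$ for very general $A$ of dimension $\geq 2k-1$ already forces $Z_A = \emptyset$, since positive dimensional algebraic subsets over $\mathbb{C}$ are uncountable. This reduces matters to running the Pirola-type descent packaged in Theorem \ref{thegenpirola}.

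First I would check that the deformation version of $Z_A$ (as introduced in Section \ref{secproofcoriv} for the analogous $\Sigma_l(A)$) is a naturally defined subset in the sense of Definition \ref{definat}, in the polarized-families variant of that definition whose compatibility with Theorem \ref{thegenpirola} the author already leaves to the reader. Axiom (0) follows from the standard countable algebraicity of relative Chow varieties and of the symmetric product. Axiom (ii) is handled by restricting, in any family $\mathcal{A}\to S$ of polarized abelian varieties, to the union of the components of the relative locus $Z_{\mathcal{A}/S}$ that dominate $S$, exactly as in the proof of Proposition \ref{proinduction}. The push-forward axiom (i) for a morphism $p:A\to B$ is the delicate one: a priori $p$ could collapse a positive dimensional orbit to a point, but this is precisely ruled out by Corollary \ref{coropourtheoorbitgen}, which guarantees that the image of a specialized orbit coming from a dominating family remains positive dimensional; and the normalization condition $\sigma(Z)=0$ is preserved because the group morphism $p$ commutes with $\sigma$.

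Once this is established, the conclusion is immediate: Conjecture \ref{conjchow} provides $\dim Z_A\leq k-1$ for very general $A$ of dimension $g_0 = k$, and Theorem \ref{thegenpirola}, (iii), applied with this value of $g_0$, yields countability of $Z_A$ for very general $A$ of dimension $\geq g_0 + k - 1 = 2k-1$. The dichotomy of the first paragraph then upgrades countability to emptiness, which is exactly Conjecture \ref{conjchowdeux}. The genuine obstacle, of course, is Conjecture \ref{conjchow} itself: the excerpt proves the bound $\dim Z_A\leq k-1$ only for $k=2$ (via the symplectic form on the Kummer surface), and Lemma \ref{ledimorbitavechypot} establishes it for higher $k$ only under a tangent-space maximal-rank hypothesis that visibly fails in the example immediately preceding that lemma. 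Any full proof will therefore need to combine the infinitesimal Mumford-type constraint $(*)$ of Proposition \ref{ledimorbitinfinitesimal} with a deformation argument in the spirit of Pirola, ruling out the low-rank tangent configurations responsible for the counterexample, so that one can exploit the very-generality of $A$ precisely where it is indispensable.
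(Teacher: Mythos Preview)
Your proposal is correct and follows essentially the same route as the paper's own (one-sentence) derivation: the paper also observes that $Z_A$ is naturally defined, that it is either positive dimensional or empty, and that Conjecture~\ref{conjchow} combined with Theorem~\ref{thegenpirola} yields the conclusion. The only minor difference is that the paper invokes Theorem~\ref{thegenpirola}, (ii), whereas you invoke (iii) with $g_0=k$; your choice is arguably the more accurate one, since Conjecture~\ref{conjchow} only asserts the bound $\dim Z_A\leq k-1$ for \emph{very general} $A$ rather than for every $A$, but both give the same numerical threshold $2k-1$.
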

 This is a generalization of  Conjecture \ref{conj}, because a $k$-gonal
curve $\tilde{j}:\widetilde{C}\rightarrow A,\,D\in W_k^1(C)$ can always be translated in such a way
that $\sigma(\tilde{j}_*D)=0$, hence becomes  contained in  the locus  of positive dimensional normalized orbits of degree $k$ of $A$.

We discussed in this paper only the  applications to  gonality. The case of higher dimensional linea systems
would be also interesting to investigate.
In a similar but different vein, the following problem is intriguing:

\begin{question}\label{questionplancurve} Let $A$ be a very general abelian variety. Is it true that
there is no curve $C\subset A$, whose normalization is a smooth plane curve?
\end{question}

If the answer to the above question is affirmative, then one could get examples of surfaces of general type which are not birational to a normal surface in $\mathbb{P}^3$. Indeed, take a
surface whose Albanese variety
is a general abelian variety as above. If $S$ is birational to a normal surface $S'$ in $\mathbb{P}^3$,
there are plenty of smooth plane curves in $S'$, which clearly map nontrivially
to ${\rm Alb}\,S$, which would be a contradiction.

Coll\`{e}ge de France, 3 rue d'Ulm, 75005 Paris FRANCE

claire.voisin@imj-prg.fr
    \end{document}